\def\beq{\begin{equation}}
\def\eeq{\end{equation}}
\def\ba{\begin{array}}
\def\ea{\end{array}}
\def\R{\mathbb R}
\def\cal{\mathcal}
\def \ds{\displaystyle}
\newcommand{\bdy}{\partial}
\newcommand{\Grad}{\nabla}
\newcommand{\norm}[1]{\left\|#1\right\|}
\newcommand{\abs}[1]{\left|#1\right|}
\newcommand{\lb}{\left\langle}
\newcommand{\rb}{\right\rangle}
\newcommand{\bb}{\mathbb}
\newcommand{\weakconv}{\rightharpoonup}
\renewcommand{\d}{{\rm d}}
\DeclareMathOperator{\supp}{supp}
\newtheorem{thm}{Theorem}[section]
\newtheorem{OldTheorem}{Theorem}
\newtheorem{lem}[thm]{Lemma}
\newtheorem{prop}[thm]{Proposition}
\newtheorem{crl}[thm]{Corollary}
\newtheorem{claim}[thm]{Claim}
\theoremstyle{definition}
\theoremstyle{remark}
\numberwithin{equation}{section}
\begin{document}
\allowdisplaybreaks[3]
\pagestyle{plain}   \today
\title{An extension operator on bounded domains and applications}
\subjclass[2010]{35Jxx, 45Gxx, 53-xx}
\keywords{Hardy-Littlewood-Sobolev inequality, isoperimetric inequality}
\author{Mathew Gluck}
\address{Mathew Gluck, Department of Mathematics,
The University of Oklahoma, Norman, OK 73019, USA}
\email{mgluck@math.ou.edu}

\author  {Meijun Zhu}
\address{ Meijun Zhu, Department of Mathematics,
The University of Oklahoma, Norman, OK 73019, USA}
\email{mzhu@ou.edu}

\begin{abstract}
In this paper we study a sharp Hardy-Littlewood-Sobolev (HLS) type inequality with Riesz potential on bounded smooth domains.  We obtain the inequality for a general bounded domain $\Omega$ and show that if the extension constant for $\Omega$ is strictly larger than the extension constant for the unit ball $B_1$  then extremal functions exist. Using suitable test functions we show that this criterion is satisfied by an annular domain whose hole is sufficiently small. The construction of the test functions is not based on any positive mass type theorems, neither on the nonflatness of the boundary. By using a similar choice of test functions with the Poisson-kernel-based extension operator we prove the existence of an abstract domain having zero scalar curvature and strictly larger isoperimetric constant than that of the Euclidean ball.
\end{abstract}

 \maketitle
\section{Introduction}
The classical Hardy-Littlewood Sobolev (HLS) inequality \cite{HL1928, HL1930, Sobolev1938, Lieb1983} states that if $n\geq 1$, $0< \alpha< n$ and $1< p, t< \infty$ satisfy $ \frac 1 p + \frac 1 t + \frac{n-\alpha}{n} = 2$ then there is a sharp constant $\cal N(n, \alpha, p)$ such that 
\begin{equation*}
	\abs{
	\int_{\bb R^n}\int_{\bb R^n} \frac{f(y)g(x)}{\abs{x - y}^{n - \alpha}}\; \d x \; \d y
	}
	\leq
	\cal N(n, \alpha, p)\norm{f}_{L^p(\bb R^n)}\norm{g}_{L^t(\bb R^n)}
\end{equation*}
for all $f\in L^p(\bb R^n)$ and all $g\in L^t(\bb R^n)$. In the diagonal case that $p = t = \frac{2n}{n + \alpha}$ Lieb \cite{Lieb1983} computed the the extremal functions and the value of the optimal constant $\cal N(n, \alpha, 2n/(n + \alpha))$. The sharp HLS inequality has implications throughout many subfields of mathematics. For example, the sharp HLS inequality implies the sharp Sobolev inequality, the Moser-Trudinger-Onofri and Beckner inequalities \cite{Beckner1993} as well as Gross's logarithmic Sobolev inequality \cite{Gross1976}. These inequalities play prominent roles in analysis and in geometric problems including the Yamabe problem and Ricci flow problems. \\

In recent years numerous extensions and generalizations of the classical HLS inequality have been realized, many of which have implications in other areas of mathematics. Some examples of such extensions are weighted HLS inequalities and Frank and Lieb's \cite{FrankLieb2012} sharp HLS inequality on the Heisenberg group. Another example is the reversed HLS inequality of Dou and Zhu \cite{DZ2} (see also \cite{NgoNguyen2017}) which applies to the case where the differential order exceeds the dimension.


Another direction for extending the classical HLS inequality is to prove HLS inequalities for manifolds with boundary. Progress in this direction was made by Dou and Zhu in \cite{DZ1} where a HLS-type inequality was proved on the upper half space $\R^n_+=\{x = (x_1, \ldots, x_n)\in\R^n: x_n>0 \}$. They proved 
\begin{OldTheorem}
\label{theorem:UpperHalfSpaceHLStype}
Let $n\geq 3$ and $1<\alpha<n$. For every $p$, $t$ satisfying both $1<p,t<\infty$ and 
\begin{equation}
\label{eq:UHSptCritical}
	\frac{n-1}{np}+\frac{1}{t}+\frac{n-\alpha+1}{n}=2
\end{equation}
there is a sharp constant $\cal C_\alpha (n,p)$ such that  for all $f\in L^p(\partial \bb R^n_+)$ and $g\in L^t(\bb R^n_+)$,
\begin{equation}\label{1-1}
   \int_{ \bb R^n_+}  \int_{\partial \bb R^n_+}\frac{f(y)g(x)}{\abs{x-y}^{n-\alpha}} \d y \; \d x 
   \le 
   \cal C_\alpha (n,p)\|f\|_{L^p(\partial \bb R^n_+)} \|g\|_{ L^t(\bb R^n_+)}.
\end{equation}
\end{OldTheorem}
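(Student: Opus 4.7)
The plan is to establish \eqref{1-1} by duality combined with a slicing argument that reduces to a weighted HLS estimate on $\mathbb{R}^{n-1}$, and then to identify the sharp constant and its extremals by exploiting conformal invariance of the half-space.

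By $L^t$--$L^{t'}$ duality, inequality \eqref{1-1} is equivalent to the boundedness of the extension operator
\[
	E_\alpha f(x) := \int_{\partial \mathbb{R}^n_+} \frac{f(y)}{|x - y|^{n - \alpha}}\, dy, \qquad x \in \mathbb{R}^n_+,
\]
as a map $L^p(\partial \mathbb{R}^n_+) \to L^{t'}(\mathbb{R}^n_+)$, where $1/t + 1/t' = 1$. A direct dilation computation confirms that \eqref{eq:UHSptCritical} is exactly the scale-invariance condition for $E_\alpha$, so it is necessary; the hypothesis $\alpha > 1$ ensures $n - \alpha < n - 1$, so the kernel is locally integrable on the boundary for each $x \in \mathbb{R}^n_+$.

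To prove the (non-sharp) boundedness I would decompose $x = (x', x_n) \in \mathbb{R}^{n-1} \times (0, \infty)$ and write $E_\alpha f(x', x_n) = (f * K_{x_n})(x')$ with $K_{x_n}(z) = (|z|^2 + x_n^2)^{-(n-\alpha)/2}$, a radially decreasing kernel on $\mathbb{R}^{n-1}$. A direct computation yields $\|K_{x_n}\|_{L^r(\mathbb{R}^{n-1})} \simeq x_n^{(n-1)/r - (n-\alpha)}$ for $r > (n-1)/(n-\alpha)$, and combining Young's convolution inequality on each slice with Minkowski's integral inequality in $x_n$ --- or equivalently invoking a weighted Stein--Weiss HLS inequality on $\mathbb{R}^n$ applied to the boundary measure $f\, d\sigma$ --- then delivers the $L^{t'}(\mathbb{R}^n_+)$ bound, with the exponent balance enforced by \eqref{eq:UHSptCritical}.

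To identify $\mathcal{C}_\alpha(n, p)$ and show existence of extremals, I would pass to a compact setting via the Möbius transformation $\Phi: \mathbb{R}^n_+ \to B_1$, which preserves the kernel $|x-y|^{-(n-\alpha)}$ up to explicit conformal weights, recasting the variational problem on a compact geometry. A maximizing sequence, after Schwarz-symmetrizing horizontally in the $x'$-variable and then normalizing by the translation--dilation subgroup of the conformal group of $\mathbb{R}^n_+$, can then be shown to be precompact. The main obstacle is precisely this concentration-compactness step: ruling out loss of mass along the non-compact conformal symmetries, where a blow-up profile could escape either into the interior of the half-space or along its boundary. Finally, any extremal pair satisfies an integral Euler--Lagrange system on $(\partial \mathbb{R}^n_+, \mathbb{R}^n_+)$, to which the method of moving spheres applies to classify positive solutions as an explicit Möbius-parametrized family of radial functions; substituting these back into \eqref{1-1} yields $\mathcal{C}_\alpha(n, p)$ as a computable Beta--Gamma expression.
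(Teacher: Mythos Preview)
The paper does not prove Theorem~A at all: it is quoted as a result of Dou and Zhu \cite{DZ1}, introduced with ``They proved'', and is used throughout as a black box (e.g.\ in Lemma~\ref{epsilon_lemma}, Proposition~\ref{prop:BallBestExtensionConstant}, and the proof of Lemma~\ref{lemma:SubcriticalExtremalsBounded}). So there is no ``paper's own proof'' to compare your proposal against. The only hint the paper gives about the method is the remark that the non-sharp inequality follows from ``the classical argument using Young's inequality and the Marcinkiewicz Interpolation Theorem''.

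On the merits of your outline, two comments. First, your proposed route to the non-sharp bound via ``Young on each slice $+$ Minkowski in $x_n$'' does not close as stated: with $\|K_{x_n}\|_{L^r(\mathbb R^{n-1})}\simeq x_n^{(n-1)/r-(n-\alpha)}$, the resulting integral $\int_0^\infty x_n^{[(n-1)/r-(n-\alpha)]t'}\,dx_n$ is never finite (it is homogeneous of a fixed degree, reflecting the scale invariance you yourself note). One actually needs Marcinkiewicz interpolation between weak-type endpoints, or a Hedberg-type maximal-function argument; your fallback ``or equivalently a weighted Stein--Weiss inequality applied to $f\,d\sigma$'' is closer to something that works, but is not a substitute for the interpolation step. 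Second, your plan for the sharp constant --- conformal equivalence with the ball, symmetrization plus concentration-compactness for existence, and moving spheres on the Euler--Lagrange system for classification --- matches the strategy in \cite{DZ1}, though note that the explicit value of $\mathcal C_\alpha(n,p)$ and the extremals are obtained there only in the diagonal case $\alpha=2$, $p=2(n-1)/n$ (cf.\ \eqref{best_c}--\eqref{buble}), not for general $\alpha,p$ as your last sentence suggests.
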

For the conformal exponents (i.e when $p=2(n-1)/(n+\alpha-2)$) and when $\alpha =2$, the sharp constant in Theorem \ref{theorem:UpperHalfSpaceHLStype} was computed in \cite{DZ1} and is given by
\begin{equation}\label{best_c}
	\cal C_2\left(n, \frac{2(n-1)}n\right) 
	= 
	n^{\frac{n-2}{2(n-1)}}\omega_{n}^{1-\frac1n-\frac{1}{2(n-1)}}. 
\end{equation}
Moreover, in \cite{DZ1} the extremal functions corresponding to $\cal C_2(n, 2(n-1)/n)$ were classified and are given up to a positive constant multiple and a translation by $y^0\in \bdy \bb R_+^n$ by
\begin{equation}\label{buble}
	f_{\epsilon}(y)
	=
	\left(\frac\epsilon{\epsilon^2+\abs{y}^2}\right)^{\frac{n}2};   
	\qquad
	g_{\epsilon}(x)
	=
	\left(\frac\epsilon{(x_n + \epsilon)^2+\abs{x'}^2}\right)^{\frac{n+2}2},
\end{equation}
where $\epsilon>0$ and $x' = (x_1, \cdots, x_{n - 1}, 0)\in \bdy \bb R_+^n$. Theorem \ref{theorem:UpperHalfSpaceHLStype} is equivalent to the boundedness from $L^p(\bdy \bb R_+^n)$ to $L^{t'}(\bb R_+^n)$ ($t'$ is the Lebesgue conjugate exponent corresponding to $t$) of the extension operator $\tilde E_\alpha$ given by
\begin{equation}
\label{eq:UHSExtensionOperator}
	\tilde E_\alpha f(x) = \int_{\partial \bb R_+^n}\frac{f(y)}{\abs{x - y}^{n - \alpha}}\; \d y. 
\end{equation}
In particular, $\norm{\tilde E_\alpha f}_{L^{t'}(\bb R_+^n)}\leq \cal C_\alpha(n, p)\norm{f}_{L^p(\bdy \bb R_+^n)}$ and the constant $\cal C_\alpha(n, p)$ is sharp.  When $\alpha =2$ and $p = \frac{2(n-1)}{n}$, the extremal $f$'s in this inequality are as in \eqref{buble}. In view of the conformal equivalence of the upper half-space and the unit ball $B_1\subset \bb R^n$, the extension operator 
\begin{equation*}
	E_{2, B_1} f(x) = \int_{\bdy B_1}\frac{f(y)}{\abs{x - y}^{n - 2}}\; \d S_y
\end{equation*}
automatically satisfies the embedding inequality $\norm{E_{2, B_1}f}_{L^{2n/(n-2)}(B_1)} \leq \cal C_2(n,2(n-1)/n)\norm{f}_{L^{2(n-1)/n}(\bdy B_1)}$ and the constant $\cal C_2(n, 2(n-1)/n)$ in this inequality is sharp.\\

In this work, we will investigate the extension of the HLS-type inequality on the upper half-space (Theorem \ref{theorem:UpperHalfSpaceHLStype}) to bounded subdomains $\Omega\subset\bb R^n$ having smooth boundaries. Let $n\geq 3$ and let $\Omega$ be a bounded subdomain of $\mathbb{R}^n$. For $\alpha \in (1, n)$, the following extension operator was introduced in Dou and Zhu \cite{DZ1}:
\begin{equation*}
\label{Operator-main-term}
   E_{\alpha, \Omega}f(x)=  E_\alpha f(x)=\int_{\partial \Omega}\frac{f(y)}{|x-y|^{n-\alpha}} \d y
   \qquad
   \text{ for }x\in \Omega. 
\end{equation*}
Based on the classical argument using Young's inequality and the Marcinkiewicz Interpolation Theorem, one can prove the existence of a constant $C(n, \Omega)>0$ such that
\begin{equation}
\label{eq:E2Bounded}
	\|E_2f\|_{L^{\frac{2n}{n-2}}(\Omega)} 
	\leq
	C(n, \Omega)\norm{f}_{L^{\frac{2(n-1)}n}(\partial \Omega)}
\end{equation}
for every $f\in L^{2(n-1)/n}(\bdy\Omega)$. A similar approach was taken by Dou and Zhu in \cite{DZ1} to establish Theorem \ref{theorem:UpperHalfSpaceHLStype}. In Section \ref{section:OmegaExtensionRestrictionInequalities} we will show that inequality \eqref{eq:E2Bounded} is a consequence of Theorem \ref{theorem:UpperHalfSpaceHLStype}. We will also investigate the sharp constant in inequality \eqref{eq:E2Bounded}. Define the extension constant for $\Omega$ by
\begin{equation}
\label{eq:min}
	\cal E_2 (\Omega)
	=
	\sup\{ \cal J_2(f): f\in L^{2(n-1)/n}(\bdy\Omega)\setminus \{0\}\},
\end{equation}
where
\begin{equation}\label{min-1}
\cal J_2(f)=\frac { \norm{E_2f}_{L^{\frac{2n}{n-2}}(\Omega)} }  {\norm{f}_{L^{\frac{2(n-1)}n}(\partial \Omega)}}.
\end{equation} 
In particular, in this notation we have $\cal E_2(B_1) = \cal C_2(n, 2(n-1)/n)$. The main questions we plan to address are
\begin{enumerate}
	\item[{\bf Q1:}] What is $\cal E_2 (\Omega)$ for a given domain $\Omega$? 
	\item[{\bf Q2:}] For which $\Omega$ is the supremum in the definition of $\cal E_2 (\Omega)$ achieved? 
\end{enumerate}
A partial answer to {\bf Q1} is given in the following proposition where we obtain a lower bound for $\cal E_2(\Omega)$. 
\begin{prop}
\label{prop:BallBestExtensionConstant}
Let $n\geq 3$. If $\Omega\subset \bb R^n$ is a bounded smooth domain then 
\begin{equation*}
	\cal E_2 (\Omega) \ge  \cal E_2 (B_1)=n^{\frac{n-2}{2(n-1)}}\omega_{n}^{1-\frac1n-\frac{1}{2(n-1)}}.
\end{equation*}
\end{prop}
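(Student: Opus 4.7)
The key fact is that $\mathcal{E}_2(B_1)$ coincides with the sharp constant $\mathcal{C}_2(n, 2(n-1)/n)$ of the upper-half-space HLS inequality in Theorem~\ref{theorem:UpperHalfSpaceHLStype}, attained by the bubbles $(f_\epsilon, g_\epsilon)$ in \eqref{buble}. Since the boundary of any smooth domain is locally asymptotically flat, the plan is to build a family of test functions on $\partial\Omega$ by localizing and rescaling the half-space bubble at a chosen boundary point of $\Omega$, and to show that $\mathcal{J}_2$ along this family tends to $\mathcal{E}_2(B_1)$ in the concentration limit. Because $\mathcal{E}_2(\Omega)$ is defined as the supremum of $\mathcal{J}_2$ over nonzero $f$, this will give $\mathcal{E}_2(\Omega) \ge \mathcal{E}_2(B_1)$.

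For the setup, I fix $y_0 \in \partial\Omega$ and, after a rigid motion, assume $y_0 = 0$ with the interior unit normal at $y_0$ equal to $e_n$; smoothness of $\partial\Omega$ produces $r > 0$ and $\psi \in C^\infty(B'_r(0))$ (the ball lives in $\R^{n-1}$) with $\psi(0) = 0$ and $\Grad\psi(0) = 0$ such that $\Omega \cap B_r(0) = \{(y', y_n) : y_n > \psi(y')\}$. Pick a cutoff $\eta \in C_c^\infty(B_r(0))$ with $\eta \equiv 1$ on $B_{r/2}(0)$, and for $\epsilon > 0$ take
$$ f_\epsilon(y) = \eta(y)\Bigl(\tfrac{\epsilon}{\epsilon^2 + |y|^2}\Bigr)^{n/2}, \qquad y \in \partial\Omega. $$

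Next I would perform a blow-up at $y_0$. Using the parameterization $y' \mapsto (y', \psi(y'))$ and the substitution $y' = \epsilon z'$, the area element expands as $\sqrt{1 + |\Grad\psi(\epsilon z')|^2}\, \d y' = (1 + O(\epsilon^2|z'|^2))\,\epsilon^{n-1}\, \d z'$ and the contribution from $\partial\Omega \setminus B_{r/2}(0)$ is $O(\epsilon^{n-1})$; hence $\|f_\epsilon\|_{L^{2(n-1)/n}(\partial\Omega)}^{2(n-1)/n} \to \int_{\R^{n-1}}(1+|z'|^2)^{-(n-1)}\,\d z'$. A parallel computation for the numerator (with $x = \epsilon w$ and $f_1^*(z) := (1+|z|^2)^{-n/2}$) gives $E_2 f_\epsilon(\epsilon w) = \epsilon^{-(n-2)/2}\bigl(\tilde E_2 f_1^*(w) + o(1)\bigr)$ uniformly on compact subsets of $\R^n_+$, and because the rescaled domain $\epsilon^{-1}\Omega$ exhausts $\R^n_+$ monotonically, dominated convergence yields $\|E_2 f_\epsilon\|_{L^{2n/(n-2)}(\Omega)} \to \|\tilde E_2 f_1^*\|_{L^{2n/(n-2)}(\R^n_+)}$. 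Dividing and invoking Theorem~\ref{theorem:UpperHalfSpaceHLStype} at the extremal $f_1^*$ gives $\lim_{\epsilon\to 0}\mathcal{J}_2(f_\epsilon) = \mathcal{C}_2(n, 2(n-1)/n)$, so \eqref{eq:min} closes the argument.

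The main technical point is the error bookkeeping in the previous step: one must verify that the boundary curvature (the terms arising from $\psi \neq 0$ away from the origin), the cutoff $\eta$, and the bounded-versus-unbounded discrepancy between $\epsilon^{-1}\Omega$ and $\R^n_+$ all contribute only vanishing perturbations to the source norm on $\partial\Omega$ and the extension norm on $\Omega$. All three issues are controlled by the concentration of $f_\epsilon$ at scale $\epsilon$ near $y_0$, combined with the decays $f_\epsilon(y) = O(|y|^{-n})$ and $E_2 f_\epsilon(x) = O(|x|^{-(n-2)})$ for $|y|,|x| \gtrsim r$, which render the contributions from the non-flat region and the bounded-domain tail polynomially small in $\epsilon$.
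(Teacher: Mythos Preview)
Your proposal is correct and follows essentially the same strategy as the paper: transplant the half-space extremal bubble to a boundary point of $\Omega$ and show that the resulting test functions recover $\mathcal{E}_2(B_1)$ in the concentration limit. The paper carries this out in the dual (bilinear) formulation, pairing $f_\epsilon$ with $g_\epsilon$ and pulling back through a boundary chart with $(1+\delta)$-Lipschitz control, whereas you compute $\mathcal{J}_2(f_\epsilon)$ directly in graph coordinates; note that for the inequality you only need $\liminf_{\epsilon\to 0}\|E_2 f_\epsilon\|_{L^{2^*}(\Omega)}\ge \|\tilde E_2 f_1^*\|_{L^{2^*}(\R^n_+)}$, so Fatou suffices and you can avoid the dominated-convergence/monotone-exhaustion claim, which is not literally true as stated.
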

In a similar spirit to the resolution of the Yamabe-type problem \cite{Trudinger1968, Aubin1976, Schoen1984, LP1987}, we show that if $\Omega$ is a domain for which strict inequality holds in Proposition \ref{prop:BallBestExtensionConstant} then the supremum in the definition of $\cal E_2(\Omega)$ is achieved. 
\begin{thm}
\label{theorem:SupremumCriterion}
Let $n\geq 3$. If $\Omega \subset \bb R^n$ is a smooth bounded domain for which 
\begin{equation}\label{cri}
	\cal E_2 (\Omega) > \cal E_2 (B_1),
\end{equation}
then there is a nonnegative function $f\in C^0(\bdy \Omega)$ for which $\cal J_2(f) = \cal E_2(\Omega)$. 
\end{thm}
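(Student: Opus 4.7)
The plan is to carry out a concentration-compactness argument in the spirit of Aubin and Lions, using hypothesis \eqref{cri} to rule out mass concentration at boundary points of a maximizing sequence. Set $p = 2(n-1)/n$ and $q = 2n/(n-2)$, and note that $q/p = n^2/((n-1)(n-2)) > 1$. Take a maximizing sequence $\{f_k\}\subset L^p(\partial\Omega)$ with $f_k\ge 0$ (replacing $f_k$ by $|f_k|$ only increases $\mathcal{J}_2$), $\|f_k\|_{L^p(\partial\Omega)}=1$, and $\|E_2 f_k\|_{L^q(\Omega)}\to\mathcal{E}_2(\Omega)$. By reflexivity, pass to a subsequence with $f_k \rightharpoonup f$ in $L^p(\partial\Omega)$. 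For each $x\in\Omega$, the kernel $y\mapsto|x-y|^{2-n}$ is bounded on $\partial\Omega$ (it lies in $L^{p'}(\partial\Omega)$), so weak convergence gives $E_2f_k(x)\to E_2 f(x)$ pointwise throughout $\Omega$. Together with the uniform $L^q$ bound, the Brezis-Lieb lemma yields
\begin{align*}
1 &= \|f\|_{L^p(\partial\Omega)}^p + \lim_{k\to\infty}\|f_k-f\|_{L^p(\partial\Omega)}^p, \\
\mathcal{E}_2(\Omega)^q &= \|E_2 f\|_{L^q(\Omega)}^q + \lim_{k\to\infty}\|E_2(f_k-f)\|_{L^q(\Omega)}^q.
\end{align*}
Write $h_k = f_k - f$, $a = \|f\|_{L^p(\partial\Omega)}^p$ and $b = \lim_k\|h_k\|_{L^p(\partial\Omega)}^p$, so that $a + b = 1$.

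The main technical step is the concentration bound
\begin{equation*}
\limsup_{k\to\infty}\|E_2 h_k\|_{L^q(\Omega)}^q \;\le\; \mathcal{E}_2(B_1)^q\, b^{q/p}.
\end{equation*}
To prove this, observe first that since $h_k\rightharpoonup 0$ and the kernel is uniformly bounded on $K\times\partial\Omega$ for any compact $K\subset\Omega$, the family $\{|x-y|^{2-n}: x\in K\}$ is compact in $L^{p'}(\partial\Omega)$, hence $E_2 h_k \to 0$ uniformly on $K$. Thus the $L^q$-mass of $E_2 h_k$ concentrates only in a shrinking neighborhood of $\partial\Omega$. A concentration-compactness decomposition of $|h_k|^p$ yields at most countably many boundary points $y_1,y_2,\ldots\in\partial\Omega$ and masses $\mu_j\ge 0$ with $\sum_j\mu_j\le b$. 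Near each $y_j$, a smooth diffeomorphism flattening $\partial\Omega$ reduces the local problem to the half-space problem up to errors that vanish with the scale of concentration; Theorem \ref{theorem:UpperHalfSpaceHLStype} with $\alpha=2$ then bounds the contribution of each concentration by $\mathcal{E}_2(B_1)^q\mu_j^{q/p}$. Summing and using $\sum_j\mu_j^{q/p}\le b^{q/p-1}\sum_j\mu_j \le b^{q/p}$ (valid since $q/p>1$ and $\mu_j\le b$) gives the claim.

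Granting this bound, the rest is arithmetic. Substituting into the Brezis-Lieb identity for $E_2 f_k$ and using $\|E_2 f\|_{L^q(\Omega)}\le\mathcal{E}_2(\Omega)a^{1/p}$, division by $\mathcal{E}_2(\Omega)^q$ gives
\begin{equation*}
1 \;\le\; a^{q/p} + \theta\, b^{q/p}, \qquad \theta := (\mathcal{E}_2(B_1)/\mathcal{E}_2(\Omega))^q < 1,
\end{equation*}
where strict inequality $\theta<1$ is \eqref{cri}. If $b>0$, then since $q/p>1$ and $a,b\in[0,1]$,
\begin{equation*}
1 \;\le\; a^{q/p} + \theta b^{q/p} \;<\; a^{q/p} + b^{q/p} \;\le\; a + b \;=\; 1,
\end{equation*}
a contradiction. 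Hence $b=0$, $a=1$, so $f\not\equiv 0$ with $\|f\|_{L^p(\partial\Omega)}=1$. The Brezis-Lieb identity then forces $\|E_2 f\|_{L^q(\Omega)}=\mathcal{E}_2(\Omega)$, so $f$ realizes the supremum in \eqref{eq:min}. Nonnegativity of $f$ is inherited from $\{f_k\}$ via weak convergence, and continuity follows by inspecting the Euler-Lagrange equation
\begin{equation*}
f(y)^{p-1} \;=\; C\int_\Omega (E_2 f)(x)^{q-1}|x-y|^{2-n}\,\d x, \qquad y\in\partial\Omega,
\end{equation*}
whose right-hand side lies in $C^0(\overline{\Omega})$ after a standard bootstrap from the integrability of $E_2 f$.

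The main obstacle is the concentration bound. Its delicacy lies in the nonlocal nature of $E_2$, which converts boundary data on $\partial\Omega$ into interior mass: one must sharply compare $\|E_2 h_k\|_{L^q(\Omega)}$ with the corresponding half-space quantity after boundary flattening, and quantitatively control the geometric error terms arising from the curvature of $\partial\Omega$ as the concentration scale shrinks. Everything else reduces to the Brezis-Lieb lemma and elementary algebra exploiting $q/p>1$ and $\theta<1$.
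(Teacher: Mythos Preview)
Your approach is genuinely different from the paper's. The paper does \emph{not} run concentration--compactness at the critical level. Instead it considers the subcritical suprema $\mathcal E_{2,q}(\Omega)$ for $2<q<2^*$, shows each is attained by some $f_q\in C^1(\partial\Omega)$ via compactness of $E_2:L^{2(n-1)/n}(\partial\Omega)\to L^q(\Omega)$, and then proves (Lemma~\ref{lemma:SubcriticalExtremalsBounded}) that under \eqref{cri} the family $(f_q)$ is bounded in $C^0(\partial\Omega)$. This last step is a blow-up argument: if $\max f_q\to\infty$, one rescales around the maximum point, passes to limit functions $U,V$ on the half-space satisfying the integral system, and derives $\mathcal E_2(\Omega)\le \mathcal E_2(B_1)$, contradicting \eqref{cri}. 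Arzel\`a--Ascoli then gives a uniform limit $f_*$ which is the extremal. Your route is more in the spirit of Lions; the paper's is closer to the Yamabe-problem paradigm of subcritical approximation.

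There is, however, a real gap in your argument. You invoke the Brezis--Lieb identity
\[
1=\|f\|_{L^p(\partial\Omega)}^p+\lim_{k\to\infty}\|f_k-f\|_{L^p(\partial\Omega)}^p,
\]
but Brezis--Lieb requires almost-everywhere convergence $f_k\to f$ on $\partial\Omega$, and you have only established \emph{weak} convergence $f_k\rightharpoonup f$ in $L^p(\partial\Omega)$. Pointwise convergence of $E_2f_k$ in $\Omega$ (which you correctly deduce) says nothing about pointwise behavior of $f_k$ on $\partial\Omega$; the operator $E_2$ does not invert to recover boundary data. Without the identity $a+b=1$ your dichotomy arithmetic does not close: weak lower semicontinuity gives only $a\le 1$, and there is no general bound of the form $b\le 1-a$ for $L^p$ with $p\ne 2$ absent a.e.\ convergence. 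The concentration bound you flag as ``the main obstacle'' is also left at the level of a sketch; making it precise essentially requires either the paper's $\epsilon$-sharp inequality (Proposition~\ref{prop2-1}) combined with compactness of $E_3$, or Lions' second concentration--compactness lemma adapted to this nonlocal operator---neither of which is immediate. The paper's subcritical route sidesteps both difficulties: compactness is available for every $q<2^*$, and the blow-up analysis replaces the measure-theoretic profile decomposition.
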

In view of Theorem \ref{theorem:SupremumCriterion} one is naturally led to ask for which domains $\Omega$ (if any) does \eqref{cri} hold? We will show that if $\Omega$ is an annular domain whose hole is sufficiently small then \eqref{cri} holds. 

\begin{thm}
\label{theorem:RieszKernelAnnulusExample}
Consider the annular domain $A_r=B_1\setminus B_r$ for $0< r< 1$. For all $r$ sufficiently small $\cal E_2(A_r)> \cal E_2(B_1)$. Consequently, for such $r$ the supremum in the definition of $\cal E_2(A_r)$ is attained. 
\end{thm}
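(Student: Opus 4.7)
The plan is to construct an explicit one-parameter test function on $\partial A_r$ whose ratio beats the sharp constant on $B_1$ as $r \to 0$. The starting observation is that on $B_1$ the constant function $f_0 \equiv 1$ on $\partial B_1$ is an extremizer for $\cal E_2(B_1)$: since $|x-y|^{-(n-2)}$ is harmonic in $x$ away from $y$, $E_2 f_0$ is a harmonic radial function on $B_1$, hence equal to its value at the origin, $n\omega_n$. A direct computation gives $\cal J_2(f_0) = (n\omega_n)^{(n-2)/(2(n-1))}\omega_n^{(n-2)/(2n)}$, whose exponent simplifies to the form of the sharp constant in Proposition \ref{prop:BallBestExtensionConstant}. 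This motivates keeping the same constant profile on $\partial B_1$ and adding a tunable amount of mass on the inner boundary $\partial B_r$.

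Concretely, I would take $f_s = \chi_{\partial B_1} + s\,\chi_{\partial B_r}$ on $\partial A_r$ with parameter $s>0$ to be chosen. Applying the same harmonicity argument to the inner sphere ($E_2\chi_{\partial B_r}$ is harmonic and radial on $\bb R^n\setminus\overline{B_r}$ with prescribed decay at infinity) yields the explicit formula
\[
E_2 f_s(x) = n\omega_n\left(1 + s\, r^{n-1}|x|^{-(n-2)}\right), \qquad x\in A_r.
\]
Disjoint supports give $\|f_s\|_{L^p(\partial A_r)}^p = n\omega_n(1 + s^p r^{n-1})$ for $p = 2(n-1)/n$. Because $sr^{n-1}|x|^{-(n-2)} \le sr$ is uniformly small on $A_r$ for fixed $s$ and small $r$, a first-order Taylor expansion $(1+u)^q = 1 + qu + O(u^2)$, $q = 2n/(n-2)$, is uniformly valid. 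Using the elementary identity $\int_{A_r}|x|^{-(n-2)}\,dx = n\omega_n(1-r^2)/2$ and the bound $\int_{A_r}|x|^{-2(n-2)}\,dx = O(1 + r^{-(n-4)})$ (a logarithm in $n=4$, bounded in $n=3$), I would obtain
\[
\|E_2 f_s\|_{L^q(A_r)}^q = (n\omega_n)^q\left[\omega_n(1-r^n) + \tfrac{qn\omega_n}{2}\,s\,r^{n-1}(1-r^2) + o(sr^{n-1})\right].
\]

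Dividing these two expansions, linearizing the denominator $(1+s^p r^{n-1})^{q/p}$, and subtracting the ball constant $\cal E_2(B_1)^q = (n\omega_n)^{q(1-1/p)}\omega_n$ gives the key identity
\[
\cal J_2(f_s)^q - \cal E_2(B_1)^q = (n\omega_n)^{q(1-1/p)}\left[-\omega_n r^n + g(s)\,r^{n-1} + o(r^{n-1})\right],
\]
where $g(s) = q\omega_n\bigl(\tfrac{n}{2}s - \tfrac{1}{p}s^p\bigr)$. Since $p>1$, the function $g$ is strictly positive on the nonempty interval $(0,(np/2)^{1/(p-1)})$. Fixing any $s^\ast$ in this interval, the $r^{n-1}$ term eventually dominates the $r^n$ deficit, so $\cal J_2(f_{s^\ast}) > \cal E_2(B_1)$ for all sufficiently small $r$, hence $\cal E_2(A_r) > \cal E_2(B_1)$. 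Existence of an extremizer then follows immediately from Theorem \ref{theorem:SupremumCriterion}.

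The main obstacle is controlling the quadratic remainder in the binomial expansion uniformly on $A_r$, where the integrand concentrates near $\partial B_r$; the key point is that this remainder scales as $s^2 r^{2(n-1)}\cdot r^{-(n-4)} = s^2 r^{n+2}$ for $n\geq 5$ (and better for $n=3,4$), which is genuinely negligible next to the main correction $sr^{n-1}$. Geometrically the argument works because the surface area $r^{n-1}$ of the inner boundary shrinks more slowly than the volume $r^n$ of the removed hole, leaving just enough boundary mass on $\partial B_r$ to produce a strict improvement over the ball constant.
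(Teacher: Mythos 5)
Your proof is correct. It rests on the same two elementary identities \eqref{eq:HarmoncExtensionB1} and \eqref{eq:HarmonicExtensionOutsideBr} used in the paper, and on the same underlying geometric mechanism: the perimeter $r^{n-1}$ of the inner sphere shrinks more slowly than the volume $r^n$ of the removed hole, so the gain on the numerator beats the loss on the denominator. The paper, however, routes the test function through the bilinear form, bounding $\cal E_2(A_r)$ from below by
\[
C_2(A_r)=\abs{A_r}^{-\frac{n+2}{2n}}\abs{\partial A_r}^{-\frac{n}{2(n-1)}}\int_{A_r}\int_{\partial A_r}\frac{\d S_y\,\d x}{\abs{x-y}^{n-2}},
\]
which is $\cal J_2$ tested in dual form with the constant function on both sides. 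This is the decisive simplification: because $C_2$ is linear in $E_2 f$, every integral is evaluated \emph{exactly} and no Taylor expansion of $(1+u)^{2^*}$ is needed; the only approximations are the trivial scalar expansions $(1-r^n)^{(n+2)/(2n)}=1+\circ(r^{n-1})$ and $(1+r^{n-1})^{n/(2(n-1))}\leq 1+\frac{n}{2(n-1)}r^{n-1}$. Your direct computation of $\norm{E_2 f_s}_{L^{2^*}(A_r)}$ works, and your control of the quadratic remainder $s^2 r^{2(n-1)}\int_{A_r}\abs{x}^{-2(n-2)}\,\d x=O(r^{n+2})$ for $n\geq 5$ (with logarithmic/bounded improvements for $n=4,3$) is the right way to justify the uniform expansion, but it is strictly more work. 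The extra parameter $s$ buys nothing here: the sign condition $g(s)>0$ already holds at $s=1$ since $g(1)=q\omega_n\frac{n}{2}\cdot\frac{n-2}{n-1}>0$, and the order of the gain, $r^{n-1}$, is independent of $s$. Both approaches deliver the same leading-order comparison and the conclusion via Theorem \ref{theorem:SupremumCriterion}.
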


The proof of Theorem \ref{theorem:RieszKernelAnnulusExample} is based on the construction of a suitable global test function. Contrary to the resolution of Yamabe problem where the test function is chosen based on the positive mass theorem or the conformal non-flatness of the boundary, our test function is not a concentrating function. This motivated us to study the Poisson-kernel-based extension operator which was studied by Hang, Wang and Yan in \cite{HangWangYan2008, HWY2009}. For $f:\bdy\Omega \to \bb R$, let $P_2 f$ be the harmonic extension of $f$ which coincides with $f(x)$ on the boundary:
\begin{equation*}
	\begin{cases}
	-\Delta P_2 f(x) =0 &\text{ for }x \in \Omega\\
	P_2f(x)=f(x) &\text{ for }x \in \partial \Omega.
	\end{cases}
\end{equation*}
It was proved by Hang, Wang and Yan \cite{HWY2009} that
\begin{equation}\label{poisson-1}
	\Theta_2(\Omega)
	=
	\sup_{0\neq f \in C(\partial \Omega)} \frac {\|P_2 f\|_{L^{2n/(n-2)}(\Omega)} }{\|f\|_{L^{2(n-1)/(n-2)}(\partial \Omega)}}
	<\infty.
\end{equation}
Similarly to Proposition \ref{prop:BallBestExtensionConstant} and Theorem \ref{theorem:SupremumCriterion} they also showed that for any bounded domain $\Omega$ (their results were proved for general manifolds):
\begin{equation}\label{poisson-2}
	\Theta_2(\Omega) \ge \Theta_2(B_1),
\end{equation}
and $\Theta_2(\Omega)$ is achieved whenever $\Theta_2(\Omega) >\Theta_2(B_1).$ They further conjectured that strict inequality holds in \eqref{poisson-2} whenever $\Omega$ is not conformal to Euclidean ball. However, no example of such a domain $\Omega$ was given. It was noted in their paper that if $\Theta_2(\Omega) >\Theta_2(B_1)$ then there is a metric $g$ in the conformal class of the Euclidean metric $g_0$ which is scalar flat and such that the isoperimetric constant $\abs{\Omega}_g^{\frac 1n}/\abs{\bdy\Omega}_g^{\frac{1}{n - 1}}$ of $(\Omega, g)$ is strictly larger than the isoperimetric constant of the Euclidean ball. On the other hand, using a local expansion (see (3.3) in Morgan and Johnson \cite{MorganJohnson2000}), one can see that on a Ricci flat manifold, there are domains with small volume that have larger isoperimetric constant than the Euclidean ball. Here we shall provide large-volume examples of domains $\Omega$ for which $\Theta_2(\Omega) >\Theta_2(B_1).$ 
\begin{thm}
\label{theorem:PoissonKernelAnnulusExample}
For $0<r<1$ consider the annular domain $A_r = B_1\setminus B_r$. If $r$ is sufficiently small then there is a metric $g$ on $A_r$ which is conformally equivalent to the Euclidean metric, has zero scalar curvature and for which
\begin{equation*}
	\frac{\abs{A_r}_g^{1/n}} {\abs{\partial A_r}_g^{1/(n-1)}}
	>
	\frac{\abs{B_1}^{1/n}}{\abs{\partial B_1}^{1/(n-1)}}
	=
	n^{-1/(n-1)}\omega_n^{-1/n(n-1)}.
\end{equation*}
\end{thm}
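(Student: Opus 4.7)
My plan is to establish the strict inequality $\Theta_2(A_r) > \Theta_2(B_1)$ for $r$ small by exhibiting a simple test function on $\bdy A_r$, and then invoke the Hang--Wang--Yan framework recalled in the paragraph preceding the statement: once $\Theta_2(A_r) > \Theta_2(B_1)$, the supremum in \eqref{poisson-1} is attained by some positive $f \in C(\bdy A_r)$, the conformal metric $g := (P_2 f)^{4/(n-2)} g_0$ is automatically scalar flat (since $P_2 f$ is harmonic), and its isoperimetric ratio equals $\Theta_2(A_r)^{2/(n-2)}$, which therefore strictly exceeds $\Theta_2(B_1)^{2/(n-2)} = |B_1|^{1/n}/|\bdy B_1|^{1/(n-1)}$; this produces exactly the metric required by the theorem.

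In the same non-concentrating spirit as in the proof of Theorem \ref{theorem:RieszKernelAnnulusExample}, I would take the simplest possible radial test function: fix a constant $a > 1$ and set $f_r \equiv 1$ on $\bdy B_1$ and $f_r \equiv a$ on $\bdy B_r$. The Poisson extension $u_r := P_2 f_r$ is then radial and harmonic, hence of the form $u_r(x) = A + B|x|^{2-n}$ with $A, B$ pinned down by the two boundary values. Setting $s := r^{n-2}$ and expanding in $s$ yields the uniform estimate
\begin{equation*}
    u_r(\rho) = 1 + s(a - 1)(\rho^{2-n} - 1) + O(s^2),
\end{equation*}
so that $u_r > 1$ throughout $A_r$ when $a > 1$, with the gain $u_r - 1$ of order $s = r^{n-2}$.

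Writing $q = 2n/(n-2)$ and $q' = 2(n-1)/(n-2)$, I recall that $\Theta_2(B_1) = \omega_n^{1/q}/(n\omega_n)^{1/q'}$: this value is realized on $B_1$ by the constant function $f \equiv 1$, and is consistent with Hang--Wang--Yan's identification $\Theta_2(B_1)^{2/(n-2)} = |B_1|^{1/n}/|\bdy B_1|^{1/(n-1)}$. A direct integration of the expansion above gives
\begin{equation*}
    \norm{u_r}_{L^q(A_r)}^q = \omega_n + c_n (a - 1)\, s + o(s),
    \qquad
    \norm{f_r}_{L^{q'}(\bdy A_r)}^{q'} = n\omega_n + n\omega_n\, a^{q'}\, r^{n-1},
\end{equation*}
with $c_n > 0$ depending only on $n$. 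Since $r^{n-1} = r \cdot s = o(s)$, the positive gain of scale $r^{n-2}$ in the numerator of the ratio strictly dominates the positive loss of scale $r^{n-1}$ in the denominator, so
\begin{equation*}
    \frac{\norm{u_r}_{L^q(A_r)}}{\norm{f_r}_{L^{q'}(\bdy A_r)}} > \Theta_2(B_1)
\end{equation*}
for all sufficiently small $r > 0$.

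The main obstacle is confirming that the leading improvement in the numerator really sits at scale $r^{n-2}$ rather than something smaller. This is precisely why $a$ must be taken strictly greater than $1$: with $a = 1$, one has $B = 0$ in the radial ansatz, $u_r$ collapses to the constant $1$, and all improvement disappears. Once $a > 1$ is fixed, the strict dominance of $r^{n-2}$ over $r^{n-1}$ for $n \geq 3$ seals the argument, and the Hang--Wang--Yan existence theorem then produces the extremizer whose conformal factor yields the desired metric $g$.
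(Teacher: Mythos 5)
Your test function and overall strategy coincide with the paper's: piecewise-constant boundary data $f = 1$ on $\partial B_1$, $f = a > 1$ on $\partial B_r$, explicit radial harmonic extension $P_2f = c_1|x|^{2-n}+c_2$, dominance of the $r^{n-2}$ gain over the $r^{n-1}$ loss, and then the Hang--Wang--Yan result to pass to an extremal metric. The one place you diverge is that the paper applies H\"older (``Lebesgue duality'') to bound $\|P_2f\|_{L^{2^*}(A_r)}$ from below by $\int_{A_r}P_2f\,dx / |A_r|^{(n+2)/(2n)}$, so that only the trivially computable $\int_{A_r}P_2f\,dx$ ever appears; you instead propose to expand $\|P_2f\|_{L^{2^*}(A_r)}^{2^*}$ directly.

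That direct route is salvageable but the stated expansion is imprecise. The exact identity is $u_r - 1 = c_1(\rho^{2-n}-1)$ with $c_1 = s(a-1)/(1-s)$, and at $\rho = r$ this equals $a-1$, so $u_r$ is \emph{not} uniformly close to $1$ and $u_r^{2^*}$ cannot be Taylor-expanded to low order across the whole annulus; the ``uniform $O(s^2)$'' error in your first display is in fact of order $s$ at $\rho = r$. The asserted expansion $\|u_r\|_{L^{2^*}}^{2^*} = \omega_n + c_n(a-1)s + o(s)$ with $c_n>0$ is nevertheless correct, but to justify it you should either split the radial integral at $\rho = r^{\beta}$ for some $\beta\in(1-2/n,1)$ and estimate the two regions separately, or, much more economically and closest in spirit to the paper's H\"older step, observe that only a lower bound is needed and use convexity $u_r^{2^*}\geq 1 + 2^*(u_r-1)$, so that $\int_{A_r}(u_r-1)\,dx = c_1\int_{A_r}(\rho^{2-n}-1)\,dx = c_1\,n\omega_n\bigl(\tfrac12-\tfrac1n+O(r^2)\bigr)$ delivers the positive $s$-order gain immediately. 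With one of those bridges in place the argument is complete and matches the paper.
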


At the time of writing this paper we learned that T. Jin and J. Xiong \cite{JinXiongPreprint} showed that $\Theta_2(\Omega) > \Theta_2(B_1)$ whenever $n\geq 12$ and $(\overline \Omega, g)$ is a bounded subset of $\bb R^n$ having smooth connected boundary. \\

 This paper is organized as follows. In Section \ref{section:OmegaExtensionRestrictionInequalities}, for smooth bounded $\Omega$ we establish the HLS-type inequality, the extension inequality and a corresponding restriction inequality as well as the compactness of $E_2$ for subcritical exponents. In Section \ref{section:ExistenceOfSupremum} we prove Theorem \ref{theorem:SupremumCriterion}, the criterion for the  existence of extremal functions and show that the criterion is satisfied for an annular domain whose hole is sufficiently small. In Section \ref{section:SupremumForPoissonExtension} we prove Theorem \ref{theorem:PoissonKernelAnnulusExample}. Section \ref{section:Regularity} is an appendix containing statements of useful regularity lemmas.  \\
 
Unless explicitly stated otherwise, we assume throughout that $n\geq 3$.  The following notational conventions will be used: We will use $2^* = \frac{2n}{n - 2}$ to denote the critical exponent in the Sobolev embedding. For $p\in [1, \infty]$ we will use $p'$ to denote the Lebesgue conjugate exponent corresponding to $p$ so that $\frac1 p + \frac{1}{p'} = 1$. For $x\in \bb R^n$ we will use $x = (x', x_n)\in \bb R^{n -1}\times \bb R$, where $x'= (x_1,\cdots, x_{n - 1})$. At times use the identification $\bb R^{n - 1} = \bdy \bb R_+^n$.  In such instances no distinction is made between $x'\in \bb R^{n - 1}$ and $(x', 0)\in \bdy \bb R_+^n$. 
\section{Extension, restriction and HLS-type inequalities and compactness of $E_2$ for subcritical exponents}
\label{section:OmegaExtensionRestrictionInequalities}
\subsection{$\epsilon$-sharp inequality}
In this subsection we establish an $\epsilon$-sharp inequality for the extension operators $E_\alpha$ on smooth bounded domains.
\begin{prop}
\label{prop2-1}
Suppose $\alpha, p$ satisfy $1< \alpha< n$ and $1< p< (n - 1)/(\alpha - 1)$ and let $q$ be given by 
\begin{equation}
\label{eq:alphaCriticalExtensionExponents}
	\frac 1 q
	= 
	\frac{n -1}{n}\left(\frac 1 p - \frac{\alpha - 1}{n - 1}\right). 
\end{equation}
For any $\epsilon>0$, there is a constant $C(\epsilon)>0$, such that for all $f\in L^p(\bdy \Omega)$
\begin{equation}\label{eq:EpsilonLevelInequality}
	\norm{E_\alpha f}_{L^q(\Omega)}^p 
	\leq 
	(\cal{C}_\alpha(n,p)+\epsilon)^p\norm{f}_{L^p(\partial \Omega)}^p +C(\epsilon)\norm{E_{\alpha + 1} \abs{f}}_{L^q(\Omega)}^p . 
\end{equation}
\end{prop}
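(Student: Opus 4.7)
The plan is to reduce the inequality on $\Omega$ to the sharp half-space HLS of Theorem \ref{theorem:UpperHalfSpaceHLStype} via a localization at a small scale $\delta$, using the elementary kernel bound
\begin{equation*}
|x - y|^{-(n - \alpha)}
\leq
\delta^{-1}\,|x - y|^{-(n - \alpha - 1)}
\qquad\text{whenever } |x - y| \geq \delta,
\end{equation*}
which turns all long-range interactions into contributions to the $E_{\alpha + 1}$ term on the right of \eqref{eq:EpsilonLevelInequality}.

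First I would fix $\delta > 0$ small (to be chosen in terms of $\epsilon$ at the end) and partition $\partial\Omega$ up to measure zero into relatively open patches $V_1, \ldots, V_N$ of diameter at most $\delta$, each the base of a boundary coordinate chart $\Phi_k$ flattening $V_k$ onto an open subset of $\partial \bb R_+^n$. Using the inward normal of $\partial\Omega$, each $V_k$ extends to a solid slab $W_k \subset \Omega$ of thickness $\delta$; for $\delta$ small, the $W_k$ are pairwise disjoint and cover the $\delta$-tubular neighborhood of $\partial\Omega$ in $\Omega$. Smoothness of $\partial\Omega$ guarantees that choosing $\delta$ small renders the boundary Jacobians of $\Phi_k$ and the metric distortions $|\Phi_k(\tilde x) - \Phi_k(\tilde y)|/|\tilde x - \tilde y|$ within $1 + O(\delta)$ of $1$, uniformly in $k$.

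Second, for $x \in W_k$ I would split $E_\alpha f(x)$ into the near part, where $y \in V_k$, and the far part, where $y \in \partial\Omega \setminus V_k$. The far part obeys $|x-y| \geq c\delta$ for a geometric constant $c > 0$, hence is pointwise dominated by $(c\delta)^{-1} E_{\alpha+1}|f|(x)$. The near part, after transfer via $\Phi_k$ to an integral over the half-space, differs from the half-space extension integral only by $(1+O(\delta))$ factors from the Jacobian and kernel distortion, so Theorem \ref{theorem:UpperHalfSpaceHLStype} (applied at the exponents $p, q$ specified by \eqref{eq:alphaCriticalExtensionExponents}) bounds its $L^q(W_k)$ norm by $(1+O(\delta))\cal C_\alpha(n,p)\|f\|_{L^p(V_k)}$. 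On the interior remainder $\Omega \setminus \bigcup_k W_k$ every $x$ has $d(x,\partial\Omega) \geq c\delta$, so $E_\alpha f(x) \leq (c\delta)^{-1} E_{\alpha+1}|f|(x)$ directly.

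Third, I would assemble the pieces. Raising each chart-wise bound to the $q$th power, summing over $k$, and applying the convexity inequality $(a+b)^q \leq (1+\epsilon)^q a^q + C(\epsilon) b^q$, I am left with the key reduction
\begin{equation*}
\sum_{k=1}^N \|f\|_{L^p(V_k)}^q
=
\sum_{k=1}^N \bigl(\|f\|_{L^p(V_k)}^p\bigr)^{q/p}
\leq
\Bigl(\sum_{k=1}^N \|f\|_{L^p(V_k)}^p\Bigr)^{q/p}
=
\|f\|_{L^p(\partial\Omega)}^q,
\end{equation*}
where the middle inequality uses $q > p$, which follows from $1 < p < (n-1)/(\alpha - 1)$ and the definition of $q$. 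Taking $q$th roots and then applying the convexity inequality once more at exponent $p$, with $\delta$ chosen small enough that $(1 + O(\delta))^p \leq 1 + \epsilon/\cal C_\alpha(n,p)^p$, yields \eqref{eq:EpsilonLevelInequality}.

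The main obstacle is precisely this combining step: preserving the sharp constant while summing chart estimates requires the $\ell^{q/p} \hookrightarrow \ell^1$ embedding on the right-hand side, which is available exactly because $q > p$; without it, the multiplicity $N(\delta)$ of the cover would contaminate the leading constant. The geometric preliminaries — existence of a tubular neighborhood, nearly-isometric boundary charts, and uniform Jacobian control at scale $\delta$ — are standard for smooth bounded domains.
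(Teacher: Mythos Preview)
There is a genuine gap in your ``far part'' estimate. You claim that for $x\in W_k$ and $y\in \bdy\Omega\setminus V_k$ one has $\abs{x-y}\geq c\delta$, but this fails whenever $V_j$ is a patch adjacent to $V_k$: if the $V_k$ form a partition of $\bdy\Omega$ then adjacent patches share a common boundary, and a point $x\in W_k$ lying over that common boundary can be arbitrarily close to points $y\in V_j$. Thus the kernel bound $\abs{x-y}^{-(n-\alpha)}\leq (c\delta)^{-1}\abs{x-y}^{-(n-\alpha-1)}$ is unavailable there, and the contribution from neighbouring patches is \emph{not} dominated by $E_{\alpha+1}\abs f$. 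Enlarging each $V_k$ to an overlapping $2\delta$--patch repairs the distance bound but destroys the identity $\sum_k\norm{f}_{L^p(V_k)}^p=\norm{f}_{L^p(\bdy\Omega)}^p$ on which your $\ell^{q/p}\hookrightarrow\ell^1$ step rests; one picks up a covering multiplicity $M\geq 2$ and the resulting constant is $M^{1/p}\cal C_\alpha(n,p)$, not $\cal C_\alpha(n,p)+\epsilon$.

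The paper resolves exactly this tension by replacing the hard partition with a \emph{smooth} partition of unity $\{\rho_i\}$ normalized so that $\sum_i\rho_i^p\equiv 1$, and by replacing the near/far split with a commutator split $\rho_i E_\alpha f = E_\alpha(\rho_i f) + \bigl(\rho_i E_\alpha f - E_\alpha(\rho_i f)\bigr)$. The commutator is controlled pointwise by
\[
\abs{\rho_i(x)E_\alpha f(x) - E_\alpha(\rho_i f)(x)}
\leq \int_{\bdy\Omega}\frac{\abs{\rho_i(x)-\rho_i(y)}\,\abs{f(y)}}{\abs{x-y}^{n-\alpha}}\,\d S_y
\leq \norm{\Grad\rho_i}_{L^\infty}\, E_{\alpha+1}\abs f(x),
\]
which uses the Lipschitz continuity of $\rho_i$ rather than any lower bound on $\abs{x-y}$ and therefore works even across patch boundaries. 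The normalization $\sum_i\rho_i^p=1$ then gives $\sum_i\norm{\rho_i f}_{L^p(\bdy\Omega)}^p=\norm{f}_{L^p(\bdy\Omega)}^p$ exactly, and the same $q>p$ convexity you invoke (as the triangle inequality in $L^{q/p}$) recombines the local estimates with no loss in the leading constant.
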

We note first that if $\alpha$, $p$ and $q$ are as in the statement of Proposition \ref{prop2-1} then the extension operator $\tilde E_\alpha$ for the upper half space given in \eqref{eq:UHSExtensionOperator} is bounded from $L^p(\bdy \bb R_+^n)$ to $L^q(\bb R_+^n)$ with 
\begin{equation}
\label{eq:UpperHalfExtensionInequality}
	\norm{\tilde E_\alpha f}_{L^q(\bb R_+^n)} 
	\leq
	\cal C_\alpha(n,p) \norm{f}_{L^p(\bdy \bb R_+^n)}. 
\end{equation}
In fact, this operator is also well-defined and bounded from $L^p(\bdy \bb R_+^n)$ to $L^q(\bb R^n \setminus \overline{\bb R_+^n})$. Therefore, we have the following bound for the extension to all of $\bb R^n$: 
\begin{equation}
\label{eq:FullSpaceExtensionInequality}
	\norm{\tilde E_\alpha f}_{L^q(\bb R^n)} 
	\leq
	2\cal C_\alpha(n,p) \norm{f}_{L^p(\bdy \bb R_+^n)}.
\end{equation}
By using above two inequalities and  flatting the boundary, we easily obtain the following two lemmas  from Theorem \ref{theorem:UpperHalfSpaceHLStype}.\\
\begin{lem}\label{epsilon_lemma}
Suppose $1< \alpha< n$ and $1< p< (n - 1)/(\alpha - 1)$ and let $q$ be given by \eqref{eq:alphaCriticalExtensionExponents}. For all $\epsilon>0$ and all $y^0\in \partial \Omega$, there is a positive constant $\delta=\delta(y^0, \epsilon)>0$, such that if $f\in L^p(\bdy \Omega)$ with ${\rm supp}\; f \subset\subset \bdy \Omega\cap B_\delta(y^0)$ then 
\begin{equation}
\label{eq:LocalEpsilonLevelExtension}
	\norm{E_\alpha f}_{L^q(\Omega\cap B_\delta(y^0))} 
	\leq 
	(\cal{C}_\alpha(n,p)+\epsilon)\norm{f}_{L^p(\partial \Omega \cap B_\delta(y^0) )}. 
\end{equation}
\end{lem}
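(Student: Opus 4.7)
The plan is to reduce the local estimate on $\Omega$ to the sharp half-space extension inequality \eqref{eq:UpperHalfExtensionInequality} via a boundary-flattening diffeomorphism, treating the resulting geometric distortion as a small perturbation that can be absorbed into $\epsilon$ once $\delta$ is taken small.

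First I would normalize: after a rigid motion, assume $y^0 = 0$ with $T_{y^0}\partial\Omega = \bb R^{n-1}\times\{0\}$, and write $\partial\Omega$ locally as a graph $\{x_n = \varphi(x')\}$ with $\varphi(0) = 0$ and $\nabla\varphi(0) = 0$. Then $\Psi(x',x_n) := (x', x_n - \varphi(x'))$ is a diffeomorphism from a neighborhood of $0$ in $\overline\Omega$ onto a neighborhood of $0$ in $\overline{\bb R_+^n}$, with $\det D\Psi \equiv 1$. The crucial metric estimate is
\begin{equation*}
   |\Psi(x) - \Psi(y)| = |x-y|(1+\eta(\delta)),\qquad x,y\in B_\delta(y^0),
\end{equation*}
where $\eta(\delta)\to 0$ as $\delta\to 0$; this follows from the bound $|\varphi(x')-\varphi(y')|\leq C\delta|x'-y'|$ on $B_\delta(y^0)$, a direct consequence of $\nabla\varphi(0)=0$ and smoothness.

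Setting $\tilde f(z') := f(z',\varphi(z'))\sqrt{1+|\nabla\varphi(z')|^2}$ on $\partial\bb R_+^n$, a change of variables in the integral defining $E_\alpha f$ combined with the distortion estimate gives, for $x\in\Omega\cap B_\delta(y^0)$,
\begin{equation*}
   |E_\alpha f(x)|\leq (1+\eta(\delta))^{n-\alpha}\,\tilde E_\alpha|\tilde f|(\Psi(x)).
\end{equation*}
Taking $L^q$ norms over $\Omega\cap B_\delta(y^0)$, pushing forward by $\Psi$ (whose interior Jacobian is $1$), and applying \eqref{eq:UpperHalfExtensionInequality} to $\tilde E_\alpha|\tilde f|$ yields
\begin{equation*}
   \|E_\alpha f\|_{L^q(\Omega\cap B_\delta(y^0))}\leq(1+\eta(\delta))^{n-\alpha}\,\cal C_\alpha(n,p)\,\|\tilde f\|_{L^p(\partial\bb R_+^n)}.
\end{equation*}
A final direct comparison gives $\|\tilde f\|_{L^p(\partial\bb R_+^n)} = (1+O(\delta^2))\|f\|_{L^p(\partial\Omega\cap B_\delta(y^0))}$, so the factor in front of the right-hand norm becomes $(1+O(\eta(\delta)))\cal C_\alpha(n,p)$, which is at most $\cal C_\alpha(n,p)+\epsilon$ for $\delta$ sufficiently small.

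The work here is not deep but requires care: the main obstacle is simply the bookkeeping of three multiplicative error sources — the kernel distortion $|x-y|/|\Psi(x)-\Psi(y)|$, the interior Jacobian of $\Psi$, and the boundary surface-element factor $\sqrt{1+|\nabla\varphi|^2}$ — each of which must be shown to tend to $1$ uniformly on $B_\delta(y^0)$ as $\delta\to 0$. All three are controlled by smoothness of $\partial\Omega$ together with the normalization $\nabla\varphi(0)=0$, so no ingredient beyond \eqref{eq:UpperHalfExtensionInequality} is needed.
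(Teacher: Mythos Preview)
Your proposal is correct and follows exactly the approach the paper indicates: flatten $\partial\Omega$ near $y^0$ and apply the sharp half-space extension inequality \eqref{eq:UpperHalfExtensionInequality}, absorbing the diffeomorphism's metric and Jacobian distortions into $\epsilon$. The paper omits the bookkeeping you supply, merely remarking that the lemma follows from \eqref{eq:UpperHalfExtensionInequality} by ``flatting the boundary,'' so your write-up is a faithful expansion of the same argument.
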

\begin{lem}
\label{lemma:TwoSidedLocalExtensionBound}
Let $\alpha$ and $p$ satisfy $1< \alpha< n$ and $1< p< (n - 1)/(\alpha - 1)$ and let $q$ be given by \eqref{eq:alphaCriticalExtensionExponents}. There exists a constant $C = C(n,\alpha, p)>0$ with the following property: for all $y^0\in \partial \Omega$ there is a $\delta=\delta(y^0)>0$ such that if $f\in L^p(\bdy \Omega)$ with ${\rm supp}\; f \subset\subset \bdy \Omega\cap B_\delta(y^0)$ then 
\begin{equation}
\label{eq:TwoSidedLocalExtension}
	\norm{E_\alpha f}_{L^q(B_\delta(y^0))} 
	\leq 
	C\norm{f}_{L^p(\partial \Omega \cap B_\delta(y^0) )}.
\end{equation}
\end{lem}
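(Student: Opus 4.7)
The plan is to adapt the boundary-flattening strategy used to establish Lemma \ref{epsilon_lemma}, but to invoke the global extension bound \eqref{eq:FullSpaceExtensionInequality} in place of the half-space bound \eqref{eq:UpperHalfExtensionInequality}. The trade-off is that we lose the near-sharp constant $\cal C_\alpha(n,p) + \epsilon$, but we gain control over the full ball $B_\delta(y^0)$ rather than only $\Omega \cap B_\delta(y^0)$.

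Concretely, fix $y^0 \in \partial\Omega$ and choose $\delta > 0$ small enough that there is a smooth diffeomorphism $\Phi$ from a neighborhood of $y^0$ onto a neighborhood of the origin in $\bb R^n$ which straightens $\partial\Omega$ (so $\Phi(\partial\Omega) \subset \partial \bb R^n_+$ and $\Phi(\Omega) \subset \bb R^n_+$ locally) and whose bi-Lipschitz constant, together with the surface Jacobian, can be made as close to $1$ as one wishes by further shrinking $\delta$. For $f$ supported in $\partial\Omega \cap B_\delta(y^0)$, set $\tilde f := |f|\circ \Phi^{-1}$ on $\partial\bb R^n_+$, extended by zero. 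The bi-Lipschitz estimate $|\Phi(x) - \Phi(y)| \approx |x-y|$ together with the surface Jacobian of $\Phi$ yield the pointwise comparison
\begin{equation*}
	|E_\alpha f(x)| \leq C_1 \, \tilde E_\alpha \tilde f(\Phi(x)) \qquad \text{for all } x \in B_\delta(y^0),
\end{equation*}
where $C_1$ depends only on the distortion of $\Phi$.

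The crucial observation is that $\Phi(B_\delta(y^0))$ straddles $\partial \bb R^n_+$: it meets both $\bb R^n_+$ and $\bb R^n \setminus \overline{\bb R^n_+}$. Hence bounding the $L^q$-norm of $E_\alpha f$ over the entire ball $B_\delta(y^0)$ requires the two-sided bound on $\tilde E_\alpha \tilde f$, which is precisely \eqref{eq:FullSpaceExtensionInequality}. Combining this with the pointwise comparison, changing variables to pass from $L^q(B_\delta(y^0))$ to $L^q(\Phi(B_\delta(y^0))) \subset L^q(\bb R^n)$, and using the corresponding comparison $\|\tilde f\|_{L^p(\partial \bb R^n_+)} \leq C_2 \|f\|_{L^p(\partial\Omega \cap B_\delta(y^0))}$, one arrives at
\begin{equation*}
	\norm{E_\alpha f}_{L^q(B_\delta(y^0))} \leq 2 C_1 C_2 \, \cal C_\alpha(n,p) \norm{f}_{L^p(\partial\Omega \cap B_\delta(y^0))}.
\end{equation*}
By shrinking $\delta$ further we may assume $C_1, C_2 \leq 2$, so that $C = 8\,\cal C_\alpha(n,p)$ depends only on $n, \alpha, p$, as required.

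The main obstacle is purely bookkeeping: verifying the uniform bi-Lipschitz control on $\Phi$ and the resulting comparisons for both the Riesz kernel $|x-y|^{\alpha - n}$ and the surface measure on $\partial\Omega$ under $\Phi$. The conceptual content is just the replacement of \eqref{eq:UpperHalfExtensionInequality} by its two-sided analogue \eqref{eq:FullSpaceExtensionInequality}, which in turn follows from the trivial observation that $\tilde E_\alpha$ maps $L^p(\partial\bb R^n_+)$ boundedly into $L^q(\bb R^n \setminus \overline{\bb R^n_+})$ by the same argument used on the upper half space.
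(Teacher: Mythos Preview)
Your proposal is correct and follows essentially the same approach as the paper: the paper states that Lemmas \ref{epsilon_lemma} and \ref{lemma:TwoSidedLocalExtensionBound} are obtained from the half-space and full-space extension inequalities \eqref{eq:UpperHalfExtensionInequality}, \eqref{eq:FullSpaceExtensionInequality} by flattening the boundary, and your argument carries out exactly this plan for Lemma \ref{lemma:TwoSidedLocalExtensionBound}, replacing \eqref{eq:UpperHalfExtensionInequality} with \eqref{eq:FullSpaceExtensionInequality} to gain control on the full ball $B_\delta(y^0)$.
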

\begin{proof}[Proof of Proposition \ref{prop2-1}]
Let $\epsilon>0$. By Lemma \ref{epsilon_lemma} and compactness of $\bdy \Omega$ we may choose $\delta>0$ such that for all $y\in \bdy \Omega$ and all $f\in L^p(\bdy \Omega)$ having ${\rm supp}\; f\subset\subset B_\delta(y)\cap \bdy \Omega$, 
\begin{equation*}
	\norm{E_\alpha f}_{L^q(\Omega\cap B_\delta(y))}
	\leq
	(\cal C_\alpha(n,p) +\epsilon)\norm{f}_{L^p(\bdy \Omega \cap B_\delta(y))}. 
\end{equation*}
Let $\{B_\delta(y^i)\}_{i = 1}^{M + N}$ be an open cover of $\overline\Omega$ such that for each $i$ either $y^i\in \bdy \Omega$ or $B_\delta(y^i)\cap \bdy \Omega = \emptyset$. After reindexing if necessary we may assume that $y^i\in \bdy \Omega$ for $i = 1, \cdots, M$ and $B_\delta(y^i) \cap \bdy \Omega = \emptyset $ for $i = M + 1, \cdots, M + N$. Let $\{\rho_i\}_{i = 1}^{M + N}$ be a smooth partition of unity subordinate to $\{B_\delta(y^i)\}$ satisfying both $0\leq \rho_i(x)\leq 1$ and $\sum_{i = 1}^{M + N} \rho_i^p(x) = 1$ for all $x\in \overline \Omega$. For any $0\leq f\in L^p(\bdy \Omega)$ we have
\begin{eqnarray}
\label{eq:ExtensionNormDecomposition}
	\norm{E_\alpha f}_{L^q(\Omega)}^p
	 \leq 
	\sum_{i = 1}^{M + N}
	\left(
	\norm{E_\alpha (\rho_i f)}_{L^q(\Omega\cap \supp\rho_i)} 
	+ 
	\norm{\rho_i E_\alpha f - E_\alpha (\rho_i f)}_{L^q(\Omega \cap \supp \rho_i)}
	\right)^p. 
\end{eqnarray}
For every $i = 1, \cdots, M + N$ we have
\begin{eqnarray}
\label{eq:ExtensionNormDecompositionMinorTerm}
	\norm{\rho_iE_\alpha f - E_\alpha (\rho_i f)}_{L^q(\Omega \cap \supp \rho_i)}^q
	& \leq & 
	\int_{\Omega \cap \supp\rho_i}
	\left(
	\int_{\bdy \Omega} \frac{\abs{f(y)}\abs{\rho_i(x) - \rho_i(y)}}{\abs{x - y}^{n - \alpha}}\; \d S_y
	\right)^q\; \d x
	\notag
	\\
	& \leq & 
	\max_i\norm{\Grad \rho_i}_{L^\infty(\overline \Omega)}^q
	\int_{\Omega\cap \supp\rho_i}(E_{\alpha  + 1}\abs{f})^q(x)\; \d x
	\notag
	\\
	& \leq & 
	C\norm{E_{\alpha + 1}\abs{f}}_{L^q(\Omega\cap \supp\rho_i)}^q, 
\end{eqnarray}
where $C$ is a positive constant depending on $n, \alpha, p, \epsilon, \Omega$ and the partition of unity $\{B_\delta(y^i)\}$. We denote any such constant by $C(\epsilon)$. For $i = 1, \ldots,  M$, Lemma \ref{epsilon_lemma} and the choice of $\delta$ guarantee that 
\begin{equation}
\label{eq:UseEpsilonLemma}
	\norm{E_\alpha (\rho_i f)}_{L^q(\Omega\cap \supp \rho_i)}
	\leq
	(\cal C_\alpha (n,p) +\epsilon)\norm{\rho_i f}_{L^p(\bdy \Omega)}. 
\end{equation}
For $i = M + 1, \cdots, M + N$ we have $\supp \rho_i \cap \supp f = \emptyset$ so $E_\alpha (\rho_i f) = 0$. Using estimates \eqref{eq:ExtensionNormDecompositionMinorTerm} and \eqref{eq:UseEpsilonLemma} in \eqref{eq:ExtensionNormDecomposition} gives
\begin{eqnarray*}
	\norm{E_\alpha f}_{L^q(\Omega)}^p
	& \leq & 
	\sum_{i = 1}^M \left(
	(\cal C_\alpha (n,p) + \epsilon)\norm{\rho_i f}_{L^p(\bdy \Omega)} 
	+ 
	C(\epsilon)\norm{E_{\alpha +1} \abs{f}}_{L^q(\Omega \cap \supp \rho_i)}
	\right)^p
	\\
	& & 
	+ 
	C(\epsilon)\sum_{i = M + 1}^{M + N} \norm{E_{\alpha + 1}\abs{f}}_{L^q(\Omega\cap\supp\rho_i)}^p
	\\
	& \leq & 
	(1 + \epsilon)(\cal C_\alpha (n,p) + \epsilon)^p \sum_{i = 1}^M\norm{\rho_i f}_{L^p(\bdy \Omega)}^p
	+ 
	C(\epsilon)\sum_{i = 1}^{M + N} \norm{E_{\alpha + 1}\abs{ f}}_{L^q(\Omega\cap \supp \rho_i)}^p
	\\
	& \leq & 
	(1 + \epsilon)(\cal C_\alpha (n,p) + \epsilon)^p\norm{f}_{L^p(\bdy\Omega)}^p 
	+ 
	C(\epsilon)\norm{E_{\alpha + 1}\abs{f}}_{L^q(\Omega)}^p. 
\end{eqnarray*}
Since $\epsilon>0$ is arbitrary, estimate \eqref{eq:EpsilonLevelInequality} follows. 
\end{proof}
%
\subsection{HLS type inequality and compactness for $E_\alpha$}
For $\delta \geq 0$ we define
\begin{equation*}
	\Omega^\delta = \{x\in \bb R^n: {\rm dist}(x, \Omega)< \delta\}. 
\end{equation*}
First we prove the boundedness of $E_\alpha: L^p(\bdy\Omega)\to L^q(\Omega^\delta)$ for subcritical exponents $p, q$. 
\begin{lem}
\label{lemma:SubcriticalHLStype}
Let $1< \alpha< n$ and suppose $p, t$ satisfy the following three conditions: $1< p< (n- 1)/(\alpha - 1)$, $\frac 1 t + \frac 1 p >1$ and 
\begin{equation}
\label{eq:alphaSubcriticalExponents}
	\frac 1 t + \frac{n - 1}{np} +\frac{n - \alpha + 1}{n} < 2. 
\end{equation}
There exists $\delta_0>0$ such that for all $0\leq \delta < \delta_0$, there is a constant $C = C(n, \alpha, p, t, \Omega, \delta)>0$ such that 
\begin{equation}
\label{eq:alphaSubcriticalHLS}
	\abs{\int_{\Omega^\delta} \int_{\bdy \Omega} \frac{f(y)g(x)}{\abs{x - y}^{n - \alpha}}\; \d S_y\; \d x}
	\leq 
	C\norm{f}_{L^p(\bdy\Omega)}\norm{g}_{L^t(\Omega^\delta)}
\end{equation}
for all $f\in L^p(\bdy\Omega)$ and all $g\in L^t(\Omega^\delta)$. Consequently, for any such $\alpha, p$ and $\delta$, if $q$ satisfies
\begin{equation*}
	\frac{n - 1}{n}\left(\frac 1 p - \frac{\alpha - 1}{n - 1}\right)
	< 
	\frac 1 q
	< 
	\frac 1 p
\end{equation*}
then there exists a positive constant $C = C(n, \alpha, p, q, \Omega, \delta)>0$ such that 
\begin{equation}
\label{eq:alphaSubcriticlExtension}
	\norm{E_\alpha f}_{L^q(\Omega^\delta)}
	\leq
	C\norm{f}_{L^p(\bdy\Omega)}
\end{equation}
for all $f\in L^p(\bdy \Omega)$.
\end{lem}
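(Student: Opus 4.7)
The plan is to establish the HLS-type inequality \eqref{eq:alphaSubcriticalHLS} first, then derive the extension inequality \eqref{eq:alphaSubcriticlExtension} by duality. Since \eqref{eq:alphaSubcriticalExponents} is strict, I introduce the critical companion $t_c\in(1,\infty)$ determined by $\frac{1}{t_c}+\frac{n-1}{np}+\frac{n-\alpha+1}{n}=2$ (the range $1<p<(n-1)/(\alpha-1)$ ensures $1<t_c<\infty$), and note that the subcritical hypothesis forces $t_c<t$. Since $\Omega^\delta$ has finite measure, H\"older's inequality gives $\norm{g}_{L^{t_c}(\Omega^\delta)}\leq |\Omega^\delta|^{1/t_c-1/t}\norm{g}_{L^t(\Omega^\delta)}$, reducing the problem to the critical HLS inequality with $t$ replaced by $t_c$. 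By duality this critical-level inequality is equivalent to the extension bound
\begin{equation*}
	\norm{E_\alpha f}_{L^{q_c}(\Omega^\delta)}\leq C\norm{f}_{L^p(\bdy\Omega)}, \qquad q_c:=t_c'.
\end{equation*}

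To prove the critical extension bound I would mimic the proof of Proposition \ref{prop2-1} with two key adjustments: cover $\overline{\Omega^\delta}$ (rather than $\overline{\Omega}$) by finitely many balls, and apply Lemma \ref{lemma:TwoSidedLocalExtensionBound} (which, unlike Lemma \ref{epsilon_lemma}, bounds $E_\alpha$ on the full ball and is designed precisely for the situation where the ambient domain extends on both sides of $\bdy\Omega$) in place of Lemma \ref{epsilon_lemma} at each boundary-centered ball. For balls bounded away from $\bdy\Omega$ the kernel $|x-y|^{-(n-\alpha)}$ is uniformly bounded and a trivial estimate applies. The commutator terms $\rho_i E_\alpha f-E_\alpha(\rho_i f)$ are controlled exactly as in \eqref{eq:ExtensionNormDecompositionMinorTerm}, generating an error involving the less singular operator $E_{\alpha+1}|f|$; the net outcome is
\begin{equation*}
	\norm{E_\alpha f}_{L^{q_c}(\Omega^\delta)}^p \leq C\norm{f}_{L^p(\bdy\Omega)}^p + C\norm{E_{\alpha+1}|f|}_{L^{q_c}(\Omega^\delta)}^p.
\end{equation*}

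The lower-order term is then absorbed by a finite bootstrap. When $p>(n-1)/\alpha$ the kernel $|x-y|^{-(n-\alpha-1)p'}$ is integrable on $\bdy\Omega$ uniformly in $x\in\Omega^\delta$, so a direct H\"older application yields $\norm{E_{\alpha+1}|f|}_{L^\infty(\Omega^\delta)}\leq C\norm{f}_{L^p(\bdy\Omega)}$. If instead $p\leq (n-1)/\alpha$, I would iterate the same partition-of-unity argument with $\alpha$ replaced by $\alpha+1,\alpha+2,\ldots$, using H\"older on bounded $\Omega^\delta$ to relate the different $L^{q_c^{(k)}}$ norms that appear at successive iterations; since each step decreases the kernel exponent $n-\alpha-k$ by one and $p>1$, after finitely many iterations the exponent falls below $(n-1)/p'$ (or becomes nonpositive), at which point the direct pointwise bound applies. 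Finally \eqref{eq:alphaSubcriticlExtension} follows from \eqref{eq:alphaSubcriticalHLS} by duality: for $q$ in the stated range set $t=q'$; the upper bound $\frac{1}{q}<\frac{1}{p}$ is exactly $\frac{1}{t}+\frac{1}{p}>1$ and the lower bound matches \eqref{eq:alphaSubcriticalExponents}, so $(p,t)$ satisfies the HLS hypotheses, and
\begin{equation*}
	\norm{E_\alpha f}_{L^q(\Omega^\delta)}=\sup_{\norm{g}_{L^t(\Omega^\delta)}\leq 1}\left|\int_{\Omega^\delta}g\,E_\alpha f\,\d x\right|
\end{equation*}
combined with \eqref{eq:alphaSubcriticalHLS} gives the extension bound. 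The main obstacle will be the bootstrap control of $\norm{E_{\alpha+1}|f|}_{L^{q_c}(\Omega^\delta)}$, where one must carefully track the shifted parameters through successive iterations and confirm that Proposition \ref{prop2-1} at $\alpha+k$ applies at each intermediate step before the direct pointwise estimate takes over.
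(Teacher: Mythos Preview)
Your approach is valid in outline but takes a substantially different route from the paper.  The paper proves Lemma~\ref{lemma:SubcriticalHLStype} by a direct, self-contained Young-type (three-function H\"older) argument: it introduces $r>1$ with $\frac1p+\frac1t+\frac1r=2$, splits the integrand as $\gamma_1\gamma_2\gamma_3$ with $\gamma_1=g^{t/p'}h^{1-a}$, $\gamma_2=f^{p/t'}h^a$, $\gamma_3=g^{t/r'}f^{p/r'}$ (where $h(x,y)=|x-y|^{\alpha-n}$), and chooses $a\in(0,1)$ so that the marginal integrals $\int_{\bdy\Omega}h^{(1-a)p'}\,\d S_y$ and $\int_{\Omega^\delta}h^{at'}\,\d x$ are both finite.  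The strict subcriticality \eqref{eq:alphaSubcriticalExponents} is exactly what furnishes such an $a$.  No partition of unity, no iteration, no reference to Lemmas~\ref{epsilon_lemma}--\ref{lemma:TwoSidedLocalExtensionBound} are needed.

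Your plan instead reduces to the \emph{critical} extension bound on $\Omega^\delta$ (which is Lemma~\ref{lemma:FattenedOmegaExtensionBound}) and then recovers the subcritical statement by H\"older and duality.  In effect you reverse the paper's logical order: the paper proves Lemma~\ref{lemma:SubcriticalHLStype} first and then \emph{uses} it inside the proof of Lemma~\ref{lemma:FattenedOmegaExtensionBound} to control the commutator term $\norm{E_{\alpha+1}|f|}_{L^q(\Omega^\gamma)}$; you propose to establish Lemma~\ref{lemma:FattenedOmegaExtensionBound} independently via a bootstrap on the order $\alpha\to\alpha+1\to\cdots$, and then read off Lemma~\ref{lemma:SubcriticalHLStype} as a corollary.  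This works, but is considerably heavier, and the bootstrap carries the borderline hazard you flag: at each step $k$ the dichotomy ``either $p>(n-1)/(\alpha+k)$ so a direct H\"older bound gives $E_{\alpha+k+1}|f|\in L^\infty$, or $p<(n-1)/(\alpha+k)$ so Lemma~\ref{lemma:TwoSidedLocalExtensionBound} applies at level $\alpha+k+1$'' leaves the case $p=(n-1)/(\alpha+k)$ uncovered.  This is fixable (e.g.\ by exploiting $|\rho_i(x)-\rho_i(y)|\leq C|x-y|^{1-\epsilon}$ to replace $E_{\alpha+1}$ by $E_{\alpha+1-\epsilon}$ and dodge the borderline), but it is extra work that the paper's direct argument avoids entirely.
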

\begin{proof}
It suffices to prove the lemma under the additional assumption that $f$ and $g$ are nonnegative. By our assumptions on $\alpha, p$ and $t$ we have $1< \frac 1 t + \frac 1 p <2$. Let $r>1$ satisfy $\frac 1 p + \frac 1 t + \frac 1 r =2$ and choose $a= a(n, \alpha, p, t)\in (0,1)$ such that $1 - \frac{n - 1}{(n - \alpha)p'}< a< \frac{n}{(n - \alpha)t'}$ (such $a$ exists by assumption \eqref{eq:alphaSubcriticalExponents}). For $\delta>0$ set $\cal N_\delta(\bdy \Omega) =\{x\in \bb R^n: {\rm dist}(x, \bdy \Omega)< \delta\}$. By smoothness of $\bdy \Omega$ we may choose $0< \delta_0< 1$ sufficiently small such that for all $x\in \cal N_{\delta_0}(\bdy \Omega)$ there is a unique $x^*\in \bdy \Omega$ such that ${\rm dist}(x, \bdy \Omega) = \abs{x^* - x}$. Fix any $0\leq \delta< \delta_0<1$, any $0\leq f\in L^p(\bdy \Omega)$ and any $0\leq g\in L^t(\Omega^\delta)$ and define 
\begin{eqnarray*}
	\gamma_1(x,y) & = & g^{\frac{t}{p'}}(x) h^{1 - a}(x,y)\\
	\gamma_2(x,y) & = & f^{\frac{p}{t'}}(y) h^a(x,y)\\
	\gamma_3(x,y) & = & g^{\frac{t}{r'}}(x) f^{\frac{p}{r'}}(y),  
\end{eqnarray*}
where $h(x,y) = \abs{x - y}^{\alpha - n}$. By H\"older's inequality we have
\begin{eqnarray}
\label{eq:ApplyHolderForYoungs}
	\int_{\Omega^\delta} \int_{\bdy\Omega} f(y)g(x) h(x,y)\; \d S_y \; \d x
	& = & 
	\int_{\Omega^\delta}\int_{\bdy \Omega} \gamma_1(x,y)\gamma_2(x,y) \gamma_3(x,y)\; \d S_y\; \d x
	\\
	& \leq & 
	\norm{\gamma_1}_{L^{p'}(\Omega^\delta \times \bdy \Omega)}\norm{\gamma_2}_{L^{t'}(\Omega^\delta \times \bdy \Omega)}
	\norm{\gamma_3}_{L^{r'}(\Omega^\delta \times \bdy \Omega)}
	\notag
	\\
	& = & 
	\norm{\gamma_1}_{L^{p'}(\Omega^\delta \times \bdy \Omega)}\norm{\gamma_2}_{L^{t'}(\Omega^\delta \times \bdy \Omega)}
	\norm{g}_{L^t(\Omega^\delta)}^{\frac{t}{r'}}\norm{f}_{L^p(\bdy\Omega)}^{\frac{p}{r'}}. 
	\notag
\end{eqnarray}
To estimate $\norm{\gamma_1}_{L^{p'}(\Omega^\delta \times \bdy \Omega)}$, note that for any $x\in \cal N_\delta(\bdy \Omega)$ and any $y\in \bdy \Omega$, 
\begin{equation*}
	\abs{x^* - y}
	\leq
	\abs{x^*- x}+ \abs{x - y}
	\leq
	2\abs{x - y}. 
\end{equation*}
Therefore, for all $x\in \cal N_\delta(\bdy\Omega)$
\begin{eqnarray*}
	\int_{\bdy \Omega} h(x,y)^{p'(1 - a)}\; \d S_y
	& \leq & 
	C(n, a) \int_{\bdy \Omega} \abs{x^* - y}^{-(n - \alpha)p'(1 -a)}\; \d S_y
	\\
	& \leq & 
	C(n,\alpha, p,t, \Omega), 
\end{eqnarray*}
the final inequality holding as our choice of $a$ guarantees that $(n - \alpha)p'(1 - a)< n - 1$. If $x\in \Omega^\delta \setminus \cal N_\delta(\bdy \Omega)$ then 
\begin{equation*}
	\int_{\bdy \Omega} h(x,y)^{p'(1 - a)}\; \d S_y
	\leq
	\delta^{-(n - \alpha)p'(1 -a)}\abs{\bdy \Omega}. 
\end{equation*}
Combining this with the previous estimate we obtain a constant $C = C(n, \alpha, p, t, \Omega, \delta)>0$ such that
\begin{eqnarray}
\label{eq:gamma1Estimate}
	\norm{\gamma_1}_{L^{p'}(\Omega^\delta \times \bdy \Omega)}^{p'}
	& = & 
	\int_{\cal N_\delta(\bdy \Omega)} \abs{g(x)}^t\int_{\bdy \Omega} h(x,y)^{(1 -a)p'}\; \d S_y\; \d x
	\notag
	\\
	& & 
	+
	\int_{\Omega^\delta\setminus\cal N_\delta(\bdy \Omega)} \abs{g(x)}^t\int_{\bdy \Omega} h(x,y)^{(1 -a)p'}\; \d S_y\; \d x
	\\
	& \leq & 
	C\norm{g}_{L^t(\Omega^\delta)}^t. 
	\notag
\end{eqnarray}
To estimate $\norm{\gamma_2}_{L^{t'}(\Omega^\delta \times \bdy \Omega)}$ observe that for every $y\in \bdy \Omega$
\begin{equation*}
	\int_{\Omega^\delta} h(x,y)^{at'}\; \d x
	\leq
	\int_{B(y, {\rm diam}(\Omega)+ 1)}h(x,y)^{at'}\; \d x
	\leq
	C(n, \alpha, p, t, \Omega), 
\end{equation*}
the final inequality holding since our choice of $a$ guarantees that $(n - \alpha)at'< n$. Therefore, 
\begin{eqnarray}
\label{eq:gamma2Estimate}
	\norm{\gamma_2}_{L^{t'}(\Omega^\delta \times \bdy \Omega)}^{t'}
	& =& 
	\int_{\bdy \Omega} f^p(y)\int_{\Omega^\delta} h(x,y)^{at'}\; \d x \; \d S_y
	\notag
	\\
	& \leq & 
	C(n,\alpha, p,t, \Omega) \norm{f}_{L^p(\bdy \Omega)}^p. 
\end{eqnarray}
Using \eqref{eq:gamma1Estimate} and \eqref{eq:gamma2Estimate} in \eqref{eq:ApplyHolderForYoungs} we get \eqref{eq:alphaSubcriticalHLS}. The norm bound in \eqref{eq:alphaSubcriticlExtension} follows from \eqref{eq:alphaSubcriticalHLS} and Lebesgue duality. 
\end{proof}
The boundedness of $E_\alpha:L^p(\bdy\Omega) \to L^q(\Omega^\delta)$ for critical exponents $p, q$ follows by combining Lemmas \ref{lemma:TwoSidedLocalExtensionBound} and \ref{lemma:SubcriticalHLStype} with a partition of unity argument. 
\begin{lem}
\label{lemma:FattenedOmegaExtensionBound}
Let $1< \alpha< n$, let $1< p< (n - 1)/(\alpha - 1)$ and let $q$ be given by \eqref{eq:alphaCriticalExtensionExponents}. There exists $\delta_0>0$  such that for all $0\leq \delta< \delta_0$,  $E_\alpha$ is bounded from $L^p(\bdy \Omega)$ into $L^{q}(\Omega^\delta)$ and
\begin{equation*}
	\norm{E_\alpha f}_{L^{q}(\Omega^\delta)} \leq C(n,\alpha, p, \Omega, \delta) \norm{f}_{L^p(\bdy \Omega)}. 
\end{equation*}
In particular, the extension constant $\cal E_2(\Omega)$ in \eqref{eq:min} is well defined. 
\end{lem}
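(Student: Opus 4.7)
The plan is to reduce the global critical bound to a sum of local critical bounds (via Lemma \ref{lemma:TwoSidedLocalExtensionBound}) plus harmless nonlocal contributions controlled by a bounded kernel. First, for each $y\in \bdy\Omega$, Lemma \ref{lemma:TwoSidedLocalExtensionBound} supplies a radius $\delta(y)>0$ such that any $g$ with $\supp g\subset\subset \bdy\Omega\cap B_{\delta(y)}(y)$ satisfies $\norm{E_\alpha g}_{L^q(B_{\delta(y)}(y))}\le C\norm{g}_{L^p(\bdy\Omega)}$. Using compactness of $\bdy\Omega$, I would extract finitely many points $y^1,\ldots,y^M\in\bdy\Omega$ and radii $\delta_i=\delta(y^i)$ so that the half-radius balls $\{B_{\delta_i/2}(y^i)\}_{i=1}^M$ already cover $\bdy\Omega$; the factor of $1/2$ forces the supports of the eventual partition of unity to sit strictly inside the larger balls $B_{\delta_i}(y^i)$. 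At the same time I would fix a threshold $\delta_0>0$ small enough that $\Omega^{\delta_0}$ has bounded measure and lies inside a fixed compact set.

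Next, I would fix a smooth partition of unity $\{\rho_i\}_{i=1}^M$ on $\bdy\Omega$ subordinate to the refined cover $\{B_{\delta_i/2}(y^i)\}$, and for $0\le \delta<\delta_0$ and $f\in L^p(\bdy\Omega)$ decompose $f=\sum_i \rho_i f$. By the triangle inequality it suffices to bound each $\norm{E_\alpha(\rho_i f)}_{L^q(\Omega^\delta)}$. For fixed $i$ I would split the target domain as $\Omega^\delta=(\Omega^\delta\cap B_{\delta_i}(y^i))\cup(\Omega^\delta\setminus B_{\delta_i}(y^i))$. On the near piece, since $\supp(\rho_i f)\subset\subset \bdy\Omega\cap B_{\delta_i}(y^i)$, Lemma \ref{lemma:TwoSidedLocalExtensionBound} gives $\norm{E_\alpha(\rho_i f)}_{L^q(B_{\delta_i}(y^i))}\le C\norm{\rho_i f}_{L^p(\bdy\Omega)}\le C\norm{f}_{L^p(\bdy\Omega)}$ with constant independent of $\delta$.

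On the far piece, for $x\in\Omega^\delta\setminus B_{\delta_i}(y^i)$ and $y\in\supp\rho_i\subset B_{\delta_i/2}(y^i)$, the triangle inequality yields $\abs{x-y}\ge \delta_i/2$, so the truncated kernel $\abs{x-y}^{\alpha-n}\chi_{\{\abs{x-y}\ge\delta_i/2\}}$ is uniformly bounded. A direct H\"older estimate (exploiting finiteness of $\abs{\Omega^{\delta_0}}$ and $\abs{\bdy\Omega}$), or equivalently the subcritical HLS-type estimate of Lemma \ref{lemma:SubcriticalHLStype} applied to this nonsingular integral kernel, yields $\norm{E_\alpha(\rho_i f)}_{L^q(\Omega^\delta\setminus B_{\delta_i}(y^i))}\le C\norm{f}_{L^p(\bdy\Omega)}$. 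Summing the near and far contributions over $i=1,\ldots,M$ proves the stated inequality. Well-definedness of $\cal E_2(\Omega)$ follows by specializing to $\alpha=2$, $p=2(n-1)/n$, $q=2^*$ and $\delta=0$, which falls within the allowed parameter range since $p=2(n-1)/n<n-1=(n-1)/(\alpha-1)$.

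I do not anticipate a serious obstacle; the only point requiring care is the compatibility of the various constants in step one, namely choosing the partition radii $\delta_i$, the refined cover, and the threshold $\delta_0$ so that the near-field constant from Lemma \ref{lemma:TwoSidedLocalExtensionBound} and the far-field H\"older constant are uniform in $\delta\in[0,\delta_0)$. This is automatic because shrinking $\delta_0$ only shrinks the target $\Omega^\delta$, so neither estimate degrades; the partition of unity and the radii $\delta_i$ are fixed once and for all at the outset, independent of $\delta$.
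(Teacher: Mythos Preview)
Your proposal is correct, but the route differs from the paper's. The paper places the partition of unity on the \emph{target} $\Omega^\gamma$ (with $\sum_i\rho_i^p\equiv 1$), writes $\rho_i E_\alpha f = E_\alpha(\rho_i f) + \bigl(\rho_i E_\alpha f - E_\alpha(\rho_i f)\bigr)$, and bounds the commutator pointwise by $\norm{\Grad\rho_i}_\infty\, E_{\alpha+1}\abs{f}$; the gain of one order in the kernel puts $E_{\alpha+1}$ into the subcritical range, so Lemma~\ref{lemma:SubcriticalHLStype} closes the estimate. You instead place the partition of unity on the \emph{source} $\bdy\Omega$, decompose $f=\sum_i\rho_i f$, and split the target geometrically into a near ball (handled by Lemma~\ref{lemma:TwoSidedLocalExtensionBound}) and a far region where the kernel is uniformly bounded by $(\delta_i/2)^{\alpha-n}$, so a crude H\"older bound suffices. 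Your argument is more elementary for this lemma, since it avoids the commutator device and the appeal to $E_{\alpha+1}$ altogether; the paper's device, on the other hand, is exactly what is needed in Proposition~\ref{prop2-1}, where one must localize in the target to extract the sharp constant $\cal C_\alpha(n,p)+\epsilon$, so the paper is simply reusing that machinery here. One small wording issue: Lemma~\ref{lemma:SubcriticalHLStype} concerns the singular kernel, not a truncated one, so your far-piece bound should just be the direct H\"older estimate you already describe rather than an invocation of that lemma.
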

\begin{proof}
It suffices to prove the lemma under the additional assumption that $f\geq 0$. By Lemma \ref{lemma:TwoSidedLocalExtensionBound} and compactness of $\bdy \Omega$ we may choose $\delta>0$ such that for all $y\in \bdy \Omega$ and all $f\in L^p(\bdy \Omega)$ having ${\rm supp}\; f\subset\subset B_\delta(y)\cap \bdy \Omega$, 
\begin{equation*}
	\norm{E_\alpha f}_{L^q(B_\delta(y))}
	\leq
	C(n,\alpha, p)\norm{f}_{L^p(\bdy \Omega \cap B_\delta(y))}. 
\end{equation*}
Let $\{B_\delta(y^i)\}_{i = 1}^{M +N}$ be an open cover of $\overline\Omega$ such that for each $i$ either $y^i\in \bdy \Omega$ or $B_\delta(y^i)\cap \bdy \Omega = \emptyset$. For notational convenience we write $B^i = B_\delta(y^i)$. After reindexing if necessary we may assume that $y^i\in \bdy \Omega$ for $i = 1, \cdots, M$ and $B^i\cap \bdy \Omega = \emptyset $ for $i = M + 1, \cdots, M + N$. Choose $\gamma>0$ sufficiently small so that $\Omega^\gamma \subset \bigcup_{i = 1}^{M + N} B^i$.  Let $\{\rho_i\}_{i = 1}^{M + N}$ be a smooth partition of unity subordinate to $\{B^i\}$ satisfying both $0\leq \rho_i(x)\leq 1$ and $\sum_{i = 1}^{M + N} \rho_i^p(x) = 1$ for all $x\in \overline {\Omega^\gamma}$. Computing similarly to \eqref{eq:ExtensionNormDecomposition}, for any $0\leq f\in L^p(\bdy \Omega)$ we have
\begin{equation}
\label{eq:TwoSidedExtensionNormDecomposition}
	\norm{E_\alpha f}_{L^q(\Omega^\gamma)}^p
	\leq 
	\sum_{i = 1}^{M + N}
	\left(
	\norm{E_\alpha(\rho_i f)}_{L^q(\Omega^\gamma\cap \supp\rho_i)} 
	+ 
	\norm{\rho_iE_\alpha f - E_\alpha(\rho_i f)}_{L^q(\Omega^\gamma \cap \supp \rho_i)}
	\right)^p. 
\end{equation}
After decreasing $\gamma$ if necessary an application of Lemma \ref{lemma:SubcriticalHLStype} guarantees that for every $i = 1, \cdots, M + N$ 
\begin{eqnarray}
\label{eq:TwoSidedExtensionNormDecompositionMinorTerm}
	\norm{\rho_iE_\alpha f - E_\alpha (\rho_i f)}_{L^q(\Omega^\gamma \cap \supp \rho_i)}^q
	& \leq & 
	\max_i\norm{\Grad \rho_i}_{L^\infty(\overline{ \Omega^\gamma})}^q
	\norm{E_{\alpha + 1}f}_{L^q(\Omega^\gamma\cap \supp\rho_i)}^q
	\notag
	\\
	& \leq & 
	C\norm{f}_{L^p(\bdy\Omega)}^q
\end{eqnarray}
for some constant $C>0$ depending on $n, \alpha, p, \Omega, \gamma$ and $\{B_i\}$. Moreover since $\supp(\rho_i f)\subset\subset B^i$, by Lemma \ref{lemma:TwoSidedLocalExtensionBound} and the choice of $\delta$ for every $i = 1, \cdots, M$, we have 
\begin{equation}
\label{eq:UseTwoSidedExtensionLemma}
	\norm{E_\alpha(\rho_i f)}_{L^q(\Omega^\gamma\cap \supp \rho_i)}
	\leq
	C(n,\alpha, p)\norm{\rho_i f}_{L^p(\bdy \Omega)}, 
\end{equation}
while $E_\alpha(\rho_i f) = 0$ for $i = M + 1, \cdots, M + N$. Using estimates \eqref{eq:TwoSidedExtensionNormDecompositionMinorTerm} and \eqref{eq:UseTwoSidedExtensionLemma} in \eqref{eq:TwoSidedExtensionNormDecomposition} gives a constant $C(n, \alpha, p, \Omega, \delta)$ such that 
\begin{eqnarray*}
	\norm{E_\alpha f}_{L^q(\Omega^\gamma)}^p
	& \leq & 
	\displaystyle
	C\left( \sum_{i = 1}^M\norm{\rho_i f}_{L^p(\bdy \Omega)}^p
	+ 
	\sum_{i = 1}^{M + N} \norm{E_{\alpha + 1} f}_{L^q(\Omega^\gamma\cap \supp \rho_i)}^p\right)
	\\
	& \leq & 
	\displaystyle
	C\norm{f}_{L^p(\bdy\Omega)}^p. 
\end{eqnarray*}
\end{proof}
Consider the restriction operator $R_\alpha$ defined by 
\begin{equation}
\label{eq:AlphaRestrictionOperatorBoundedDomain}
	R_\alpha g(y)
	=
	\int_\Omega \frac{g(x)}{\abs{x - y}^{n - \alpha}}\; \d x
	\qquad
	y\in \bdy \Omega. 
\end{equation}
From Lemma \ref{lemma:FattenedOmegaExtensionBound} and Lebesgue duality we get the following estimates. 
\begin{crl}
\label{crl:OmegaRestrictionInequality}
Let $1< \alpha< n$. 
\begin{enumerate}[(a)]
	\item Suppose $1< p, t< \infty$ satisfy \eqref{eq:UHSptCritical}. 
	For all $0\leq \delta$ sufficiently small there is a positive constant $C = C(n, \alpha, p, \Omega, \delta)$ such 		that for all $f\in L^p(\bdy \Omega)$ and all $g\in L^t(\Omega^\delta)$, 
	\begin{equation*}
		\abs{\int_{\Omega^\delta}\int_{\bdy \Omega} \frac{f(y)g(x)}{\abs{x- y}^{n - \alpha}}\; \d S_y\; \d x}
		\leq
		C \norm{f}_{L^p(\bdy\Omega)}\norm{g}_{L^t(\Omega^\delta)}. 
	\end{equation*}
	\item Suppose $1< t< \frac n \alpha$ and let $r$ be given by 
	\begin{equation*}
	\frac 1 r = \frac{n}{n - 1}\left(\frac 1t - \frac \alpha n\right). 
	\end{equation*}
	There exists $\delta_0>0$ such that for all $0\leq \delta < \delta_0$ the map $R_2:L^t(\Omega^\delta)\to 			L^r(\bdy \Omega)$ is bounded with 
	\begin{equation*}
		\norm{R_\alpha g}_{L^r(\bdy \Omega)}
		\leq
		C(n, t, \Omega, \delta)\norm{g}_{L^t(\Omega^\delta)}. 
	\end{equation*}
	\item When $\delta = 0$, $\alpha = 2$, $p = 2(n-1)/n$ and $t = 2n/(n + 2)$ the optimal constant in each of the 		inequalities of parts (a) and (b) is $\cal E_2(\Omega)$ as defined in \eqref{eq:min}. 
\end{enumerate}
\end{crl}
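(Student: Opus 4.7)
The plan is to derive all three parts of the corollary from Lemma \ref{lemma:FattenedOmegaExtensionBound} together with the standard $L^p$–$L^{p'}$ duality pairing between the extension operator $E_\alpha:L^p(\bdy\Omega)\to L^{t'}(\Omega^\delta)$ and its adjoint, which is the restriction operator $R_\alpha$ (extended to act on $L^t(\Omega^\delta)$ by integrating over $\Omega^\delta$ instead of $\Omega$).

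For part (a), the key is the algebraic identity: under the scaling relation \eqref{eq:UHSptCritical}, the dual exponent $t'$ equals precisely the exponent $q$ coming from the extension formula \eqref{eq:alphaCriticalExtensionExponents} with $p$ in the allowed range. I would verify by direct computation that
\begin{equation*}
\frac{1}{t'} \;=\; 1 - \frac{1}{t} \;=\; \frac{n-1}{np} - \frac{\alpha-1}{n} \;=\; \frac{n-1}{n}\Bigl(\frac{1}{p} - \frac{\alpha-1}{n-1}\Bigr).
\end{equation*}
Given this, Lemma \ref{lemma:FattenedOmegaExtensionBound} yields $\|E_\alpha f\|_{L^{t'}(\Omega^\delta)} \le C\|f\|_{L^p(\bdy\Omega)}$, and pairing with $g\in L^t(\Omega^\delta)$ via H\"older gives the bilinear estimate in (a). I must also check that the range $1<p<(n-1)/(\alpha-1)$ required by Lemma \ref{lemma:FattenedOmegaExtensionBound} is compatible with any $1<p,t<\infty$ satisfying \eqref{eq:UHSptCritical}; if $p$ lies outside this range one gets it by duality, applying the lemma with $p$ and $t$ swapped via the restriction formulation in part (b).

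For part (b), I would set $p=r'$ and verify (again by direct algebra) that the scaling law $\frac{1}{r}=\frac{n}{n-1}(\frac{1}{t}-\frac{\alpha}{n})$ is exactly \eqref{eq:alphaCriticalExtensionExponents} rewritten. The constraint $1<t<n/\alpha$ translates into $\frac{n-1}{n-\alpha}<r<\infty$, equivalently $1<r'<(n-1)/(\alpha-1)$, so Lemma \ref{lemma:FattenedOmegaExtensionBound} applies and gives $E_\alpha:L^{r'}(\bdy\Omega)\to L^{t'}(\Omega^\delta)$ bounded. Then Fubini produces the adjoint identity
\begin{equation*}
\int_{\Omega^\delta}\!\!\!\int_{\bdy\Omega}\frac{f(y)g(x)}{|x-y|^{n-\alpha}}\,\d S_y\,\d x
\;=\;\int_{\bdy\Omega} f(y)\,R_\alpha g(y)\,\d S_y,
\end{equation*}
and taking the supremum over $f\in L^{r'}(\bdy\Omega)$ with unit norm, Lebesgue duality converts the extension bound into the restriction bound $\|R_\alpha g\|_{L^r(\bdy\Omega)} \le C\|g\|_{L^t(\Omega^\delta)}$.

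For part (c), observe that in the conformal case $\delta=0$, $\alpha=2$, $p=2(n-1)/n$, $t=2n/(n+2)$ the formulas give $t'=2n/(n-2)=2^{*}$, and the scaling relation in (b) gives $r=2(n-1)/(n-2)$, $r'=2(n-1)/n=p$. Thus the extension and restriction inequalities are adjoint instances of the \emph{same} operator $E_2:L^{2(n-1)/n}(\bdy\Omega)\to L^{2n/(n-2)}(\Omega)$, whose operator norm is by definition $\cal E_2(\Omega)$. Since the operator norm of a bounded linear map between $L^p$ spaces equals the operator norm of its Banach-space adjoint, the sharp constants in (a) and (b) coincide and both equal $\cal E_2(\Omega)$.

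The only real subtlety, which is mostly bookkeeping rather than a substantive obstacle, is keeping track of the several conjugate exponents and ensuring that the hypotheses $1<p<(n-1)/(\alpha-1)$ of Lemma \ref{lemma:FattenedOmegaExtensionBound} are met in each application; in the borderline cases one recovers the inequality from the dual formulation rather than directly.
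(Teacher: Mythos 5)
Your proposal is correct and is precisely the argument the paper intends: the paper's entire "proof" is the single remark that the corollary follows from Lemma~\ref{lemma:FattenedOmegaExtensionBound} and Lebesgue duality, and you have filled in exactly those exponent computations and adjoint identities. One small remark: the constraint $1<p<(n-1)/(\alpha-1)$ is in fact automatic for any pair $1<p,t<\infty$ satisfying \eqref{eq:UHSptCritical} (solving for $1/t$ and imposing $0<1/t<1$ forces it), so the fallback you raise for a borderline case never arises and could be dropped.
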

\begin{lem}
\label{lemma:SubcriticalExtensionCompactness}
Let $\Omega \subset \bb R^n$ be a smooth bounded domain. For any $1< q< 2^*$, the extension operator $E_2:L^{2(n-1)/n}(\bdy\Omega) \to L^q(\Omega)$ is compact. 
\end{lem}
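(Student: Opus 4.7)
The plan is to combine weak sequential compactness in $L^{2(n-1)/n}(\bdy \Omega)$ with the Vitali convergence theorem, using the critical $L^{2^*}$ estimate already supplied by Lemma \ref{lemma:FattenedOmegaExtensionBound}. By linearity of $E_2$ it suffices to take a bounded sequence $\{f_k\}\subset L^{p}(\bdy\Omega)$ with $p = 2(n-1)/n$ and extract a subsequence along which $E_2 f_k$ converges in $L^q(\Omega)$.

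First, since $p>1$ the space $L^p(\bdy\Omega)$ is reflexive, so some subsequence (still labeled $\{f_k\}$) satisfies $f_k \weakconv f$ in $L^p(\bdy\Omega)$ for some $f\in L^p(\bdy\Omega)$. For every $x\in \Omega$ the kernel $y\mapsto \abs{x-y}^{-(n-2)}$ is continuous on the compact set $\bdy\Omega$, hence bounded, hence lies in $L^{p'}(\bdy\Omega)$. Testing the weak convergence against this kernel yields pointwise convergence
\begin{equation*}
    E_2 f_k(x) \longrightarrow E_2 f(x) \qquad \text{for every } x\in \Omega.
\end{equation*}

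Second, I upgrade pointwise convergence to $L^q$ convergence via Vitali's theorem. Tightness is automatic because $\Omega$ is bounded. For equiintegrability of the family $\{\abs{E_2 f_k}^q\}$, Lemma \ref{lemma:FattenedOmegaExtensionBound} furnishes a uniform bound $\norm{E_2 f_k}_{L^{2^*}(\Omega)}\leq C$. Since $q< 2^* = 2n/(n-2)$, H\"older's inequality gives, for any measurable $A\subset \Omega$,
\begin{equation*}
    \int_A \abs{E_2 f_k}^q\, \d x
    \leq
    \abs{A}^{1 - q/2^*}\norm{E_2 f_k}_{L^{2^*}(\Omega)}^q,
\end{equation*}
which tends to zero uniformly in $k$ as $\abs{A}\to 0$. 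Vitali's convergence theorem then yields $E_2 f_k \to E_2 f$ in $L^q(\Omega)$, establishing compactness.

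No serious obstacle is anticipated: all of the analytic difficulty has already been absorbed into Lemma \ref{lemma:FattenedOmegaExtensionBound}, whose critical $L^{2^*}$ bound is precisely the ingredient that drives equiintegrability at every subcritical exponent $q<2^*$. The only point requiring care is that the pointwise identification of the weak limit requires the kernel to be bounded on $\bdy\Omega$, which is why the argument gives convergence for every interior $x\in \Omega$ and not merely almost everywhere.
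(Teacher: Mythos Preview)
Your argument is correct and considerably more economical than the paper's own proof. The paper proceeds via a Friedrichs--Arzel\`a--Ascoli scheme: it localizes with a partition of unity, mollifies $h_m=\rho_i E_2 f_m$ to $h_m^\epsilon$, proves in Step~1 (through a careful splitting into four integrals $I_1,\dots,I_4$ and the subcritical bound for $E_{3/2}$) that $\norm{h_m^\epsilon - h_m}_{L^q}\to 0$ uniformly in $m$, and in Step~2 that for each fixed $\epsilon$ the family $(h_m^\epsilon)_m$ is uniformly bounded and equicontinuous; a diagonal extraction then finishes.

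By contrast, you exploit two structural facts the paper does not use directly: reflexivity of $L^{2(n-1)/n}(\bdy\Omega)$ to obtain a weak limit $f$, and boundedness of the kernel $\abs{x-\cdot}^{2-n}$ on $\bdy\Omega$ for each interior $x\in\Omega$ to turn weak convergence into everywhere pointwise convergence of $E_2 f_k(x)$. The critical $L^{2^*}$ bound from Lemma~\ref{lemma:FattenedOmegaExtensionBound} then supplies exactly the uniform integrability needed for Vitali. What you gain is brevity and transparency; what the paper's approach buys is a self-contained compactness proof that does not invoke weak convergence at all and would adapt more readily to settings (e.g.\ $p=1$) where reflexivity fails. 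Both routes rest on the same analytic core, the boundedness of $E_2$ at the critical exponent, but your use of Vitali is the more efficient way to descend from that bound to subcritical compactness.
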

\begin{proof}
By Lemma \ref{lemma:FattenedOmegaExtensionBound} we may choose $\delta>0$ such that for all $1< \alpha< \frac{n + 2}{2}$ the extension operator $E_\alpha:L^{2(n-1)/n}(\bdy \Omega)\to L^{r}(\Omega^{2\delta})$ is bounded, where $r$ is given by
\begin{equation*}
	\frac 1 {r} = \frac{n + 2 - 2\alpha}{2n}.
\end{equation*}
Let $\{B_\delta(y^i)\}_{i = 1}^{M+N}$ be an open covering of $\Omega$ by charts for which $y^i\in \bdy \Omega$ for $i = 1, \ldots, M$ and $B_\delta(y^i)\cap \bdy \Omega = \emptyset$ for $i = M + 1, \ldots, M+N$. Let $\{\rho_i\}_{i = 1}^{M + N}$ be a smooth partition of unity subordinate to $\{B_\delta(y^i)\}$ for which both $0\leq \rho_i\leq 1$ and $\sum_{i = 1}^{M +N} \rho_i \equiv 1$.  To prove the lemma it suffices to show that for every $i = 1, \ldots, M+N$ and every bounded sequence $(f_m)_{m = 1}^\infty\subset L^{2(n-1)/n}(\bdy \Omega)$ there is a subsequence of $\rho_iE_2f_m$ which converges in $L^q(\Omega)$.  
For the remainder of the proof of Lemma \ref{lemma:SubcriticalExtensionCompactness} we consider fixed $i\in \{1, \ldots, M +N\}$. Let $(f_m)_{m = 1}^\infty$ be bounded in $L^{2(n-1)/n}(\bdy \Omega)$. We assume with no loss of generality that $\norm{f_m}_{L^{2(n-1)/n}(\bdy\Omega)}\leq 1$ for all $m$. For notational convenience we set $B^i =B_\delta(y^i)$. For $x\in B^i$ define 
\begin{equation*}
	h_{m,i}(x) 
	= 
	h_m(x) 
	= 
	\rho_i(x) E_2f_m(x). 
\end{equation*}
For $\epsilon< \frac 14{\rm dist}(\supp \rho_i, \bdy B^i)$ define 
\begin{equation*}
	h_m^\epsilon(x)
	= 
	\eta_\epsilon* h_m(x)
	= 
	\int_{B_\epsilon}\eta_\epsilon(y) h_m(x -y)\; \d y, 
\end{equation*}
where $\eta_\epsilon$ is the standard mollifier. See for example \cite{Evans1998} page 629. \\
{\bf Step 1:} We show that $\norm{h_m^\epsilon- h_m}_{L^q(\supp \rho_i)}\to 0$ as $\epsilon \to 0$ uniformly in $m$. \\
First note that by Lemma \ref{lemma:FattenedOmegaExtensionBound}, $(h_m)_m$ is bounded in $L^{2^*}(\Omega^\delta)$. Moreover, H\"older's inequality gives
\begin{eqnarray*}
	\abs{h_m^\epsilon(x)}
	& \leq & 
	\left(\int_{B_\epsilon(x)}\eta_\epsilon(x - z)\; \d z\right)^{\frac{n + 2}{2n}}
	\left(\int_{B_\epsilon(x)}\eta_\epsilon(x - z)\abs{h_m(z)}^{2^*}\; \d z\right)^{\frac{n - 2}{2n}}
	\\
	& = & 
	\left(\int_{B_\epsilon(x)}\eta_\epsilon(x - z)\abs{h_m(z)}^{2^*}\; \d z\right)^{\frac{n - 2}{2n}}. 
\end{eqnarray*}
Therefore, 
\begin{eqnarray*}
	\int_\Omega \abs{h_m^\epsilon(x)}^{2^*}\; \d x
	& \leq & 
	\int_{B_\epsilon} \eta_\epsilon(z)\int_\Omega \abs{h_m(x - z)}^{2^*}\; \d x \; \d z
	\\
	& \leq & 
	\int_{B_\epsilon} \eta_\epsilon(z)\; \d z \cdot \int_{\Omega^\delta} \abs{h_m(x)}^{2^*}\; \d x
	\\
	& =  & 
	\int_{\Omega^\delta}\abs{h_m(x)}^{2^*}\; \d x. 
\end{eqnarray*}
Thus, $h_m^\epsilon$ is bounded in $L^{2^*}(\Omega)$. \\

Now, 
\begin{eqnarray*}
	\int_\Omega\abs{h_m^\epsilon(x) - h_m(x)}\; \d x
	& \leq & 
	\int_\Omega \int_{B_1}\eta_1(z)\abs{h_m(x - \epsilon z) - h_m(x)}\; \d z\; \d x
	\\
	&\leq & 
	I_1 + I_2 + I_3 + I_4, 
\end{eqnarray*}
where, with $D_1= D_1(x,z) = \{y\in B_{4\epsilon}(x): \abs{x - y}> \abs{x - \epsilon z - y}\}$ and with $D_2 = D_2(x, z) = B_{4\epsilon}(x) \setminus  D_1$, 
\begin{eqnarray*}
	I_1 
	& = & 
	\int_\Omega \int_{B_1}\int_{\bdy \Omega \cap D_1}
	\eta_1(z)\rho_i(x) \abs{f_m(y)}\abs{ \abs{x - \epsilon z - y}^{2-n} - \abs{x - y}^{2-n}}\; \d S_y\; \d z\; \d x
	\\
	I_2 
	& = & 
	\int_\Omega \int_{B_1}\int_{\bdy \Omega \cap D_2}
	\eta_1(z)\rho_i(x) \abs{f_m(y)}\abs{ \abs{x - \epsilon z - y}^{2-n} - \abs{x - y}^{2-n}}\; \d S_y\; \d z\; \d x
	\\
	I_3 
	& = & 
	\int_\Omega \int_{B_1}\int_{\bdy \Omega \setminus B_{4\epsilon}(x)}
	\eta_1(z)\rho_i(x) \abs{f_m(y)}\abs{ \abs{x - \epsilon z - y}^{2-n} - \abs{x - y}^{2-n}}\; \d S_y\; \d z\; \d x
	\\
	I_4 
	& = & 
	\int_\Omega \int_{B_1}\eta_1(z) \abs{\rho_i(x - \epsilon z) - \rho_i(x)}
	\abs{E_2f_m(x - \epsilon z)}\; \d z \; \d x. 
\end{eqnarray*}
To estimate $I_1$ first note that for all $x\in \supp \rho_i$ and all $z\in B_1$, 
\begin{equation*}
\begin{array}{lcl}
	\multicolumn{3}{l}{
	\displaystyle
	\int_{\bdy \Omega \cap D_1} \abs{f_m(y)}\abs{\abs{x - \epsilon z - y}^{2-n} - \abs{x - y}^{2-n}}\; \d S_y
	}
	\\
	& \leq & 
	\displaystyle
	\int_{\bdy \Omega}\abs{f_m(y)}\abs{x - \epsilon z - y}^{2-n}\; \d S_y
	\\
	&\leq & 
	\displaystyle
	C(n) \sqrt \epsilon \left(E_{3/2}\abs{f_m}\right)(x - \epsilon z). 
\end{array}
\end{equation*}
Therefore, using H\"older's inequality and Lemma \ref{lemma:FattenedOmegaExtensionBound} we obtain 
\begin{eqnarray*}
	I_1
	& \leq & 
	C(n)\sqrt \epsilon \int_{B_1}\eta_1(z)\int_\Omega \left(E_{3/2}\abs{f_m}\right)(x - \epsilon z)\; \d x \; \d z
	\\
	&\leq & 
	C(n)\sqrt \epsilon \norm{E_{3/2}\abs{f_m}}_{L^1(\Omega^\delta)}
	\\
	&\leq & 
	C(n, \Omega, \delta)\sqrt \epsilon \norm{E_{3/2}\abs{f_m}}_{L^{2n/(n-1)}(\Omega^\delta)}
	\\
	&\leq & 
	C(n, \Omega, \delta)\sqrt \epsilon \norm{f_m}_{L^{2(n-1)/n}(\bdy\Omega)}. 
\end{eqnarray*}
By a similar computation we obtain 
\begin{equation*}
	I_2
	\leq
	C(n, \Omega)\sqrt \epsilon \norm{f_m}_{L^{2(n-1)/n}(\bdy\Omega)}. 
\end{equation*}
For the estimate of $I_3$ we first note that for all $\abs{x - y}\geq 4\epsilon$ and all $0< \abs z \leq 1$ we have
\begin{equation*}
	\abs{\abs{x - \epsilon z - y}^{2-n} - \abs{x - y}^{2-n}}
	\leq
	C(n) \epsilon \abs{x - y}^{1-n}
	\leq
	C(n)\sqrt \epsilon \abs{x - y}^{\frac 32 - n}. 
\end{equation*}
Therefore, using H\"older's inequality and Lemma \ref{lemma:FattenedOmegaExtensionBound} we obtain 
\begin{eqnarray*}
	I_3
	& \leq & 
	C(n) \sqrt \epsilon \norm{E_{3/2}\abs{f_m}}_{L^1(\Omega)}
	\\
	& \leq & 
	C(n, \Omega) \sqrt \epsilon \norm{f_m}_{L^{2(n-1)/n}(\bdy\Omega)}. 
\end{eqnarray*}
For the estimate of $I_4$ we use the Mean-Value Theorem, H\"older's inequality and Lemma \ref{lemma:FattenedOmegaExtensionBound} to obtain 
\begin{eqnarray*}
	I_4
	& \leq & 
	\epsilon \norm{\Grad \rho_i}_{C^0(B^i)}
	\int_{B_1}\int_\Omega \eta_1(z) \abs{E_2 f_m(x - \epsilon z)}\; \d x\; \d z
	\\
	&\leq & 
	\epsilon \norm{\Grad \rho_i}_{C^0(B^i)}\norm{E_2f_m}_{L^1(\Omega^\delta)}
	\\
	&\leq & 
	C(n, \Omega, \delta) \epsilon \norm{\Grad \rho_i}_{C^0(B^i)}\norm{f_m}_{L^{2(n-1)/n}(\bdy\Omega)}. 
\end{eqnarray*}
Combining the estimates of $I_1, \ldots, I_4$ we obtain 
\begin{equation*}
	\norm{h_m^\epsilon - h_m}_{L^1(\Omega)}
	\leq
	C(n, \Omega, \delta) \sqrt \epsilon. 
\end{equation*}
Now choose $0< \theta < 1$ such that $q = \theta + (1 - \theta)2^*$. By interpolation we have
\begin{eqnarray*}
	\norm{h_m^\epsilon - h_m}_{L^q(\Omega)}^q
	& \leq & 
	\norm{h_m^\epsilon - h_m}_{L^1(\Omega)}^\theta\norm{h_m^\epsilon - h_m}_{L^{2^*}(\Omega)}^{(1 - \theta)2^*}
	\\
	& \leq & 
	C(n, \Omega, \delta)^\theta \epsilon^{\frac \theta 2}\sup_m\norm{h_m^\epsilon - h_m}_{L^{2^*}(\Omega)}^{(1 - \theta)2^*}. 
\end{eqnarray*}
Step 1 is complete.\\ 

{\bf Step 2:} For each fixed $\epsilon>0$ sufficiently small, the sequence $(h_m^\epsilon)_{m = 1}^\infty$ is uniformly bounded and equicontinuous. \\
To see the uniform bound, observe that for fixed $\epsilon>0$ small, H\"older's inequality and Lemma \ref{lemma:FattenedOmegaExtensionBound} give
\begin{eqnarray*}
	\abs{h_m^\epsilon(x)}
	& \leq & 
	\int_{B_\epsilon(x)}\eta_\epsilon(x - z)\abs{h_m(z)}\d z
	\\
	&\leq &  
	C\epsilon^{-n}\norm{E_2f_m}_{L^{2^*}(\Omega^{2\delta})}
	\\
	& \leq & 
	C\epsilon^{-n}\norm{f_m}_{L^{2(n - 1)/n}(\bdy \Omega)} 
\end{eqnarray*}
for some positive constant $C= C(n, \Omega, \delta)$. To see that equicontinuity holds, note that for any $x, w\in \overline {\Omega ^\delta}$ we have
\begin{equation}
\label{eq:EquicontinuityDecomposition}
	\abs{h_m^\epsilon(x)- h_m^\epsilon(w)}
	\leq
	J_1 + J_2, 
\end{equation}
where 
\begin{eqnarray*}
	J_1
	& = & 
	\int_{B_\epsilon} \eta_\epsilon(z)\abs{E_2f_m(x - z)}\abs{\rho_i(x - z) - \rho_i(w - z)}\; \d z
	\\
	J_2 
	& = & 
	 \int_{B_\epsilon}\eta_\epsilon(z) \rho_i(w - z)
	 \abs{E_2f_m(x - z) - E_2f_m(w - z)}\; \d z. 
\end{eqnarray*}
Using the Mean-Value Theorem, H\"older's inequality and Lemma \ref{lemma:FattenedOmegaExtensionBound} we have 
\begin{eqnarray*}
	J_1
	& \leq & 
	C\epsilon^{-n}\norm{\Grad \rho_i}_{L^\infty}\abs{x - w}\norm{E_2f_m}_{L^1(\Omega^{2\delta})}
	\\
	& \leq & 
	C(n, \Omega, \delta)\epsilon^{-n}\norm{f_m}_{L^{2(n - 1)/n}(\bdy\Omega)}\abs{x - w}. 
\end{eqnarray*}
To estimate $J_2$ first note that for all $x, w\in \Omega^\delta$, all $y\in \bdy\Omega$ and a.e. $z\in B_\epsilon$ the Mean-Value Theorem gives
\begin{eqnarray*}
	\abs{\abs{x - z - y}^{2-n} - \abs{w - z - y}^{2-n}}
	& = & 
	\abs{\int_0^1 \frac{d}{dt}\abs{tx + (1-t)w - y - z}^{2-n}\; \d t}
	\\
	&\leq & 
	C(n) \abs{x- w}\int_0^1\abs{t x + (1 - t)w - y- z}^{1-n}\; \d t. 
\end{eqnarray*}
Choosing $R> 1 + 2\text{ diam}(\Omega^\delta)$ we get
\begin{eqnarray*}
	\int_{B_\epsilon}
	\abs{\abs{x - z- y}^{2-n} - \abs{w -z - y}^{2-n}}\; \d z
	&\leq &
	C(n) \abs{x- w}\int_0^1\int_{B_\epsilon} \abs{t x + (1- t)w - y - z}^{1-n}\; \d z\; \d t
	\\
	&\leq & 
	C(n)\abs{x- w}\int_{B_R}\abs z^{1-n}\; \d z
	\\
	&\leq & 
	C(n, \Omega) \abs{x - w}. 
\end{eqnarray*}
Therefore, 
\begin{eqnarray*}
	J_2
	& \leq & 
	C\epsilon^{-n}\int_{\bdy \Omega}\int_{B_\epsilon}\abs{f_m(y)}
	\abs{\abs{x - z - y}^{2-n} - \abs{w - z - y}^{2-n} } \; \d z \; \d S_y
	\\
	&\leq & 
	C(n, \Omega)\epsilon^{-n}\abs{x - w}\norm{f_m}_{L^1(\bdy\Omega)}
	\\
	&\leq & 
	C(n, \Omega)\epsilon^{-n} \abs{x - w}. 
\end{eqnarray*}
Using the estimates of $J_1$ and $J_2$ in \eqref{eq:EquicontinuityDecomposition} establishes the the equicontinuity of $h_m^\epsilon$. \\

With steps 1 and 2 complete, one may use a standard diagonal subsequence argument to construct an $L^q(\Omega)$-convergent subsequence of $(h_m)$. 

%
\end{proof}
%
\section{Criterion for existence of supremum and a domain for which the criterion is satisfied}
\label{section:ExistenceOfSupremum}
\subsection{Lower bound for the extension constant} 
In this subsection we will prove Proposition \ref{prop:BallBestExtensionConstant}. For  $\epsilon>0$ let $f_\epsilon$ and $g_\epsilon$ be as in \eqref{buble}. These functions satisfy
\begin{equation*}
	\int_{\bb R_+^n}\int_{\bdy \bb R_+^n}
	\frac{f_\epsilon(y) g_\epsilon(x)}{\abs{x- y}^{n - 2}} \; \d y \; \d x
	= 
	\cal E_2(B_1)\norm{f_\epsilon}_{L^{2(n-1)/n}(\bdy \bb R_+^n)}\norm{g_\epsilon}_{L^{2n/(n+ 2)}(\bb R_+^n)}. 
\end{equation*}
In particular, we have both
\begin{equation}
\label{eq:ExtremalsParallel1}
	\tilde E_2 f_\epsilon(x)
	= 
	C_1 g_\epsilon(x)^{\frac{n - 2}{n + 2}}
	= 
	C_1\left(\frac \epsilon{\abs{x'}^2 +(x_n + \epsilon)^2}\right)^{\frac{n - 2}{2}}
\end{equation}
and
\begin{equation}
\label{eq:ExtremalsParallel2}
	\tilde R_2 g_\epsilon(y)
	= 
	C_2f_\epsilon(y)^{\frac{n - 2}{n}}
	=
	C_2\left(\frac{\epsilon}{\epsilon^2 + \abs y^2}\right)^{\frac{n - 2}{2}}
\end{equation}
for some constants $C_1, C_2>0$, where $\tilde E_2$ is as in \eqref{eq:UHSExtensionOperator} and $\tilde R_2$ is given by
\begin{equation*}
	\tilde R_2 g(y) = \int_{\bb R_+^n} \frac{g(x)}{\abs{x- y}^{n - 2}}\; \d x, 
	\qquad
	y\in \bdy \bb R_+^n.
\end{equation*} 

\begin{proof}[Proof of Proposition \ref{prop:BallBestExtensionConstant}]
For $R>0$ we use the notation $B_R^+ = B_R\cap \bb R_+^n$ and $B_R^{n - 1}= B_R\cap \bdy \bb R_+^n$. Let $f_\epsilon$ and $g_\epsilon$ be as in \eqref{buble}. For any $R>0$ these functions satisfy
\begin{eqnarray*}
	\int_{B_R^+}\int_{B_R^{n -1}}\frac{f_\epsilon(y)g_\epsilon(x)}{\abs{x - y}^{n - 2}}\; \d y \; \d x
	& = & 
	\cal E_2(B_1)\norm{f_\epsilon}_{L^{\frac{2(n-1)}{n}}(\bdy \bb R_+^n)}\norm{g_\epsilon}_{L^{\frac{2n}{n+2}}(\bb R_+^n)}
	\\
	& & 
	- I_1(\epsilon,R) - I_2(\epsilon, R) + I_3(\epsilon, R), 
\end{eqnarray*}
where
\begin{eqnarray*}
	I_1(\epsilon, R) & = & \int_{\bb R_+^n\setminus B_R^+}\int_{\bdy \bb R_+^n}\frac{f_\epsilon(y) g_\epsilon(x)}{\abs{x - y}^{n - 2}}\; \d y \; \d x\\
	I_2(\epsilon, R) & = & \int_{\bb R_+^n}\int_{\bdy \bb R_+^n\setminus B_R^{n-1}}\frac{f_\epsilon(y) g_\epsilon(x)}{\abs{x - y}^{n - 2}}\; \d y \; \d x\\
	I_3(\epsilon, R) & = & \int_{\bb R_+^n\setminus B_R^+}\int_{\bdy \bb R_+^n\setminus B_R^{n-1}}\frac{f_\epsilon(y) g_\epsilon(x)}{\abs{x - y}^{n - 2}}\; \d y \; \d x. 
\end{eqnarray*}
By performing routine computations we obtain both
\begin{eqnarray}
\label{eq:fepsilonLebesgueEstimate}
	\norm{f_\epsilon}_{L^{\frac{2(n-1)}{n}}(\bdy \bb R_+^n\setminus B_R^{n-1})}^{\frac{2(n-1)}{n}}
	& \leq & 
	C(n)\left(\frac{\epsilon}{R}\right)^{n-1}
\end{eqnarray}
and
\begin{eqnarray}
\label{eq:gepsilonLebesgueEstimate}
	\norm{g_\epsilon}_{L^{\frac{2n}{n+2}}(\bb R_+^n\setminus B_R^+)}^{\frac{2n}{n + 2}}
	& \leq & 
	C(n)\left(\frac\epsilon R\right)^n. 
\end{eqnarray}
Using \eqref{eq:ExtremalsParallel1} and \eqref{eq:gepsilonLebesgueEstimate} we obtain 
\begin{equation*}
	I_1(\epsilon, R)
	= 
	\int_{\bb R_+^n\setminus B_R^+} \tilde E_2 f_\epsilon (x) g_\epsilon(x)\; \d x
	\leq
	C\left(\frac \epsilon R\right)^n. 
\end{equation*}
Using \eqref{eq:ExtremalsParallel2} and \eqref{eq:fepsilonLebesgueEstimate} we obtain 
\begin{equation*}
	I_2(\epsilon, R)
	= 
	\int_{\bdy \bb R_+^n\setminus B_R^{n - 1}}f_\epsilon(y) \tilde R_2 g_\epsilon(y)\; \d y
	\leq
	C\left(\frac \epsilon R\right)^{n - 1}. 
\end{equation*}
Combining the estimates for $I_1$ and $I_2$ and since $I_3\geq 0$ we get
\begin{equation}
\label{eq:HLSExtremalsBR1}
	\int_{B_R^+}\int_{B_R^{n-1}}
	\frac{f_\epsilon(y)g_\epsilon(x)}{\abs{x- y}^{n - 2}}\; \d y\; \d x
	\geq
	\cal E_2(B_1)\norm{f_\epsilon}_{L^{2(n-1)/n}(\bdy \bb R_+^n)}\norm{g_\epsilon}_{L^{2n/(n+2)}(\bb R_+^n)}
	- 
	C\left(\frac \epsilon R\right)^{n - 1}
\end{equation}
for $\epsilon\leq R$. 


Now let $y^0\in \bdy \Omega$. For $R>0$ small we may choose an open set $U_R$ containing $y^0$ together with a smooth diffeomorphism $\Phi:U_R\to B_R$ such that $\Phi(U_R) = B_R$, $\Phi(U_R\cap \Omega) = B_R^+$ and $\Phi(U_R\cap \bdy \Omega) = B_R^{n-1}$. Given $\delta>0$, by choosing $R = R(\delta)$ smaller if necessary we may also arrange both the Lipschitz continuity with small Lipschitz constants for $\Phi$ and $\Phi^{-1}$:
\begin{equation*}
	(1 + \delta)^{-1}
	\leq
	\frac{\abs{\Phi(\xi_1) - \Phi(\xi_2)}}{\abs{\xi_1 - \xi_2}}
	\leq
	1 + \delta 
\end{equation*}
 for all distinct $\xi_1, \xi_2\in U_R$ and that the pull-backs of the area and volume forms satisfy
\begin{equation*}
	(1 + \delta)^{-1}\d S_\zeta
	\leq
	\Phi^*(\d y)
	\leq
	(1 + \delta) \d S_\zeta
	\qquad
	\text{ and }
	\qquad
	(1 + \delta)^{-1}\d \xi
	\leq
	\Phi^*(\d x)
	\leq
	(1 + \delta) \d \xi. 
\end{equation*}
For any such $\delta$ and $R$ applying Corollary \ref{crl:OmegaRestrictionInequality} gives
\begin{equation}
\label{eq:HLSExtremalsBR2}
\begin{array}{lcl}
	\multicolumn{3}{l}{
	\displaystyle
	\int_{B_R^+}\int_{B_R^{n - 1}}\frac{f_\epsilon(y)g_\epsilon(x)}{\abs{x - y}^{n - 2}}\; \d y \; \d x
	}
	\\
	& = & 
	\displaystyle
	\int_{\Omega \cap U_R}\int_{\bdy \Omega \cap U_R}
	\frac{(f_\epsilon \circ\Phi)(\zeta) \; (g_\epsilon \circ \Phi)(\xi)}{\abs{\Phi(\xi) - \Phi(\zeta)}^{n -2}} \Phi^*(\d y)\Phi^*(\d x)
	\\
	& \leq & 
	\displaystyle
	(1 + \delta)^{n}\int_{\Omega \cap U_R}\int_{\bdy \Omega \cap U_R}
	\frac{(f_\epsilon \circ\Phi)(\zeta)\;  (g_\epsilon \circ \Phi)(\xi)}{\abs{\xi- \zeta}^{n -2}} \; \d S_\zeta\; \d \xi
	\\
	& \leq & 
	\displaystyle
	(1 + \delta)^n\cal E_2(\Omega)\norm{f_\epsilon \circ \Phi}_{L^{\frac{2(n - 1)}{n}}(\bdy \Omega \cap U_R)}\norm{g_\epsilon\circ\Phi}_{L^{\frac{2n}{n + 2}}(\Omega \cap U_R)}. 
\end{array}
\end{equation}
Moreover, 
\begin{eqnarray*}
	\int_{\bdy \Omega \cap U_R}f_\epsilon(\Phi(\zeta))^{\frac{2(n-1)}{n}}\; \d S_\zeta
	& = & 
	\int_{B_R^{n -1}}f_\epsilon(y)^{\frac{2(n-1)}{n}}(\Phi^{-1})^*(\d S_\zeta)
	\\
	& \leq & 
	(1 + \delta)\int_{\bb R_+^n} f_\epsilon(y)^{\frac{2(n-1)}{n}}\; \d y, 
\end{eqnarray*}
so 
\begin{equation*}
	\norm{f_\epsilon \circ \Phi}_{L^{\frac{2(n-1)}{n}}(\bdy \Omega \cap U_R)}
	\leq
	(1 + \delta)^{\frac{n}{2(n-1)}}\norm{f_\epsilon}_{L^{\frac{2(n-1)}{n}}(\bdy \bb R_+^n)}. 
\end{equation*}
Similarly, 
\begin{equation*}
	\norm{g_\epsilon\circ \Phi}_{L^{\frac{2n}{n + 2}}(\Omega\cap U_R)}
	\leq
	(1 + \delta)^{\frac{n + 2}{2n}}\norm{g_\epsilon}_{L^{\frac{2n}{n + 2}}(\bb R_+^n)}. 
\end{equation*}
Combining these estimates with \eqref{eq:HLSExtremalsBR1} and \eqref{eq:HLSExtremalsBR2} gives
\begin{equation*}
\begin{array}{lcl}
\multicolumn{3}{l}{
	\displaystyle
	(1 + \delta)^{n + \frac{n}{2(n-1)}+\frac{n + 2}{2n}}\cal E_2(\Omega)
	\norm{f_\epsilon}_{L^{\frac{2(n-1)}{n}}(\bdy \bb R_+^n)}\norm{g_\epsilon}_{L^{\frac{2n}{n + 2}}(\bb R_+^n)}
	}
	\\
	& \geq &  
	\displaystyle
	\cal E_2(B_1)\norm{f_\epsilon}_{L^{\frac{2(n-1)}{n}}(\bdy \bb R_+^n)}\norm{g_\epsilon}_{L^{\frac{2n}{n + 2}}(\bb R_+^n)}
	- 
	C\left(\frac \epsilon R\right)^{n - 1}. 
\end{array}
\end{equation*}
Using the fact that both 
\begin{equation*}
	\norm{f_\epsilon}_{L^{\frac{2(n-1)}{n}}(\bb R_+^n)}
	= 
	\norm{f_1}_{L^{\frac{2(n-1)}{n}}(\bb R_+^n)}
	\qquad
	\text{ and }
	\qquad
	\norm{g_\epsilon}_{L^{\frac{2n}{n + 2}}(\bb R_+^n)} 
	= 
	\norm{g_1}_{L^{\frac{2n}{n + 2}}(\bb R_+^n)}
\end{equation*}
for all $\epsilon>0$ we get
\begin{equation*}
	\cal E_2(B_1) - C\left(\frac \epsilon R\right)^{n - 1}
	\leq
	(1 +\delta)^{n + \frac{n}{2(n-1)} + \frac{n + 2}{2n}}\cal E_2(\Omega). 	
\end{equation*}
Finally, given $\delta_0\in (0, 1)$ choose $R_0(\delta_0, \Omega)>0$ small then choose $\epsilon = \epsilon(n, \delta_0, R_0)$ small so that 
\begin{equation*}
	C\left(\frac \epsilon {R_0}\right)^{n - 1}
	< 
	\delta_0 \cal E_2(B_1)
\end{equation*}
This gives
\begin{equation*}
	\cal E_2(B_1)(1 - \delta_0)
	\leq
	(1 + \delta_0)^{n + \frac{n}{2(n-1)} + \frac{n +2}{2n}}\cal E_2(\Omega). 
\end{equation*}
Since $0<\delta_0< 1$ is arbitrary Proposition \ref{prop:BallBestExtensionConstant} is established. 
\end{proof}
\subsection{Criterion  for the existence of extremal functions} 
Define for $2< q< 2^*$
\begin{equation}
\label{eq:SharpSubcriticalExtensionConstant}
	\cal E_{2, q}(\Omega)
	= 
	\sup\{\norm{E_2 f}_{L^q(\Omega)}: \norm{f}_{L^{2(n-1)/n}(\bdy\Omega)} = 1\}. 
\end{equation}

First, it is routine to check 
\begin{lem}
\label{lemma:SubcriticalExtensionConstantConverge}
$\cal E_{2, q}(\Omega)\to \cal E_2(\Omega)$ as $q\to (2^*)^-$. 
\end{lem}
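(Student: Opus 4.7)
The plan is to establish the convergence by showing two one-sided inequalities. Specifically, I will show both
\begin{equation*}
	\limsup_{q \to (2^*)^-} \cal E_{2,q}(\Omega) \leq \cal E_2(\Omega)
	\qquad \text{and} \qquad
	\liminf_{q \to (2^*)^-} \cal E_{2,q}(\Omega) \geq \cal E_2(\Omega).
\end{equation*}

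For the upper bound, I would apply H\"older's inequality on the bounded domain $\Omega$: for $2 < q < 2^*$ and any $f \in L^{2(n-1)/n}(\bdy\Omega)$ with unit norm,
\begin{equation*}
	\norm{E_2 f}_{L^q(\Omega)}
	\leq
	\abs{\Omega}^{\frac{1}{q} - \frac{1}{2^*}} \norm{E_2 f}_{L^{2^*}(\Omega)}
	\leq
	\abs{\Omega}^{\frac{1}{q} - \frac{1}{2^*}} \cal E_2(\Omega).
\end{equation*}
Taking the supremum over such $f$ yields $\cal E_{2,q}(\Omega) \leq \abs{\Omega}^{\frac{1}{q} - \frac{1}{2^*}} \cal E_2(\Omega)$, and since the prefactor tends to $1$ as $q \to (2^*)^-$, the desired lim sup bound follows.

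For the lower bound, I would fix any $f \in L^{2(n-1)/n}(\bdy\Omega)$ with $\norm{f}_{L^{2(n-1)/n}(\bdy\Omega)} = 1$. By Lemma \ref{lemma:FattenedOmegaExtensionBound} we have $E_2 f \in L^{2^*}(\Omega)$. I would then invoke the standard fact that on a finite-measure space, the $L^q$ norm of a fixed $L^{2^*}$ function converges to its $L^{2^*}$ norm as $q \to (2^*)^-$; this is a quick application of dominated convergence after splitting $\Omega$ into $\{|E_2 f| \geq 1\}$ and $\{|E_2 f| < 1\}$. Thus
\begin{equation*}
	\norm{E_2 f}_{L^{2^*}(\Omega)}
	=
	\lim_{q \to (2^*)^-} \norm{E_2 f}_{L^q(\Omega)}
	\leq
	\liminf_{q \to (2^*)^-} \cal E_{2,q}(\Omega).
\end{equation*}
Taking the supremum over $f$ produces the lim inf bound.

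Combining the two estimates yields $\lim_{q \to (2^*)^-} \cal E_{2,q}(\Omega) = \cal E_2(\Omega)$. There is no genuine obstacle here, as the argument is a textbook continuity-of-sharp-constants computation; the only ingredients beyond H\"older and dominated convergence are the boundedness of $\Omega$ (which makes $|\Omega|^{1/q - 1/2^*}$ well-behaved) and the boundedness of $E_2 : L^{2(n-1)/n}(\bdy\Omega) \to L^{2^*}(\Omega)$ already established in Lemma \ref{lemma:FattenedOmegaExtensionBound}.
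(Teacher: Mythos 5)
Your proof is correct. The paper does not actually supply a proof of this lemma — it simply states that the claim is ``routine to check'' — and the two-sided argument you give (H\"older on the bounded domain $\Omega$ to get $\cal E_{2,q}(\Omega) \leq \abs{\Omega}^{1/q - 1/2^*}\cal E_2(\Omega)$, then the dominated-convergence fact that $\|g\|_{L^q(\Omega)}\to \|g\|_{L^{2^*}(\Omega)}$ for fixed $g\in L^{2^*}(\Omega)$ on a finite-measure space, applied to $g = E_2 f$) is exactly the standard computation one would expect the authors to have in mind. Indeed, the $\abs{\Omega}^{1/q-1/2^*}$ prefactor trick you use in the upper bound is precisely the estimate the paper later invokes in the final lines of the proof of Theorem \ref{theorem:SupremumCriterion}, so your approach is consistent with how the authors use this lemma.
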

We are ready to prove
\begin{prop}
For every $2< q< 2^*$ there is $0\leq f\in C^1(\bdy\Omega)$ satisfying both $\norm{f}_{L^{2(n-1)/n}(\bdy\Omega)} = 1$ and $\norm{E_2 f}_{L^q(\Omega)} = \cal E_{2, q}(\Omega)$. 
\end{prop}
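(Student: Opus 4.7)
The plan is the standard direct method together with the subcritical compactness from Lemma \ref{lemma:SubcriticalExtensionCompactness} and a boot-strap on the Euler--Lagrange equation for the $C^1$ regularity.

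First, fix $2<q<2^*$ and let $(f_m)\subset L^{2(n-1)/n}(\bdy\Omega)$ be a maximizing sequence for $\cal E_{2,q}(\Omega)$, that is, $\norm{f_m}_{L^{2(n-1)/n}(\bdy\Omega)}=1$ and $\norm{E_2 f_m}_{L^q(\Omega)}\to \cal E_{2,q}(\Omega)$. Since the kernel $\abs{x-y}^{2-n}$ is positive, $\abs{E_2 f_m}\leq E_2\abs{f_m}$ pointwise, and $\norm{\,\abs{f_m}\,}_{L^{2(n-1)/n}(\bdy\Omega)}=\norm{f_m}_{L^{2(n-1)/n}(\bdy\Omega)}$, so we may assume $f_m\geq 0$. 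By reflexivity of $L^{2(n-1)/n}(\bdy\Omega)$, along a subsequence $f_m\weakconv f$ weakly, and the nonnegative cone being convex and closed forces $f\geq 0$ a.e.

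Next, invoke Lemma \ref{lemma:SubcriticalExtensionCompactness}: the operator $E_2:L^{2(n-1)/n}(\bdy\Omega)\to L^q(\Omega)$ is compact, so $E_2 f_m\to E_2 f$ strongly in $L^q(\Omega)$. Therefore $\norm{E_2 f}_{L^q(\Omega)}=\cal E_{2,q}(\Omega)>0$, so in particular $f\not\equiv 0$. By weak lower semicontinuity of the norm, $\norm{f}_{L^{2(n-1)/n}(\bdy\Omega)}\leq 1$; if this inequality were strict, then $f/\norm{f}_{L^{2(n-1)/n}(\bdy\Omega)}$ would be admissible with extension norm strictly larger than $\cal E_{2,q}(\Omega)$, contradicting the definition. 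Hence $\norm{f}_{L^{2(n-1)/n}(\bdy\Omega)}=1$ and $f$ is an extremizer.

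Finally, for the $C^1(\bdy\Omega)$ regularity: write $p=2(n-1)/n$ and set $F(f)=\norm{E_2 f}_{L^q(\Omega)}^q$, $G(f)=\norm{f}_{L^p(\bdy\Omega)}^p$. Taking a smooth variation and using Fubini to move $E_2$ into its dual $R_2$ (see \eqref{eq:AlphaRestrictionOperatorBoundedDomain}), the Lagrange multiplier condition reads
\begin{equation*}
R_2\bigl((E_2 f)^{q-1}\bigr)(y)=\mu\, f(y)^{p-1}\quad\text{for a.e.\ } y\in\bdy\Omega,
\end{equation*}
for some constant $\mu>0$. Since $2<q<2^*$, the exponent on the right is sub-critical, and $E_2 f\in L^q(\Omega)$ immediately gives $(E_2 f)^{q-1}\in L^{q/(q-1)}(\Omega)$ with $q/(q-1)<2n/(n+2)$. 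Iterating Corollary \ref{crl:OmegaRestrictionInequality}(b) together with Lemma \ref{lemma:FattenedOmegaExtensionBound} on the pair $(R_2,E_2)$ boosts the integrability of $f$, and then of $E_2 f$, at each step by a fixed factor $>1$ because we are away from the critical exponent; after finitely many iterations $f\in L^\infty(\bdy\Omega)$, hence $E_2 f\in L^\infty(\Omega)$, so $R_2((E_2 f)^{q-1})$ is the Newton potential of a bounded function, which is $C^1$ on $\bdy\Omega$ by the regularity lemmas quoted in Section \ref{section:Regularity}. Solving algebraically for $f=(\mu^{-1}R_2((E_2 f)^{q-1}))^{1/(p-1)}$ and using $p-1=(n-2)/n<1$ preserves $C^1$ regularity at points where $f>0$; at zeros of $f$ one uses that the right-hand side is strictly positive on $\bdy\Omega$ (as $E_2 f>0$ in $\Omega$ by positivity of the kernel) to conclude $f>0$ everywhere, completing the proof.

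The main obstacle is the last regularity step: the subcritical bootstrap is straightforward in principle, but some care is required to route all integrability gains through the inequalities of Corollary \ref{crl:OmegaRestrictionInequality} and Lemma \ref{lemma:FattenedOmegaExtensionBound} and to ensure the exponent gap stays bounded away from zero at each iteration so that the procedure terminates in finitely many steps.
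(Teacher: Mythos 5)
Your existence-of-maximizer argument (direct method: maximizing sequence, weak $L^{2(n-1)/n}$ convergence, compactness of $E_2$ into the subcritical $L^q$, and the scaling argument forcing $\norm{f}_{L^{2(n-1)/n}}=1$) is essentially the paper's proof; the only cosmetic difference is that the paper identifies the weak $L^{2^*}$-limit of $E_2 f_i$ via the duality with $R_2$ and shows $\norm{f}_{L^{2(n-1)/n}}\le 1$ by testing the weak convergence against $f^{(n-2)/n}$, whereas you invoke weak lower semicontinuity. Both are fine.

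In the regularity step you have an inequality going the wrong way: for $2<q<2^*$ the function $q\mapsto q/(q-1)$ is decreasing and equals $2n/(n+2)$ at $q=2^*$, so $(E_2 f)^{q-1}\in L^{q/(q-1)}(\Omega)$ with $q/(q-1)>2n/(n+2)$, not $<$. This is the right direction anyway --- it is precisely the strict supercriticality of $q/(q-1)$ relative to $2n/(n+2)$ that makes $R_2\bigl((E_2 f)^{q-1}\bigr)$ land strictly above the critical space $L^{2(n-1)/(n-2)}(\bdy\Omega)$ and gives the first gain in the bootstrap; if your stated inequality were correct the first iteration would produce no improvement. With the sign fixed, your hand-rolled iteration does terminate (one can check that the recursion for $1/p_k$ is affine with slope $n(q-1)/(n-2)>1$ and a repelling fixed point below $n/(2(n-1))$, so the gain per step grows; and once $p_k>n-1$ the kernel $|x-y|^{(2-n)p_k'}$ is integrable on $\bdy\Omega$, giving $E_2 f\in L^\infty$ directly). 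The paper takes a slightly different route: it performs only the first bootstrap step by hand (getting $u=f^{(n-2)/n}\in L^r$ with $r>2(n-1)/(n-2)$), then rewrites the Euler--Lagrange relations as a \emph{linear} system with coefficients $a=v^{q-2}\in L^\sigma$, $\sigma>n/2$, and $b=u^{2/(n-2)}\in L^\tau$, $\tau>n-1$, and invokes the packaged Moser iteration Lemma \ref{lemma:MoserIteration} to get $L^\infty$ in one stroke; then Lemma \ref{lemma:BoundaryFunctionC1} gives $u\in C^1(\bdy\Omega)$ and hence $f=u^{n/(n-2)}\in C^1(\bdy\Omega)$ since $n/(n-2)>1$. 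Your final remark about positivity of $f$ is unnecessary for the $C^1$ claim for exactly the same reason: the exponent $1/(p-1)=n/(n-2)$ exceeds $1$, so $t\mapsto t^{n/(n-2)}$ is $C^1$ at $t=0$ and no lower bound on $R_2((E_2f)^{q-1})$ is needed.
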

\begin{proof}
Let $(f_i)\subset L^{2(n-1)/n}(\bdy\Omega)$ be a sequence of nonnegative functions for which $\norm{f_i}_{L^{2(n - 1)/n}(\bdy\Omega)} = 1$ for all $i$ and for which $\norm{E_2 f_i}_{L^q(\Omega)}\to \cal E_{2, q}(\Omega)$. Since $(f_i)$ is bounded in $L^{2(n - 1)/n}(\bdy\Omega)$ there is $0\leq f\in L^{2(n-1)/n}(\bdy \Omega)$ for which $f_i\weakconv f$ weakly in $L^{2(n - 1)/n}(\bdy \Omega)$. For such $f$ we have $E_2 f_i\weakconv E_2 f$ weakly in $L^{2^*}(\Omega)$. Indeed, for any $g\in L^{2n/(n + 2)}(\Omega)$, Corollary \ref{crl:OmegaRestrictionInequality} (b) guarantees that $R_2 g\in L^{2(n - 1)/(n - 2)}(\bdy \Omega)$, so the $L^{2(n-1)/n}(\bdy\Omega)$-weak convergence $f_i\weakconv f$ gives
\begin{equation*}
	\lb E_2 f_i, g\rb
	= 
	\lb f_i, R_2 g\rb
	\to 
	\lb f, R_2 g\rb
	= 
	\lb E_2 f, g \rb. 
\end{equation*}
By the compactness of $E_2:L^{2(n - 1)/n}(\bdy\Omega)\to L^q(\Omega)$ (Lemma \ref{lemma:SubcriticalExtensionCompactness}), after passing to a subsequence we have $E_2 f_i\to E_2 f$ in $L^q(\Omega)$. Therefore, 
\begin{equation*}
	\norm{E_2 f}_{L^q(\Omega)}
	= 
	\lim_i \norm{E_2 f_i}_{L^q(\Omega)}
	= 
	\cal E_{2, q}(\Omega). 
\end{equation*}
On the other hand, testing the $L^{2(n - 1)/n}(\bdy\Omega)$-weak convergence $f_i\weakconv f$ against $f^{(n - 2)/n}\in L^{2(n - 1)/(n - 2)}(\bdy\Omega)$ and by H\"older's inequality we get
\begin{eqnarray*}
	\norm{f}_{L^{2(n - 1)/n}(\bdy \Omega)}^{\frac{2(n - 1)}{n}}
	& = & 
	\lim_i \int_{\bdy\Omega} f_i f^{\frac{n - 2}{n}}\; \d S
	\\
	&\leq & 
	\lim_i \norm{f_i}_{L^{2(n - 1)/n}(\bdy\Omega)} \norm{f}_{L^{2(n - 1)/n}(\bdy\Omega)}^{\frac{n - 2}{n}}
	\\
	& = &  
	\norm{f}_{L^{2(n - 1)/n}(\bdy\Omega)}^{\frac{n - 2}{n}} 
\end{eqnarray*}
so that $\norm{f}_{L^{2(n - 1)/n}(\bdy\Omega)}\leq 1$. Therefore, 
\begin{equation*}
	\cal E_{2, q}(\Omega)
	\geq
	\frac{\norm{E_2 f}_{L^q(\Omega)}}{\norm{f}_{L^{2(n - 1)/n}(\bdy \Omega)}}
	\geq
	\norm{E_2 f}_{L^q(\Omega)}
	= 
	\cal E_{2, q}(\Omega), 
\end{equation*}
from which we deduce that $\norm{f}_{L^{2(n - 1)/n}(\bdy\Omega)} = 1$. \\

It remains to show that $f\in C^1(\bdy\Omega)$. By direct computation one may verify that $f$ satisfies the Euler-Lagrange equation
\begin{equation*}
	\cal E_{2, q}(\Omega)^q f(y)^{\frac{n - 2}{n}}
	= 
	\int_\Omega \frac{(E_2f(x))^{q-1}}{\abs{x - y}^{n - 2}}\; \d x
	\qquad
	\text{ for }y\in \bdy \Omega. 
\end{equation*}
Therefore, the functions 
\begin{eqnarray*}
	u(y) & = & f(y)^{\frac{n - 2}{n}} \qquad y\in \bdy\Omega\\
	v(x) & = & E_2f(x) \qquad x\in\overline \Omega
\end{eqnarray*}
are nonnegative and satisfy $u\in L^{2(n-1)/(n-2)}(\bdy\Omega)$, $v\in L^{2^*}(\Omega)$ and 
\begin{equation}
\label{eq:SubcriticalEulerLagrangeSystem}
	\left\{
	\begin{array}{rcll}
	u(y)  
	& = & 
	\displaystyle
	\cal E_{2,q}(\Omega)^{-q} 
	\int_\Omega  \frac{v(x)^{q - 1}}{\abs{x - y}^{n - 2}}\; \d x 
	&
	\text{ for }y\in \bdy\Omega
	\\
	v(x) 
	& = & 
	\displaystyle
	\int_{\bdy \Omega}\frac{u(y)^{\frac{n}{n - 2}}}{{\abs{x - y}^{n -2}}}\; \d S_y
	&
	\text{ for } x\in \Omega. 
	\end{array}
	\right.
\end{equation}
The assumption $2< q< 2^*$ guarantees that $r$ given by 
\begin{equation*}
	\frac 1 r 
	= 
	\frac{n}{n - 1}\left(\frac{q - 1}{2^*} - \frac 2 n\right)
\end{equation*}
satisfies $r> \frac{2(n-1)}{n - 2}$. Moreover, Corollary \ref{crl:OmegaRestrictionInequality} and the first item of \eqref{eq:SubcriticalEulerLagrangeSystem} guarantees that 
$u\in L^r(\bdy\Omega)$. The  functions $a(x) = \cal E_{2, q}(\Omega)^{- q}v^{q - 2}(x)$ and $b(y) = u(y)^{2/(n-2)}$ satisfy $a\in L^\sigma(\Omega)$ and $b\in L^\tau(\bdy\Omega)$ with $\sigma = \frac{2^*}{q - 2}> \frac n 2$ and $\tau = \frac{r(n-2)}{2} > n - 1$. Lemma \ref{lemma:MoserIteration} of the appendix guarantees that $u\in L^\infty(\bdy \Omega)$ and that $v\in L^\infty(\Omega)$. Finally, since $v\in L^\infty(\Omega)$, Lemma \ref{lemma:BoundaryFunctionC1} of the appendix guarantees that $u\in C^1(\bdy \Omega)$. The assertion of the proposition follows. 
\end{proof}
We wish to investigate the behavior of the extremal functions for \eqref{eq:SharpSubcriticalExtensionConstant} as $q\to (2^*)^-$. To emphasize the dependence of these functions on $q$ we denote these functions by $f_q$. We define also 
\begin{eqnarray*}
	u_q(y) & = &  f_q^{\frac{n-2}{n}}(y) \qquad \text{ for }y\in \bdy \Omega\\
	v_q(x) & = & E_2f_q(x)\qquad \text{ for }x\in \Omega. 
\end{eqnarray*}
\begin{lem}
\label{lemma:SubcriticalExtremalsBounded}
Suppose $\Omega\subset \bb R^n$ is a smooth bounded domain for which $\cal E_2(\Omega)> \cal E_2(B_1)$. If $(f_q)_{2< q< 2^*}$ is sequence of nonnegative continuous functions satisfying both $\norm{f_q}_{L^{2(n-1)/n}(\bdy\Omega)} = 1$ and $\norm{E_2 f_q}_{L^q(\Omega)} = \cal E_{2, q}(\Omega)$ then $(f_q)_{2< p< 2^*}$ is bounded in $C^0(\bdy\Omega)$. 
\end{lem}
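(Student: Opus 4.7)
The strategy is a contradiction argument via blow-up analysis in the spirit of the Yamabe problem: if the extremals $f_q$ were not uniformly bounded in $C^0(\bdy\Omega)$, one could rescale them into a bubble concentrating at a single boundary point and produce a nontrivial limit profile that extremizes the sharp half-space inequality \eqref{eq:UpperHalfExtensionInequality}. Tracking the constants through the rescaling would force $\cal E_2(\Omega)\le \cal E_2(B_1)$, contradicting \eqref{cri}.

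\textbf{Step 1 (Set-up).} Suppose for contradiction that, along a subsequence still called $\{q\}$ with $q\to (2^*)^-$, we have $M_q:=\|u_q\|_{L^\infty(\bdy\Omega)}\to\infty$. Pick $y_q\in\bdy\Omega$ with $u_q(y_q)=M_q$ and extract a further subsequence so that $y_q\to y^*\in\bdy\Omega$. Near $y^*$ choose a boundary-flattening diffeomorphism $\Phi:U\to B_1$ with $\Phi(U\cap\Omega)=B_1^+$, $\Phi(U\cap\bdy\Omega)=B_1^{n-1}$, and set $\epsilon_q := M_q^{-2/(n-2)}\to 0$. Define the rescaled functions
\begin{equation*}
\tilde u_q(\zeta)=M_q^{-1}u_q\bigl(\Phi^{-1}(\Phi(y_q)+\epsilon_q\zeta)\bigr),
\qquad
\tilde v_q(\xi)=M_q^{-1}v_q\bigl(\Phi^{-1}(\Phi(y_q)+\epsilon_q\xi)\bigr)
\end{equation*}
on expanding neighborhoods in $\bb R^{n-1}$ and $\overline{\bb R_+^n}$ respectively. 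Note $0\le\tilde u_q\le 1$ and $\tilde u_q(0)=1$.

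\textbf{Step 2 (Rescaled Euler-Lagrange system).} Substituting into the system \eqref{eq:SubcriticalEulerLagrangeSystem}, using the linear approximation of $\Phi$ and the change of variables $x=\Phi^{-1}(\Phi(y_q)+\epsilon_q\xi)$, one obtains that $(\tilde u_q,\tilde v_q)$ satisfies
\begin{equation*}
\tilde u_q(\zeta)=\cal E_{2,q}(\Omega)^{-q}M_q^{q-2^*}\!\!\int\frac{\tilde v_q(\xi)^{q-1}(1+o(1))}{|\xi-\zeta|^{n-2}}\,d\xi,
\qquad
\tilde v_q(\xi)=\int\frac{\tilde u_q(\zeta)^{n/(n-2)}(1+o(1))}{|\xi-\zeta|^{n-2}}\,dS_\zeta,
\end{equation*}
where $o(1)\to0$ locally uniformly. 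Passing to a further subsequence we may assume $M_q^{q-2^*}\to\beta\in[0,1]$ and $\cal E_{2,q}(\Omega)\to \cal E_2(\Omega)$ (by Lemma \ref{lemma:SubcriticalExtensionConstantConverge}).

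\textbf{Step 3 (Extraction of limit).} The bound $\tilde u_q\le 1$, together with Lemma \ref{lemma:FattenedOmegaExtensionBound} applied on compact pieces and the equicontinuity for the Riesz kernel away from the diagonal (argued analogously to Step 2 of the proof of Lemma \ref{lemma:SubcriticalExtensionCompactness}), yields a nonnegative subsequential limit $(\tilde u,\tilde v)$ on $\bb R^{n-1}\times\overline{\bb R_+^n}$ with $\tilde u(0)=1$. The limit pair satisfies
\begin{equation*}
\tilde u(\zeta)=\beta\,\cal E_2(\Omega)^{-2^*}\!\!\int_{\bb R^n_+}\frac{\tilde v^{2^*-1}(\xi)}{|\xi-\zeta|^{n-2}}\,d\xi,
\qquad
\tilde v(\xi)=\tilde E_2\bigl(\tilde u^{n/(n-2)}\bigr)(\xi).
\end{equation*}
Since $\tilde u\not\equiv 0$, the first equation forces $\beta>0$.

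\textbf{Step 4 (Contradiction through the sharp half-space constant).} The $L^{2(n-1)/(n-2)}$-norm of $u_q$ on $\bdy\Omega$ is scale-invariant under our rescaling, so Fatou gives
\begin{equation*}
\int_{\bb R^{n-1}}\tilde u^{\frac{2(n-1)}{n-2}}\,dS\le\lim_q\int_{\bdy\Omega}u_q^{\frac{2(n-1)}{n-2}}\,dS=\|f_q\|_{L^{2(n-1)/n}(\bdy\Omega)}^{2(n-1)/n}=1.
\end{equation*}
By Theorem \ref{theorem:UpperHalfSpaceHLStype} applied to $\tilde f:=\tilde u^{n/(n-2)}$ we then have
\begin{equation*}
\|\tilde v\|_{L^{2^*}(\bb R_+^n)}\le\cal E_2(B_1)\,\|\tilde f\|_{L^{2(n-1)/n}(\bb R^{n-1})}\le\cal E_2(B_1).
\end{equation*}
On the other hand, the scale-invariance of $\|E_2 f_q\|_{L^{2^*}}^{2^*}=\epsilon_q^n M_q^{2^*}\int\tilde v_q^{2^*}\,d\xi$ combined with localization of the $L^{2(n-1)/n}$ mass of $f_q$ near $y_q$ (a standard consequence of the rescaling, since a uniform lower bound on $\int_{\{|\tilde u_q|>\eta\}}\tilde u_q^{2(n-1)/(n-2)}\,dS$ carries back to concentration at $y_q$ in $\bdy\Omega$) produces the lower bound $\lim_q\cal E_{2,q}(\Omega)\le\|\tilde v\|_{L^{2^*}(\bb R_+^n)}+o(1)$. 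Combining,
\begin{equation*}
\cal E_2(\Omega)=\lim_{q\to 2^*}\cal E_{2,q}(\Omega)\le\cal E_2(B_1),
\end{equation*}
contradicting \eqref{cri} and completing the proof.

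\textbf{Main obstacle.} The decisive technical point is Step 4: rigorously passing from the profile estimate $\|\tilde v\|_{L^{2^*}(\bb R_+^n)}\le \cal E_2(B_1)$ back to a sharp upper bound on $\cal E_{2,q}(\Omega)$. This requires showing that the $L^{2^*}$-mass of $v_q$ does not leak away from the blow-up region near $y_q$; one accomplishes this by iterating the local $\epsilon$-inequality of Proposition \ref{prop2-1} with cutoffs centered at $y_q$, plus the subcritical extension estimate \eqref{eq:alphaSubcriticlExtension} applied to $E_3|f_q|$ to absorb the complement. The remaining steps (boundary flattening, change of variables, and extraction of a limit) are by now routine.
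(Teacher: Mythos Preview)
Your overall architecture (blow up at a point of maximal $u_q$, rescale by $\epsilon_q=M_q^{-2/(n-2)}$, extract a limit profile on $\bb R_+^n$, derive a contradiction with the sharp half-space constant) coincides with the paper's. Steps 1--3 are essentially the paper's setup, and the paper does establish the limit relations you write, with one important difference noted below.

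The substantive divergence is in how the contradiction is closed (your Step 4). You attempt to pull the numerical inequality $\cal E_{2,q}(\Omega)\le\|\tilde v\|_{L^{2^*}(\bb R_+^n)}+o(1)$ out of a mass--concentration argument. This is the genuine gap: nothing you have done prevents the $L^{2(n-1)/n}$ mass of $f_q$ from splitting among several blow-up points or leaking to a diffuse remainder, and Fatou only gives you the opposite inequality $\|\tilde v\|_{L^{2^*}}\le\liminf\|V_q\|_{L^{2^*}}$. The localisation you suggest via Proposition~\ref{prop2-1} does not by itself rule out multi-bubble behaviour; that would require a Struwe-type global compactness analysis, which is far heavier than what the lemma warrants. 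Relatedly, in Step 3 you assert an \emph{equality} $\tilde u=\beta\,\cal E_2(\Omega)^{-2^*}\tilde R_2(\tilde v^{2^*-1})$; justifying this demands tail control on $\int_{\Omega_q}V_q^{q-1}|x-y|^{2-n}\,dx$ that you have not established, so the argument ``$\beta>0$ since $\tilde u(0)=1$'' is not yet secure.

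The paper sidesteps all of this. It proves only that the limit pair satisfies
\[
V(x)=\int_{\bdy\bb R_+^n}\frac{U^{\frac{n}{n-2}}}{|x-y|^{n-2}}\,dy,
\qquad
\cal E_2(\Omega)^{2^*}U(y)\le\int_{\bb R_+^n}\frac{V^{\frac{n+2}{n-2}}}{|x-y|^{n-2}}\,dx,
\]
the second relation being an \emph{inequality} precisely because the coefficient $\mu_q^{(n-2)q/2-n}\le 1$ may degenerate and because the far-field piece of the integral is simply discarded. One then multiplies the inequality by $U^{n/(n-2)}$, integrates, and applies Theorem~\ref{theorem:UpperHalfSpaceHLStype} to get
\[
\cal E_2(\Omega)^{2^*}\|U\|_{L^{2(n-1)/(n-2)}}^{\frac{2(n-1)}{n-2}}
\le
\cal E_2(B_1)\,\|U\|_{L^{2(n-1)/(n-2)}}^{\frac{n}{n-2}}\|V\|_{L^{2^*}}^{\frac{n+2}{n-2}},
\]
and combines this with $\|V\|_{L^{2^*}}\le\cal E_2(B_1)\|U\|_{L^{2(n-1)/(n-2)}}^{n/(n-2)}$ (from the equality above) and $\|U\|\le1$ to obtain $\cal E_2(\Omega)\le\cal E_2(B_1)$. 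No concentration, no tracking of $\beta$, and no comparison of $\cal E_{2,q}(\Omega)$ with $\|\tilde v\|_{L^{2^*}}$ is needed; the Euler--Lagrange structure carries the constant $\cal E_2(\Omega)$ through to the limit automatically. I recommend replacing your Step~4 with this algebraic argument.
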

\begin{proof}
If $f_q$ satisfies the hypotheses of the lemma then Lemma \ref{lemma:ExtensionHolderEstimate} of the appendix guarantees that $v_q\in C^0(\overline \Omega)$. Since $u_q$ and $v_q$ satisfy \eqref{eq:SubcriticalEulerLagrangeSystem}, the conclusion of the lemma is equivalent the existence of a $q$-independent constant $C>0$ such that for all $q$
\begin{equation}
\label{eq:UniformC0Bound}
	\norm{u_q}_{C^0(\bdy \Omega)} + \norm{v_q}_{C^0(\overline\Omega)}\leq C. 
\end{equation}
In fact, we only need to show that \eqref{eq:UniformC0Bound} holds as $q\to (2^*)^-$. We argue via proof by contradiction. If \eqref{eq:UniformC0Bound} fails then \eqref{eq:SubcriticalEulerLagrangeSystem} implies that both of $\norm{u_q}_{C^0(\bdy\Omega)}$ and $\norm{v_q}_{C^0(\overline\Omega)}$ are unbounded as $q\to (2^*)^-$. Since $v_q$ is harmonic in $\Omega$ there is $z_q\in \bdy\Omega$ for which 
\begin{equation*}
	M_q 
	= 
	\max\{\max_{\bdy\Omega}u_q, \max_{\overline\Omega}v_q\}
	= 
	\max\{u_q(z_q), v_q(z_q)\}
	\to
	\infty. 
\end{equation*}
After passing to a subsequence we may assume that either that $z_q$ maximizes $u_q$ for all $q$ or that $z_q$ maximizes $v_q$ for all $q$. Moreover, since $\bdy\Omega$ is compact, after passing to further subsequence if necessary we may assume that $z_q\to z^0\in \bdy\Omega$. For each $q$ let
\begin{equation*}
	\Gamma_q 
	= 
	A_q(\Omega - \{z_q\})
	=
	\{A_q(x - z_q): x\in \Omega\},
\end{equation*}
where $A_q:\bb R^n\to \bb R^n$ is a rotation chosen so that for $\delta = \delta(\Omega)$ sufficiently small, $\bdy \Gamma_q\cap B_\delta$ is parameterized by a function $h_q\in C^1(B_{2\delta}^{n - 1})$ for which both $h_q(0) = 0 =\abs{\Grad h_q(0)}$. Thus, for any $y\in \bdy \Gamma_q\cap B_\delta$, 
\begin{equation*}
	y = (y', h_q(y')) = H_q(y'). 
\end{equation*}
Set $\mu_q = M_q^{2/(n-2)}$, 
\begin{equation*}
	\Omega_q
	= 
	\mu_q\Gamma_q
	= 
	\{\mu_q x: x\in \Gamma_q\}
\end{equation*}
and define the rescaled functions
\begin{equation*}
	U_q(y)
	= \mu_q^{-(n - 2)/2}u_q(z_q + A_q^{-1}\mu_q^{-1}y)
	\qquad
	\text{ for } y\in \bdy \Omega_q
\end{equation*}
and
\begin{equation*}
	V_q(x)
	= \mu_q^{-(n - 2)/2}v_q(z_q + A_q^{-1}\mu_q^{-1}x)
	\qquad
	\text{ for } x\in  \Omega_q. 
\end{equation*}
These functions satisfy
\begin{equation}
\label{eq:RescaledEulerLagrange-Energy}
	\begin{cases}
	\displaystyle
	\cal E_{2, q}^q(\Omega) U_q(y)  
	= 
	\mu_q^{(n- 2)q/2 - n}\int_{\Omega_q}\frac{V_q(x)^{q  - 1}}{\abs{x - y}^{n - 2}}\; \d x
	&
	\text{ for }y\in \bdy\Omega_q
	\\
	\displaystyle
	V_q(x) 
	=  
	\int_{\bdy \Omega_q} \frac{U_q(y)^{n/(n-2)}}{\abs{x - y}^{n - 2}}\; \d S_y 
	& 
	\text{ for } x\in \overline \Omega_q
	\\
	\norm{U_q}_{L^{2(n-1)/(n-2)}(\bdy\Omega_q)} = 1\\
	\norm{V_q}_{L^q(\Omega_q)} = \mu_q^{\frac n q - \frac{n - 2}{2}}\cal E_{2, q}(\Omega). 
	\end{cases}
\end{equation}
Moreover, we have both $0< U_q(y)\leq 1$ for all $y\in \bdy \Omega_q$ and $0< V_q(x)\leq 1$ for all $x\in \overline\Omega_q$ with either $U_q(0) = 1$ for all $q$ or $V_q(0) = 1$ for all $q$. For $y\in \bdy\Omega_q$ satisfying $\abs{y'}< \mu_q\delta$ set
\begin{equation}
\label{eq:FlattenedUqRescaling}
	\overline U_q(y')
	= 
	U_q(y', \mu_q h(\mu_q^{-1}y'))
	= 
	U_q(\mu_qH_q(\mu_q^{-1}y')). 
\end{equation}
Since $(V_q)_{2< q< 2^*}$ is pointwise bounded and uniformly equicontinuous on compact subsets of $\bb R_+^n$ and since $(\overline U_q)_{2< q< 2^*}$ is pointwise bounded and uniformly equicontinuous on compact subsets of $\bdy \bb R_+^n$ there are nonnegative functions $U\in C^0(\bdy \bb R_+^n)$ and $V\in C^0(\bb R_+^n)$ and there is a subsequence of $q$ along which both $\overline U_q\to U$ in $C^0_{\rm loc}(\bdy \bb R_+^n)$ and $V_q\to V$ in $C^0_{\rm loc}(\bb R_+^n)$. Moreover, 
\begin{equation}
\label{eq:ULimitFunctionSmallNorm}
	\norm{U}_{L^{2(n - 1)/(n-2)}(\bdy \bb R_+^n)}\leq 1. 
\end{equation}
\begin{claim}
\label{claim:V-IntegralUpperBound}
The following equality holds for every $x\in \bb R^n_+$:
\begin{equation*}
	V(x)
	=
	\int_{\bdy \bb R_+^n} \frac{U(y)^{n/(n-2)}}{\abs{x - y}^{n - 2}}\; \d y.
\end{equation*}
\end{claim}
%
\begin{claim}
\label{claim:U-IntegralUpperBound}
The following inequality holds for every $y\in \bdy\bb R_+^n$: 
\begin{equation*}
	\cal E_2^{2^*}(\Omega)U(y)
	\leq
	\int_{\bb R_+^n} \frac{V(x)^{(n + 2)/(n - 2)}}{\abs{x - y}^{n - 2}}\; \d x. 
\end{equation*}	
\end{claim}
Let us delay the proofs of these claims and show that these claims are sufficient to prove the lemma. First observe that Claim \ref{claim:V-IntegralUpperBound} guarantees that $\norm{V}_{L^{2^*}(\bb R_+^n)}\leq \cal E_2(B_1)$. Indeed, for any $R>0$, multiply the equality in Claim \ref{claim:V-IntegralUpperBound} by $V^{(n + 2)/(n-2)}$, integrate over $B_R^+$ then apply Theorem \ref{theorem:UpperHalfSpaceHLStype} to obtain 
\begin{eqnarray*}
	\norm{V}_{L^{2^*}(B_R^+)}^{2^*}
	& = & 
	\int_{B_R^+}\int_{\bdy\bb R_+^n} \frac{U(y)^{n/(n-2)}V(x)^{(n + 2)/(n-2)}}{\abs{x - y}^{n - 2}}\; \d y \; \d x
	\\
	& \leq & 
	\cal E_2(B_1) \norm{U^{\frac{n}{n - 2}}}_{L^{2(n-1)/n}(\bdy \bb R_+^n)}\norm{V^{\frac{n +2}{n -2}}}_{L^{2n/(n + 2)}(B_R^+)}
	\\
	& = & 
	\cal E_2(B_1) \norm{U}_{L^{2(n-1)/(n-2)}(\bdy \bb R_+^n)}^{\frac{n}{n - 2}}
	\norm{V}_{L^{2^*}(B_R^+)}^{\frac{n +2}{n -2}}. 
\end{eqnarray*}
Using inequality \eqref{eq:ULimitFunctionSmallNorm} we obtain $\norm{V}_{L^{2^*}(B_R^+)}\leq \cal E_2(B_1)$ for all $R>0$. By a similar computation, multiplying the inequality of Claim \ref{claim:U-IntegralUpperBound} by $U^{n/(n-2)}$, integrating over $\bdy \bb R_+^n$ then applying Theorem \ref{theorem:UpperHalfSpaceHLStype} we obtain 
\begin{equation*}
	\cal E_2(\Omega)^{2^*} \int_{\bdy \bb R_+^n}U(y)^{2(n-1)/(n-2)}\; \d y
	\leq
	\cal E_2(B_1) \norm{U}_{L^{2(n-1)/(n-2)}(\bdy \bb R_+^n)}^{\frac{n}{n-2}}\norm{V}_{L^{2^*}(\bb R_+^n)}^{\frac{n + 2}{n - 2}}. 
\end{equation*}
Applying \eqref{eq:UpperHalfExtensionInequality} with sharp constant $\cal C_2(n, 2(n-1)/n) = \cal E_2(B_1)$ gives
\begin{equation*}
	\norm{V}_{L^{2^*}(\bb R_+^n)}
	\leq
	\cal E_1(B_1)\norm{U^{\frac{n}{n-2}}}_{L^{2(n-1)/n}(\bdy \bb R_+^n)}
	=
	\cal E_2(B_1)\norm{U}_{L^{2(n-1)/(n-2)}(\bdy \bb R_+^n)}^{\frac{n}{n - 2}} 
\end{equation*}
so in view of the previous estimate we obtain
\begin{equation*}
	\cal E_2^{2^*}(\Omega)
	\norm{U}_{L^{2(n-1)/(n-2)}(\bdy \bb R_+^n)}^{\frac{2(n-1)}{n-2}}
	\leq
	\cal E_2^{2^*}(B_1)
	\norm{U}_{L^{2(n-1)/(n-2)}(\bdy \bb R_+^n)}^{\frac{2^*n}{n - 2}}. 
\end{equation*}
This estimate together with \eqref{eq:ULimitFunctionSmallNorm} contradicts the assumption $\cal E_2(\Omega)>\cal E_2(B_1)$. 
\end{proof}
Let us now provide proofs for Claims \ref{claim:V-IntegralUpperBound} and \ref{claim:U-IntegralUpperBound}. 
\begin{proof}[Proof of Claim \ref{claim:V-IntegralUpperBound}]
For $x\in \bb R_+^n$ and $R>2\abs{x}$ we have
\begin{equation*}
	\abs{V(x) - \int_{\bdy\bb R_+^n} \frac{U(y')^{\frac{n}{n-2}}}{\abs{x - y'}^{n-2}}\; \d y'}
	\leq 
	\abs{V(x) - V_q(x)} + \sum_{i= 1}^5 J_i, 
\end{equation*}
where 
\begin{eqnarray*}
	J_1(x, R)
	& = & 
	\int_{\bdy\bb R_+^n\setminus B_R^{n-1}}
	\frac{U(y')^{\frac{n}{n-2}}}{\abs{x - y'}^{n-2}}\; \d y'
	\\
	J_2(x, R, q)
	& = & 
	\int_{\bdy\Omega_q\setminus\mu_qH_q(\mu_q^{-1} B_R^{n-1})}
	\frac{U_q(y)^{\frac{n}{n - 2}}}{\abs{x - y}^{n-2}}\; \d S_y
	\\
	J_3(x, R, q)
	& = & 
	\int_{B_R^{n-1}}
	\overline U_q(y')^{\frac{n}{n-2}}
	\abs{
	\abs{x - y'}^{2-n} - \abs{x - \mu_qH_q(\mu_q^{-1}y')}^{2-n}
	}\; \d y'
	\\
	J_4(x, R, q)
	& = & 
	\int_{B_R^{n-1}}
	\frac{\overline U_q(y')^{\frac{n}{n-2}}
	\abs{1 - \sqrt{1 + \abs{(\Grad h_q)(\mu_q^{-1}y')}^2}} }
	{\abs{x -\mu_qH_q(\mu_q^{-1}y')}^{n-2}}\; \d y'
	\\
	J_5(x, R, q)
	& = & 
	\int_{B_R^{n-1}}
	\frac{\abs{U(y')^{\frac{n}{n-2}} - \overline U_q(y')^{\frac{n}{n-2}}}}{\abs{x - y'}^{n-2}}\; \d y'. 
\end{eqnarray*}
H\"older's inequality and \eqref{eq:ULimitFunctionSmallNorm} give
\begin{eqnarray*}	
	J_1
	&\leq&
	C(n)\int_{\bdy\bb R_+^n} \frac{U(y')^{\frac{n}{n-2}}}{\abs{y'}^{n-2}}\; \d y'
	\\
	&\leq&
	C\norm{U}_{L^{2(n-1)/(n-2)}(\bdy\bb R_+^n)}^{\frac{n}{n-2}}
	\left(\int_{\bdy\bb R_+^n\setminus B_R^{n-1}} \abs{y'}^{-2(n-1)}\; \d y'\right)^{\frac{n-2}{2(n-1)}}
	\\
	&\leq& 
	CR^{(2-n)/2}. 
\end{eqnarray*}
To estimate $J_2$ oberserve first that $2\abs{x - y}\geq \abs y$ for all $y\in \bdy\Omega_q\setminus\mu_qH_q(\mu_q^{-1}B_R^{n-1})$. Using H\"older's inequality and the third item of \eqref{eq:RescaledEulerLagrange-Energy} we have
\begin{eqnarray*}
	J_2
	&\leq& 
	C(n)\int_{\bdy\Omega_q\setminus B_{R/2}}\frac{U_q(y)^{\frac{n}{n-2}}}{\abs y^{n-2}}\; \d S_y
	\\
	&\leq & 
	C(n)\norm{U_q}^{\frac{n}{n-2}}_{L^{2(n-1)/(n-2)}(\bdy\Omega_q)}
	\left(\int_{\bdy\Omega_q\setminus B_{R/2}}\abs{y}^{-2(n-1)}\; \d S_y\right)^{\frac{n-2}{2(n-1)}}. 
\end{eqnarray*}
Moreover, 
\begin{eqnarray*}
	\int_{\bdy\Omega_q\setminus B_{R/2}} \abs y^{-2(n-1)}\; \d S_y
	& = & 
	\mu_q^{1-n} \int_{\bdy\Gamma_q\setminus B(0, \mu_q^{-1}R/2)}\abs y^{-2(n-1)}\; \d S_y
	\\
	& = & 
	\mu_q^{1-n} \int_{(\bdy\Gamma_q\cap B_\delta)\setminus B(0, \mu_q^{-1}R/2)}\abs y^{-2(n-1)}\; \d S_y
	\\
	& + & 
	\mu_q^{1-n} \int_{\bdy\Gamma_q\setminus B_\delta}\abs y^{-2(n-1)}\; \d S_y
	\\
	&\leq &
	\mu_q^{1-n} \int_{(\bdy\Gamma_q\cap B_\delta)\setminus B(0, \mu_q^{-1}R/2)}\abs y^{-2(n-1)}\; \d S_y
	\\
	& + & 
	\mu_q^{1-n}\abs{\bdy\Omega}\delta^{-2(n-1)}. 
\end{eqnarray*}
The first integral on the right-most side of the above string of inequalities can be estimated by pulling back to $\bdy\bb R_+^n$ as follows: 
\begin{equation*}
\begin{array}{lcl}
	\multicolumn{3}{l}{
	\displaystyle
	\mu_q^{1-n} \int_{(\bdy\Gamma_q\cap B_\delta)\setminus B(0, \mu_q^{-1}R/2)}\abs y^{-2(n-1)}\; \d S_y
	}
	\\
	&\leq & 
	\displaystyle
	\mu_q^{1-n}\int_{B_\delta^{n-1}\setminus B^{n-1}(0, \mu_q^{-1}R/4)}
	\left( \abs{y'}^2 + h_q(y')^2\right)^{1-n}
	\sqrt{1 + \abs{\Grad h_q(y')}^2} \; \d y'
	\\
	&\leq & 
	\displaystyle
	C \mu_q^{1-n}\int_{\bdy\bb R_+^n\setminus B^{n-1}(0, \mu_q^{-1}R/4)} \abs{y'}^{-2(n-1)}\; \d y'
	\\
	&\leq & 
	\displaystyle
	CR^{1-n}. 
\end{array}
\end{equation*}
Therefore 
\begin{equation*}
	\abs{J_2} \leq C(n, \Omega)\left(R^{\frac{2-n}{2}} + \mu_q^{\frac{2-n}{2}}\right). 
\end{equation*}
To estimate $J_3$ we first note that since $h_q\in C^1(B_\delta^{n-1})$ satisfies $h_q(0) = 0 = \abs{\Grad h_q(0)}$ we have $\mu_q\abs{h_q(\mu_q^{-1}y')} = \circ(1)$ uniformly for $y'\in B_R^{n-1}$, where $\circ(1)\to 0$ as $q\to (2^*)^-$. In particular for $q = q(x)$ sufficiently close to $2^*$ we have $2\mu_q\abs{h_q(\mu_q^{-1}y')}< x_n$ for all $y'\in B_R^{n-1}$. For such $y'$ and $q$ the Mean Value Theorem gives
\begin{equation*}
\begin{array}{lcl}
	\multicolumn{3}{l}{
	\displaystyle
	\abs{\abs{x - y'}^{2-n} - \abs{x - \mu_qH_q(\mu_q^{-1}y')}^{2-n}}
	}
	\\
	& = & 
	\displaystyle
	\abs{
	\int_0^1 \frac{d}{dt}\left(
	\abs{x' - y'}^2 + (x_n - t\mu_qh_q(\mu_q^{-1} y'))^2
	\right)^{\frac{2-n}{2}}\; \d t
	}
	\\
	&\leq & 
	\displaystyle
	C(n) \mu_q \abs{h_q(\mu_q^{-1}y')}
	\int_0^1 \left(\abs{x' - y'}^2 + (x_n - t\mu_q h_q(\mu_q^{-1}y'))^2 \right)^{\frac{1-n}{2}}\; \d t
	\\
	&\leq & 
	\displaystyle
	C(n)\mu_q\abs{h_q(\mu_q^{-1}y')}x_n^{1-n}. 
\end{array}
\end{equation*}
Since $(\overline U_q)_{2< q< 2^*}$ is bounded in $C^0(\overline B_R^{n-1})$ we obtain 
\begin{eqnarray*}
	J_3
	&\leq & 
	C(n) \norm{\overline U_q}_{C^0(B_R^{n-1})}^{\frac{n}{n-1}}
	\mu_q\abs{h_q(\mu_q^{-1}y')} x_n^{1-n}R^{n-1}
	\\
	&\leq & 
	\circ(1)x_n^{1-n}R^n
\end{eqnarray*}
as $q\to (2^*)^-$. \\

For the estimate of $J_4$ note that by assumption on $h_q$ we have $\abs{(\Grad h_q)(\mu_q^{-1}y')} = \circ(1)$ as $q\to (2^*)^-$ uniformly for $y'\in B_R^{n-1}$. Therefore, 
\begin{equation*}
	\abs{1 - \sqrt{1 + \abs{(\Grad h_q)(\mu_q^{-1}y') }^2 }} \to 0
\end{equation*}
as $q\to (2^*)^-$ uniformly for $y'\in B_R^{n-1}$. This gives
\begin{equation*}
	J_4
	\leq
	C(n) \norm{\overline U_q}_{C^0(B_R^{n-1})}^{\frac{n}{n-2}}x_n^{2-n}
	\int_{B_R^{n-1}}\abs{1 - \sqrt{1 + \abs{(\Grad h_q)(\mu_q^{-1}y') }^2 }}\; \d y'
	= 
	\circ(1). 
\end{equation*}
The estimate of $J_5$ is
\begin{eqnarray*}
	J_5
	\leq
	\norm{ U^{\frac{n}{n-2}} - \overline U_q^{\frac{n}{n-2}}}_{C^0(\overline B_R^{n-1})}
	\int_{B_{4R}^{n-1}}\abs{y'}^{2-n}\; \d y'
	= 
	\circ(1)
\end{eqnarray*}
as $q\to (2^*)^-$. \\

Finally, given $x^0\in \bb R_+^n$ and $\epsilon>0$ by first choosing $R = R(x^0,\epsilon)>0$ large then choosing $q = q(\epsilon, x^0, R)$ sufficiently close to $2^*$ we obtain $\sum_{i = 1}^5 J_i< \epsilon$. Since $V_q(x^0) \to V(x^0)$ as $q\to (2^*)^-$ and since $\epsilon>0$ is arbitrary the claim is established. 
\end{proof}
\begin{proof}[Proof of Claim \ref{claim:U-IntegralUpperBound}]
Let $y\in \bdy \bb R_+^n$ and let $R> 2\abs{y} + 1$. We have
\begin{equation*}
	\cal E_2(\Omega)^{2^*} U(y) - \int_{\bb R_+^n} \frac{V(x)^{\frac{n + 2}{n - 2}}}{\abs{x - y}^{n - 2}}\; \d x
	= 
	\cal E_2^{2^*}(\Omega)U(y) - \cal E_{2, q}^q(\Omega)\overline U_q(y)
	+ 
	\sum_{i = 1}^4 J_i, 
\end{equation*}
where
\begin{eqnarray*}
	J_1 (y, R) & = & 
	-\int_{(\bb R_+^n\setminus B_R^+)\cup(B_R^+\setminus \Omega_q)}\frac{V(x)^{\frac{n + 2}{n - 2}}}{\abs{x - y}^{n - 2}}\; \d x
	\leq 0
	\\
	J_2(y, R, q) & = & 
	\mu_q^{- nq\left(\frac 1 q - \frac 1{2^*}\right)}
	\int_{\Omega_q\setminus B_R^+} \frac{V_q(x)^{q - 1}}{\abs{x - \mu_q H_q(\mu_q^{-1}y)}^{n - 2}}\; \d x
	\\
	J_3(y, R, q) 
	& = & 
	\mu_q^{- nq\left(\frac 1 q - \frac 1{2^*}\right)}
	\int_{\Omega_q\cap B_R^+}V_q(x)^{q - 1}
	\left(
	\abs{x - \mu_qH_q(\mu_q^{-1}y)}^{2-n} - \abs{x- y}^{2-n}
	\right)\; \d x
	\\
	J_4 (y, R, q)
	& = &
	\int_{\Omega_q\cap B_R^+}
	\left(
	\mu_q^{- nq\left(\frac 1 q - \frac 1{2^*}\right)}V_q(x)^{q - 1} - V(x)^{\frac{n + 2}{n - 2}}
	\right)\abs{x - y}^{2-n}\; \d x. 
\end{eqnarray*}
To estimate $J_2$ observe that for $y\in B_{R/2}^{n - 1}$ and $\abs x > R$ we have $\abs{x} \leq 4 \abs{x - \mu_qH_q(\mu_q^{-1}y)}$ whenever $q$ is sufficiently close to $2^*$. For such $q$, using H\"older's inequality and the fourth item of \eqref{eq:RescaledEulerLagrange-Energy} we have 
\begin{eqnarray*}
	J_2
	& \leq & 
	C(n) \mu_q^{-nq\left(\frac 1 q - \frac1{2^*}\right)}\norm{V_q}_{L^q(\Omega_q)}^{q- 1}
	\left(\int_{\bb R^n\setminus B_R}\abs{x}^{-(n - 2)q}\; \d x\right)^{\frac 1 q}
	\\
	&\leq & 
	C(n, \Omega) \mu_q^{-n\left(\frac 1 q - \frac{1}{2^*}\right)}R^{\frac n q - (n - 2)}
	\\
	& \leq & 
	C(n, \Omega)R^{-\frac 1 4}
\end{eqnarray*}
whenever $q$ is sufficiently close to $2^*$. 
For the estimate of $J_3$, first note that by the Mean-Value Theorem we have
\begin{eqnarray*}
	\abs{\abs{x - \mu_qH_q(\mu_q^{-1}y)}^{2-n} - \abs{x - y}^{2-n}}
	& \leq & 
	\abs{
	\int_0^1\frac{d}{dt}
	\left(
	\abs{x' - y}^2 + (x_n - t\mu_qh_q(\mu_q^{-1}y))^2
	\right)^{\frac{2-n}{2}}\; \d t
	}
	\\
	&\leq & 
	C(n)\mu_q\abs{h_q(\mu_q^{-1}y)}\int_0^1\frac{\d t}{\abs{x - \tilde y(q, t)}^{n - 1}}, 
\end{eqnarray*}
where $\tilde y(q, t) = (y, t\mu_qh_q(\mu_q^{-1}y))$. Since $\mu_q\abs{h_q(\mu_q^{-1}y)} = \circ(1)$ uniformly for $y\in \overline B_R^{n-1}$ as $q\to (2^*)^-$ and since $0\leq V_q(x)\leq 1$ we get
\begin{eqnarray*}
	J_3
	&= & 
	\circ(1) \int_0^1\int_{B(\tilde y(q, t), 2R)}\abs{x - \tilde y(q, t)}^{1-n}\; \d x \; \d t
	\\
	& = & 
	\circ(1)R
\end{eqnarray*}
as $q\to (2^*)^-$. \\
To estimate $J_4$, 
since $\mu_q^{-nq\left(\frac 1 q - \frac 1{2^*}\right)}< 1$ for all $q$ and since $V_q\to V$ in $C^0_{\rm loc}(\bb R_+^n)$, we have
\begin{eqnarray*}
	J_4
	& \leq & 
	\norm{V_q^{q- 1} - V^{\frac{n + 2}{n - 2}}}_{C^0(\overline B_R^+\cap \{\abs{x_n}\geq R^{-2}\})}
	\int_{B_{2R}}\abs{x}^{2-n}\; \d x
	+
	2\int_{B_{2R}\cap \{\abs{x_n}< R^{-2}\}}\abs{x}^{2-n}\; \d x
	\\
	&\leq & 
	\circ(1)R^2 + R^{-1}
\end{eqnarray*}
as $q\to (2^*)^-$. Combining the estimates of $J_1,\ldots, J_4$ and using both Lemma \ref{lemma:SubcriticalExtensionConstantConverge} and the $C^0_{\rm loc}(\bdy \bb R_+^n)$-convergence $\overline U_q\to U$ we obtain 
\begin{equation*}
	\cal E_2(\Omega)^{2^*}U(y)
	- 
	\int_{\bb R_+^n} \frac{V(x)^{\frac{n + 2}{n - 2}}}{\abs{x- y}^{n - 2}}\; \d x
	\leq
	C(n, \Omega)R^{-\frac 1 4} + \circ(1)R^2
\end{equation*}
as $q\to (2^*)^-$. Finally, given $\epsilon>0$ we first choose $R = R(\Omega, \epsilon)$ large and then choose $q = q(R,\epsilon)$ sufficiently close to $2^*$ to obtain 
\begin{equation*}
	\cal E_2(\Omega)^{2^*}U(y)
	\leq
	\int_{\bb R_+^n}\frac{V(x)^{\frac{n + 2}{n - 2}}}{\abs{x- y}^{n - 2}}\; \d x
	+ 
	\epsilon. 
\end{equation*}
\end{proof}
\begin{proof}[Proof of Theorem \ref{theorem:SupremumCriterion}]
For each $2< q< 2^*$ let $0\leq f_q$ be a continuous function satisfying both $\norm{f_q}_{L^{2(n-1)/n}(\bdy\Omega)} = 1$ and $\norm{E_2f_q}_{L^q(\Omega)} = \cal E_{2, q}(\Omega)$. Lemma \ref{lemma:SubcriticalExtremalsBounded} guarantees the existence of a $q$-independent constant $C>0$ such that $\norm{f_q}_{C^0(\bdy\Omega)}\leq C$ for all $2< q< 2^*$. Lemma \ref{lemma:RestrictionOfBoundedIsLipschitz} of the appendix now guarantees that $(f_q)_{2< q< 2^*}$ is uniformly equicontinuous. By the Arzaela Ascoli compactness criterion, there is a nonnegative function $f_*\in C^0(\bdy\Omega)$ and a subsequence of $q$ along which $f_q\to f_*$ uniformly on $\bdy\Omega$. Passing to this subsequence we also obtain both $\norm{f_*}_{L^{2(n-1)/n}(\bdy\Omega)} = 1$ and $E_2f_q \to E_2f_*$ uniformly on $\overline \Omega$. 
Using the elementary estimate
\begin{eqnarray*}
	\cal E_{2, q}(\Omega)
	& = & 
	\norm{E_2f_q}_{L^q(\Omega)}
	\\
	&\leq & 
	\abs{\Omega}^{\frac 1 q - \frac 1{2^*}}\norm{E_2 f_q - E_2 f_*}_{L^{2^*}(\Omega)} + \norm{E_2 f_*}_{L^q(\Omega)}, 
\end{eqnarray*}
letting $q\to (2^*)^-$ and using Lemma \ref{lemma:SubcriticalExtensionConstantConverge} gives $\cal E_2(\Omega) \leq\norm{E_2f_*}_{L^{2^*}(\Omega)}$. On the other hand, since $\norm{f_*}_{L^{2(n-1)/n}(\bdy\Omega)} = 1$ we obtain $\cal E_2(\Omega) \geq \norm{E_2f_*}_{L^{2^*}(\Omega)}$. 
\end{proof}
\subsection{A domain for which $\cal E_2(\Omega)> \cal E_2(B_1)$}
In this section we prove Theorem \ref{theorem:RieszKernelAnnulusExample} by direct computation. The computation is based on the following two equalities
\begin{equation}
\label{eq:HarmoncExtensionB1}
	\int_{\partial B_1} \frac{1}{|x - y|^{n - 2}} dS_y
	= 
	n\omega_n
	\qquad \text{ for all } x\in B_1
\end{equation}
and 
\begin{equation}
\label{eq:HarmonicExtensionOutsideBr}
	\int_{\partial B_r} \frac{1}{|x - y|^{n - 2}} dS_y
	= 
	\frac{n\omega_n r^{n - 1}}{|x|^{n - 2}}
	\qquad
	\text{ for } r< |x| < 1, 
\end{equation}
the proofs of which will be given at the end of this subsection. 
\begin{proof}[Proof of Theorem \ref{theorem:RieszKernelAnnulusExample}]
For a smooth bounded domain $\Omega \subset R^n$ we define 
\begin{equation*}
	C_2(\Omega)
	= 
	|\Omega|^{-\frac{n + 2}{2n}}|\partial\Omega|^{-\frac{n}{2(n - 1)}}
	\int_\Omega \int_{\partial \Omega} \frac{1}{|x - y|^{n - 2}} dS_y dx. 
\end{equation*}
Evidently $C_2(\Omega)\leq \cal E_2(\Omega)$. Moreover, using \eqref{eq:HarmoncExtensionB1} and the value of $\cal E_2(B_1)$ as computed in \cite{DZ1} we obtain  
\begin{equation*} 
	C_2(B_1) 
	= 
	\frac{n\omega_n^2}{\omega_n^{\frac{n + 2}{2n}}(n\omega_n)^{\frac{n}{2(n - 1)}}}
	=
	\cal E_2(B_1).
\end{equation*}
 Therefore, we only need to show that if $r$ is sufficiently small then 
\begin{equation}
\label{eq:AnnulusSupremumLarger}
	C_2(A_r) > C_2(B_1). 
\end{equation}
Using equations \eqref{eq:HarmoncExtensionB1} and \eqref{eq:HarmonicExtensionOutsideBr}, direction computation gives
\begin{equation*}
\begin{array}{lcl}
	\multicolumn{3}{l}{
	\ds
	\int_{A_r}\int_{\partial A_r}\frac{1}{|x - y|^{n - 2}}dS_y dx
	}\\
	& = & 
	\ds
	\int_{A_r}
	\left(
	\int_{\partial B_1}\frac{1}{|x - y|^{n - 2}} dS_y + \int_{\partial B_r}\frac{1}{|x - y|^{n - 2}}dS_y
	\right)dx 
	\\
	& = & 
	\ds
	n\omega_n\left(
	\omega_n(1 - r^n) + \frac{n\omega_n r^{n - 1}}{2}( 1- r^2)
	\right)
	\\
	& = & 
	\ds
	n\omega_n^2\left(1 + \frac{nr^{n - 1}}{2} +\circ(r^{n-1})\right)
\end{array}
\end{equation*}
On the other hand, using the elementary estimates
\begin{equation*}
	(1 - r^n)^{\frac{n + 2}{2n}} 
	=
	1 + \circ(r^{n-1})
	\qquad
	\text{ and }
	\qquad
	\left(1 + r^{n-1}\right)^{\frac{n}{2(n - 1)}}\leq 1 + \frac{n}{2(n-1)} r^{n - 1}
\end{equation*}
which hold for $0< r< 1$ we have
\begin{eqnarray*}
	C_2(A_r)
	& = & 
	\frac{ n\omega_n^2\left(1 + \frac{nr^{n - 1}}{2} - r^n - \frac{nr^{n + 1}}{2} \right)}
	{\left(\omega_n(1 - r^n)\right)^{\frac{n + 2}{2n}} \left(n\omega_n(1 + r^{n - 1})\right)^{\frac{n}{2(n - 1)}}}
	\\
	& \geq & 
	C_2(B_1)
	\frac{ 1 + \frac{nr^{n - 1}}{2}  +\circ(r^{n-1})}
	{1+ \frac{n}{2(n-1)}r^{n-1} + \circ(r^{n-1})}. 
\end{eqnarray*}
Since $n\geq 3$ we have $\frac n 2 > \frac{n}{2(n - 1)}$ and consequently \eqref{eq:AnnulusSupremumLarger} holds for $0< r$ sufficiently small. 
\end{proof}
\begin{proof}[Proofs of \eqref{eq:HarmoncExtensionB1} and \eqref{eq:HarmonicExtensionOutsideBr}]
To show \eqref{eq:HarmoncExtensionB1}, first note that by symmetry of $B_1$ we have $x\mapsto E_2(1)(x)$ is constant for $|x| = \frac 12$. Since $E_2(1)$ is harmonic in $B_{1/2}$ the maximum principle guarantees that $E_2(1)$ is constant on $\overline{B_{1/2}}$. In particular $E_2(1)(x) = E_2(1)(0) = n\omega_n$ for $|x|\leq \frac 12$. By analytic continuation $E_2(1)(x) = n\omega_n$ for $|x|< 1$.  

To show \eqref{eq:HarmonicExtensionOutsideBr}, let 
\begin{equation*}
	u(x) = \int_{\partial B_r}\frac{1}{|x - y|^{n - 2}}\; dS_y
	\qquad
	\text{ for } |x|>r. 
\end{equation*}
By symmetry of $\bdy B_r$, $u$ is radially symmetric. Moreover, the Dominated Convergence Theorem guarantees that 
\begin{equation}
\label{eq:uDecayAtInfinity}
	|x|^{n - 2} u(x) \to n\omega_nr^{n - 1}
	\qquad
	\text{ as } |x|\to\infty. 
\end{equation}
The function 
\begin{equation*}
	v(z)
	= 
	\left(\frac{r}{|z|}\right)^{n - 2} u\left(\frac{r^2z}{|z|^2}\right)
	\qquad
	z\in B_r\setminus \{0\}
\end{equation*}
is radially symmetric and satisfies $\Delta v = 0$ in $B_r\setminus\{0\}$. Moreover, equation \eqref{eq:uDecayAtInfinity} gives 
\begin{equation}
\label{eq:v(0)}
	\lim_{|z|\to 0} v(z) = n\omega_n r. 
\end{equation}
In particular $|z|^{n - 2}v(z)\to 0$ as $|z|\to 0$ so the removable singularity theorem for harmonic functions guarantees that $v$ may be extended to a harmonic function on $B_r$. We continue to use $v$ to denote this extension. Since $v$ is radially symmetric, the restriction of $v$ to $\partial B_{r/2}$ is constant. Therefore, the maximum principle and equation \eqref{eq:v(0)} guarantee that $v\big|_{B_{r/2}} = v(0) = n\omega_n r$. By analytic continuation we get $v(z) = n\omega_n r$ for all $z\in B_r$. Equation \eqref{eq:HarmonicExtensionOutsideBr} now follows from the definition of $v$. 
\end{proof}
%
\section{Supremum for $P_2$ extension operator and its geometric implication}
\label{section:SupremumForPoissonExtension}
%
Let $g_0$ denote the Euclidean metric. If a metric $g$ on $\Omega$ is conformally equivalent to $g_0$ and has identically vanishing scalar curvature $R_g$ then there is a smooth, positive, harmonic function $u$ on $\Omega$ for which $g = u^{\frac{4}{n - 2}}g_0$. Letting $f = u\big|_{\bdy\Omega}$ we have $u = P_2 f$, where $P_2$ is the Poisson kernel-based extension operator. For such $g$, the isoperimetric constant of $(\Omega, g)$ is 
\begin{equation*}
	I(\Omega, g)
	= 
	\frac{\abs{\Omega}_g^{\frac 1 n}}{\abs{\bdy\Omega}_g^{\frac1{n - 1}}}
	= 
	\frac{\norm{P_2f}_{L^{2^*}(\Omega)}^{\frac 2{n - 2}}}
	{\norm{f}_{L^{2(n-1)/(n-2)}(\bdy\Omega)}^{\frac{2}{n - 2}}}. 
\end{equation*}
By approximation, $\Theta_2(\Omega)$ as defined in \eqref{poisson-1} satisfies
\begin{eqnarray*}
	\Theta_2(\Omega)
	& = & 
	\sup\left\{ \frac{\norm{P_2f}_{L^{2^*}(\Omega)}}
	{\norm{f}_{L^{2(n-1)/(n-2)}(\bdy\Omega)}}: 
	f\in L^{2(n-1)/(n-2)}(\bdy\Omega)\setminus\{0\} \right\}
	\\
	& = & 
	\sup\left\{ \frac{\norm{P_2f}_{L^{2^*}(\Omega)}}
	{\norm{f}_{L^{2(n-1)/(n-2)}(\bdy\Omega)}}: 
	0 < f\in C^\infty(\Omega)\right\}
	\\
	& = & 
	\sup\left\{ I(\Omega, g)^{\frac{n - 2}{2}}: g\in [g_0] \text{ and } R_g = 0\right\}. 
\end{eqnarray*}
In this section we will prove Theorem \ref{theorem:PoissonKernelAnnulusExample}. As a consequence of this theorem and the above discussion, we deduce that if $0< r< 1$ is sufficiently small then there is a scalar flat metric $g$ in the conformal class of $g_0$ for which $I(B_1\setminus B_r, g)$ is maximal among all such metrics. 
\begin{proof}[Proof of Theorem \ref{theorem:PoissonKernelAnnulusExample}]
By Theorem 1.1 of \cite{HWY2009} it suffices to show that if $0< r< 1$ is sufficiently small then $\Theta_1(B_1)< \Theta_1(A_r)$. For $0<r< 1$ and $a>1$ define 
\begin{equation*}
	f(y) = 
	\begin{cases}
	1 & \text{ if } y\in \partial B_1\\
	a & \text{ if } y\in \partial B_r. 
	\end{cases}
\end{equation*}
The harmonic extension of $f$ to $A_r$ is 
\begin{equation}
\label{eq:AnnulusHarmonicDirichletExtension}
	P_2f(x)
	= 
	c_1|x|^{2-n} +c_2
	\qquad
	\text{ for } r< |x|< 1, 
\end{equation}
where 
\begin{equation*}
	c_1 = 
	\frac{r^{n - 2}(a - 1)}{1 - r^{n - 2}}
	\qquad
	\text{ and }
	\qquad
	c_2 = \frac{1 - ar^{n - 2}}{1 - r^{n - 2}}. 
\end{equation*}	
We'll show that if $r$ is sufficiently small then 
\begin{equation*}
	\Theta_2(B_1)
	< 
	\frac{\left\| P_2f\right\|_{L^{\frac{2n}{n - 2}}(A_r)}}
	{\|f\|_{L^{\frac{2(n - 1)}{n - 2}}(\partial A_r)}}. 
\end{equation*}
By Lebesgue duality it is sufficient to show that 
\begin{equation}
\label{eq:AnnulusTestQuotient}
	\Theta_2(B_1)
	<
	\frac{\displaystyle \int_{A_r} P_2f(x) dx}
	{\abs{A_r}^{\frac{n + 2}{2n}} \norm{f}_{L^{\frac{2(n -1)}{n - 2}}(\partial A_r)}}. 
\end{equation}
By direct computation we have
\begin{equation*}
	\|f\|_{L^{\frac{2(n -1)}{n - 2}}(\partial A_r)}
	=
	\left(n\omega_n\left(1 + a^{\frac{2(n -1)}{n - 2}}r^{n - 1}\right)\right)^{\frac{n - 2}{2(n - 1)}}. 
\end{equation*}
Moreover, using \eqref{eq:AnnulusHarmonicDirichletExtension} and computing directly gives
\begin{eqnarray*}
	\int_{A_r}P_2f(x)dx
	&= & 
	\omega_n\left(\frac n 2 c_1(1 - r^2) + c_2(1 - r^n)\right)
	\\
	& = & 
	\frac{\omega_n}{1 - r^{n - 2}} \left(1 + \left(\frac n 2(a- 1) - a\right)r^{n - 2} + \circ(r^{n - 2})\right), 
\end{eqnarray*}
where $\circ(r^{n - 2})$ denotes any function $h(r)$ for which $r^{2-n}|h(r)|\to 0$ as $r\to 0$. Using the above computations together with the elementary estimates
\begin{equation*}
	\left( 1 - r^n\right)^{\frac{n + 2}{2n}} 
	= 1 + \circ(r^{n - 2})
\end{equation*}
and
\begin{equation*}
	\left( 1 + a^{\frac{2(n - 1)}{n - 2}}r^{n - 1}\right)^{\frac{n - 2}{2(n - 1)}}
	= 1 + \circ(r^{n - 2})
\end{equation*}
the quotient on the right-hand side of \eqref{eq:AnnulusTestQuotient} is estimated as follows 
\begin{equation*}
\begin{array}{lcl}
	\multicolumn{3}{l}{
	\ds
	\frac{\displaystyle \int_{A_r} P_2f(x) dx}
	{\abs{A_r}^{\frac{n + 2}{2n}} \|f\|_{L^{\frac{2(n -1)}{n - 2}}(\partial A_r)}}
	}\\
	& = & 
	\ds
	\frac{\omega_n^{\frac{n-2}{2n}}}{(n\omega_n)^{\frac{n - 2}{2(n - 1)}}}
	\cdot
	\frac{1  + \left(\frac n 2(a - 1) - a\right)r^{n - 2} + \circ(r^{n - 2})}
	{\left( 1 - r^{n - 2}\right)\left(1 - r^n\right)^{\frac{n + 2}{2n}}\left(1 + a^{\frac{2(n - 1)}{n - 2}}r^{n - 1}\right)^{\frac{n - 2}{2(n - 1)}}}
	\\
	& \geq & 
	\ds
	\Theta_2(B_1)
	\frac{1  + \left(\frac n 2(a - 1) - a\right)r^{n - 2} + \circ(r^{n - 2})}
	{1 - r^{n - 2} +\circ(r^{n - 2})}. 
\end{array}
\end{equation*}
The assumption $a>1$ guarantees that $\frac n 2(a - 1) - a>-1$ so 
\begin{equation*}
	\frac{1  + \left(\frac n 2(a - 1) - a\right)r^{n - 2} + \circ(r^{n - 2})}
	{1 - r^{n - 2} +\circ(r^{n - 2})}
	> 
	1
\end{equation*}
whenever $0< r$ is sufficiently small. Inequality \eqref{eq:AnnulusTestQuotient} follows immediately. 
\end{proof}
\section{Appendix: Regularity}
\label{section:Regularity}
In this section we collect some regularity results, the proofs of which follow from standard arguments. 
\begin{lem}
\label{lemma:MoserIteration}
If $u\in L^{2(n - 1)/(n - 2)}(\bdy\Omega)$ and $v\in L^{2^*}(\Omega)$ satisfy 
\begin{equation}
\label{eq:LinearSystem}
	\begin{cases}
	\displaystyle
		u(y)  = \int_\Omega \frac{a(x)v(x)}{\abs{x - y}^{n - 2}}\; \d x
		&
		y\in \bdy \Omega
		\\
		\displaystyle
		v(x) = \int_{\bdy\Omega}\frac{b(y)u(y)}{\abs{x- y}^{n - 2}}\; \d S_y
		& x\in \Omega
	\end{cases}
\end{equation}
where $a\in L^\sigma(\Omega)$ for some $\sigma > \frac n 2$ and $b\in L^\tau(\bdy\Omega)$ for some $\tau > n -1$ then $u\in L^\infty(\bdy\Omega)$ and $v\in L^\infty(\Omega)$. 
\end{lem}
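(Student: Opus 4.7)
The plan is a Moser-type bootstrap applied to the system \eqref{eq:LinearSystem}, which I would rewrite as $u = R_2(av)$ and $v = E_2(bu)$. The two mapping properties I will use repeatedly are Lemma \ref{lemma:FattenedOmegaExtensionBound} (with $\alpha=2$, $\delta=0$), which gives $E_2\colon L^p(\partial\Omega)\to L^q(\Omega)$ bounded for $1<p<n-1$ with $\frac 1q = \frac{n-1}{n}\bigl(\frac 1p - \frac{1}{n-1}\bigr)$, and Corollary \ref{crl:OmegaRestrictionInequality}(b), which gives $R_2\colon L^t(\Omega)\to L^r(\partial\Omega)$ bounded for $1<t<n/2$ with $\frac 1r = \frac{n}{n-1}\bigl(\frac 1t - \frac 2n\bigr)$. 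H\"older's inequality with $a\in L^\sigma(\Omega)$ and $b\in L^\tau(\partial\Omega)$ converts integrability of $u$ and $v$ into integrability of $bu$ and $av$ via $\frac{1}{p'} = \frac 1p + \frac 1\tau$ and $\frac{1}{t'} = \frac 1t + \frac 1\sigma$.

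Starting from the given regularity $u\in L^{2(n-1)/(n-2)}(\partial\Omega)$, I would iterate the composed map $u\mapsto R_2\bigl(a\,E_2(bu)\bigr)$. Letting $\alpha_k$ denote the reciprocal of the Lebesgue exponent of $u$ at iteration $k$, a direct substitution gives the arithmetic recursion
\[
	\alpha_{k+1} = \alpha_k - \epsilon,
	\qquad
	\epsilon := \frac{3}{n-1} - \frac{1}{\tau} - \frac{n}{(n-1)\sigma}.
\]
The hypotheses $\tau > n-1$ and $\sigma > n/2$ force $\frac 1\tau < \frac 1{n-1}$ and $\frac{n}{(n-1)\sigma} < \frac 2{n-1}$, so $\epsilon > 0$, and every full iteration produces a strict gain in integrability.

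After finitely many iterations one of the intermediate exponents will fall below a critical threshold. Once $av \in L^p(\Omega)$ for some $p > n/2$, H\"older's inequality applied pointwise in the first line of \eqref{eq:LinearSystem} gives
\[
	\abs{u(y)} \leq \norm{av}_{L^p(\Omega)}\left(\int_\Omega \abs{x-y}^{-(n-2)p'}\,\d x\right)^{1/p'},
\]
and the integral is bounded uniformly in $y\in\partial\Omega$ because $(n-2)p'<n$; this yields $u\in L^\infty(\partial\Omega)$. Feeding $bu \in L^\tau(\partial\Omega)$ with $\tau>n-1$ into the second line of \eqref{eq:LinearSystem} and applying H\"older again gives $v\in L^\infty(\Omega)$, since the boundary integral $\int_{\partial\Omega}\abs{x-y}^{-(n-2)\tau'}\,\d S_y$ is finite uniformly in $x\in\Omega$ because $(n-2)\tau'<n-1$. (The symmetric threshold $bu\in L^p$ with $p>n-1$ gives $v\in L^\infty$ directly, after which the same Hölder trick applied to the first equation gives $u\in L^\infty$.)

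The main obstacle I anticipate is bookkeeping: at every step of the iteration I must verify that the intermediate exponents for $bu$ and $av$ lie in the admissible ranges $(1,n-1)$ and $(1,n/2)$ demanded by the two boundedness results, and I must stop the iteration the first time one of the two $L^\infty$ thresholds is met rather than continuing past it. The positivity of $\epsilon$ guarantees termination after an explicit finite number of steps depending only on $n$, $\sigma$, $\tau$ and the initial exponents.
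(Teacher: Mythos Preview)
Your proposal is correct and is precisely the standard Moser-iteration argument the paper has in mind; the appendix states Lemma~\ref{lemma:MoserIteration} without proof, noting only that it follows from standard arguments. Your recursion $\alpha_{k+1}=\alpha_k-\epsilon$ with $\epsilon=\frac{3}{n-1}-\frac 1\tau-\frac{n}{(n-1)\sigma}>0$ is computed correctly, the termination thresholds $av\in L^{>n/2}$ and $bu\in L^{>n-1}$ are the right ones, and the bookkeeping concern you flag (admissible ranges, stopping at the first threshold, avoiding the endpoint exponents by slightly lowering $\sigma$ or $\tau$) is exactly the care that needs to be taken.
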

\begin{lem}
\label{lemma:RestrictionOfBoundedIsLipschitz}
Let $\Omega \subset \bb R^n$ be a smooth bounded domain. The restriction operator $R_2$ given in \eqref{eq:AlphaRestrictionOperatorBoundedDomain} maps $L^\infty(\Omega)$ into $C^{0,1}(\bdy\Omega)$ and there is a constant $C = C(n, \Omega)>0$ such that for every $g\in L^\infty(\Omega)$, 
\begin{equation*}
	\abs{R_2g(y) - R_2g(z)}
	\leq
	C\norm{g}_{L^\infty(\Omega)}\abs{y - z}
\end{equation*}
for all $y, z\in \bdy \Omega$. 
\end{lem}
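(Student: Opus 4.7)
The plan is to reduce the Lipschitz bound for $R_2 g$ to a purely geometric/integrability estimate on the kernel $K(x,w) := |x - w|^{2-n}$ on $\Omega$ that is independent of $g$. Specifically, for $y, z \in \bdy\Omega$ one has
\begin{equation*}
	|R_2 g(y) - R_2 g(z)|
	\leq
	\|g\|_{L^\infty(\Omega)} \int_\Omega \bigl| |x - y|^{2-n} - |x - z|^{2-n} \bigr|\; \d x,
\end{equation*}
so it suffices to show that
\begin{equation*}
	\Phi(y,z) := \int_\Omega \bigl| |x - y|^{2-n} - |x - z|^{2-n} \bigr|\; \d x
	\leq
	C(n, \Omega)\, |y - z|.
\end{equation*}

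The main step is a standard splitting based on the scale $r := |y - z|$. I would decompose $\Omega = (\Omega \cap B_{2r}(y)) \cup (\Omega \setminus B_{2r}(y))$ and estimate each piece as follows. On the near region $\Omega \cap B_{2r}(y)$, I bound the two terms separately; since $\Omega \cap B_{2r}(y) \subset B_{2r}(y)$ and $\Omega \cap B_{2r}(y) \subset B_{3r}(z)$, the elementary spherical calculation
\begin{equation*}
	\int_{B_R(w)} |x - w|^{2-n}\; \d x = \frac{n \omega_n R^2}{2}
\end{equation*}
yields $\int_{\Omega \cap B_{2r}(y)} \bigl( |x-y|^{2-n} + |x-z|^{2-n} \bigr)\; \d x \leq C(n)\, r^2$. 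On the far region $\Omega \setminus B_{2r}(y)$, the whole segment from $y$ to $z$ lies outside $B_r(x)$, so the Mean Value Theorem applied to $w \mapsto |x - w|^{2-n}$ gives
\begin{equation*}
	\bigl| |x - y|^{2-n} - |x - z|^{2-n}\bigr|
	\leq
	C(n)\, r\, |x - y|^{1-n},
\end{equation*}
after which the integrability $\int_{\Omega \setminus B_{2r}(y)} |x - y|^{1-n}\, \d x \leq \int_{B_{\mathrm{diam}(\Omega)}(y)} |x - y|^{1-n}\, \d x = \omega_{n-1} \cdot \mathrm{diam}(\Omega)$ finishes the far part. Combining the two estimates and using $r \leq \mathrm{diam}(\Omega)$ to absorb the $r^2$ term into an $r$ term yields $\Phi(y, z) \leq C(n, \Omega)\, r$.

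The argument requires no smoothness of $\bdy\Omega$ beyond boundedness of $\Omega$ (so the hypothesis on $\Omega$ is used only to guarantee $\mathrm{diam}(\Omega)<\infty$). I do not anticipate a genuine obstacle: the only minor subtlety is checking the constant in the Mean Value Theorem estimate, which follows because on $\Omega \setminus B_{2r}(y)$ we have $|x - w| \geq r$ along the segment $w = ty + (1-t)z$, so $\nabla_w |x - w|^{2-n}$ is controlled by $C(n) |x - y|^{1-n}$ uniformly in $t \in [0,1]$. The resulting constant $C(n, \Omega)$ depends only on $n$ and $\mathrm{diam}(\Omega)$.
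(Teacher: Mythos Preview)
Your argument is correct and is precisely the standard near/far splitting one expects here. The paper itself does not supply a proof of this lemma (Section~\ref{section:Regularity} merely states that the regularity lemmas ``follow from standard arguments''), so there is nothing to compare against; your write-up would serve as a suitable proof. One cosmetic remark: in the far-region integral you wrote $\omega_{n-1}\cdot\mathrm{diam}(\Omega)$, whereas in the paper's notation (with $\omega_n$ the volume of the unit ball) the constant is $n\omega_n\cdot\mathrm{diam}(\Omega)$; this does not affect the argument.
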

%
%
\begin{lem}
\label{lemma:BoundaryFunctionC1}
Let $\Omega \subset \bb R^n$ be a smooth bounded domain. The restriction operator $R_2$ given in \eqref{eq:AlphaRestrictionOperatorBoundedDomain} maps $L^\infty(\Omega)$ into $C^1(\bdy\Omega)$. 
\end{lem}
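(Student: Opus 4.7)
The natural candidate for the gradient of $R_2 g$ is the formally differentiated expression
\begin{equation*}
	G(y) \;=\; (n-2)\int_\Omega \frac{(x-y)\, g(x)}{\abs{x-y}^n}\, \d x,
	\qquad y\in \bdy\Omega,
\end{equation*}
which is an $\bb R^n$-valued integral with integrand pointwise bounded by $(n-2)\norm{g}_{L^\infty(\Omega)}\abs{x-y}^{-(n-1)}$. Since $n-1<n$, this kernel is integrable over $\Omega$ uniformly in $y$ in any bounded set, so $G$ is well defined on $\bdy\Omega$ with $\norm{G}_{L^\infty(\bdy\Omega)}\leq C(n,\Omega)\norm{g}_{L^\infty(\Omega)}$. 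My plan is to show (i) that $G$ is continuous on $\bdy\Omega$, and (ii) that for every smooth curve $\gamma$ in $\bdy\Omega$ the composition $R_2g\circ\gamma$ is $C^1$ with derivative $G(\gamma(t))\cdot \gamma'(t)$; together these give $R_2 g\in C^1(\bdy\Omega)$.

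\textbf{Continuity of $G$.} For $y,z\in\bdy\Omega$ set $r=2\abs{y-z}$ and split $\Omega$ into $\Omega\cap B_r(y)$ and $\Omega\setminus B_r(y)$. On the near piece, bound each term in $G(y)-G(z)$ separately by $\norm{g}_{L^\infty}$ times $\abs{x-y}^{-(n-1)}$ (or $\abs{x-z}^{-(n-1)}$), whose integral over $B_r(y)$ is $O(r)$. On the far piece, apply the Mean Value Theorem to $w\mapsto (x-w)\abs{x-w}^{-n}$ to obtain $\abs{x-y}^{-n}\abs{y-z}$ up to a constant, and integrate: $\abs{y-z}\int_{\Omega\setminus B_r(y)}\abs{x-y}^{-n}\d x\leq C\abs{y-z}\log(\mathrm{diam}(\Omega)/r)=O\bigl(r\log(1/r)\bigr)$. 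Both pieces tend to $0$ as $z\to y$, giving $G\in C^0(\bdy\Omega)$.

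\textbf{Directional derivatives along $\bdy\Omega$.} Fix $y^0\in\bdy\Omega$ and a smooth curve $\gamma:(-\delta,\delta)\to\bdy\Omega$ with $\gamma(0)=y^0$. For any $h$ small, write
\begin{equation*}
	R_2 g(\gamma(h)) - R_2 g(\gamma(0))
	=
	\int_\Omega g(x)\int_0^h \frac{d}{ds}\abs{x-\gamma(s)}^{-(n-2)}\,\d s\,\d x.
\end{equation*}
The inner derivative equals $(n-2)\abs{x-\gamma(s)}^{-n}(x-\gamma(s))\cdot \gamma'(s)$ in absolute value bounded by $C\norm{g}_{L^\infty}\abs{x-\gamma(s)}^{-(n-1)}\norm{\gamma'}_{L^\infty}$. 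Since this bound is integrable over $[0,h]\times \Omega$ (using again $n-1<n$ and that $\gamma$ stays in a compact set), Fubini applies and gives
\begin{equation*}
	R_2 g(\gamma(h)) - R_2 g(\gamma(0))
	=
	\int_0^h G(\gamma(s))\cdot \gamma'(s)\,\d s.
\end{equation*}
By step (i), the integrand is continuous in $s$, so dividing by $h$ and letting $h\to 0$ yields $\tfrac{d}{dh}\big|_{h=0}R_2 g(\gamma(h)) = G(y^0)\cdot \gamma'(0)$. Thus the tangential derivatives of $R_2 g$ at $y^0$ exist and depend linearly and continuously on the tangent vector, with continuous dependence on $y^0$ by step (i). In any smooth boundary chart this shows $R_2 g$ has continuous partial derivatives, hence $R_2 g\in C^1(\bdy\Omega)$.

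\textbf{Main obstacle.} The only nontrivial step is (i), the continuity of the weakly singular integral $G$; beyond this, the differentiation is standard once Fubini is justified. The near/far splitting above is borderline only through the logarithmic factor on the far piece, but since it is multiplied by $r=2\abs{y-z}$, it still vanishes as $z\to y$, so no deeper tool (e.g.\ Calder\'on--Zygmund theory) is needed.
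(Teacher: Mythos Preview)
Your argument is correct and is precisely the kind of standard computation the paper has in mind: in the appendix the authors state this lemma without proof, writing only that the regularity results ``follow from standard arguments.'' Your near/far splitting for the continuity of $G$ and the Fubini-based justification of differentiation under the integral sign are exactly such a standard route, so there is nothing to compare.

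One small remark: in the far-piece estimate the Mean Value Theorem gives $\abs{x-w^*}^{-n}$ for some $w^*$ on the segment $[y,z]$, not literally $\abs{x-y}^{-n}$; since $\abs{y-z}=r/2$ and $\abs{x-y}>r$ one has $\abs{x-w^*}\geq \tfrac12\abs{x-y}$, so the substitution costs only a dimensional constant. With that clarified the proof is complete.
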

%
%
\begin{lem}
\label{lemma:ExtensionHolderEstimate}
If $f\in L^\infty(\bdy \Omega)$ then for every $0< \beta<1$, $E_2 f\in C^{0, \beta}(\overline\Omega)$ and there is a constant $C = C(n, \Omega, \beta)$ such that for all $x, z\in \overline\Omega$
\begin{equation*}
	\abs{E_2 f(x) - E_2 f(z)}
	\leq C\norm{f}_{L^\infty(\bdy\Omega)}\abs{x-z}^\beta. 
\end{equation*}
\end{lem}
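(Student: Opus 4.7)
The plan is to fix $x, z \in \overline\Omega$, set $\delta = |x - z|$, and estimate the pointwise difference
\[
	|E_2 f(x) - E_2 f(z)|
	\leq
	\|f\|_{L^\infty(\bdy\Omega)}
	\int_{\bdy\Omega} \bigl||x - y|^{2 - n} - |z - y|^{2 - n}\bigr| \, dS_y
\]
by splitting the integral at the scale $\delta$. Let $A = \bdy\Omega \cap B_{2\delta}(x)$ denote the near piece and $B = \bdy\Omega \setminus B_{2\delta}(x)$ the far piece. The main obstacle is handling the two regions in a way that produces a single exponent, which will force the Hölder exponent to stop strictly short of $1$ due to a logarithmic blow-up coming from the far region.

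For the near piece, I estimate each of $|x - y|^{2 - n}$ and $|z - y|^{2 - n}$ separately using the elementary surface estimate
\[
	\int_{\bdy\Omega \cap B_r(p)} |p - y|^{2 - n} \, dS_y \leq C(n, \Omega) \, r,
\]
valid for any $p \in \bb R^n$ and any $r > 0$, which follows from the smoothness of $\bdy\Omega$ (so that $\bdy\Omega$ is locally an $(n-1)$-dimensional graph on which one integrates $r^{2-n}$ in geodesic polar coordinates). Since $A \subset \bdy\Omega \cap B_{3\delta}(z)$ by the triangle inequality, both contributions are bounded by $C\delta$.

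For the far piece, I apply the Mean Value Theorem to $t \mapsto |tx + (1 - t)z - y|^{2 - n}$; when $|x - y| > 2\delta$, the segment from $x$ to $z$ stays at distance at least $|x - y|/2$ from $y$, so
\[
	\bigl||x - y|^{2 - n} - |z - y|^{2 - n}\bigr| \leq C(n) \, \delta \, |x - y|^{1 - n}.
\]
Integrating against surface measure on $\bdy\Omega$ outside $B_{2\delta}(x)$ and using the local graph structure yields
\[
	\int_B |x - y|^{1 - n} \, dS_y \leq C(n, \Omega) \int_{2\delta}^{\operatorname{diam}(\Omega)} r^{- 1} \, dr \leq C(n, \Omega)\, |\log \delta|,
\]
so the far contribution is at most $C \delta |\log \delta|$.

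Combining the two pieces gives $|E_2 f(x) - E_2 f(z)| \leq C \|f\|_{L^\infty(\bdy\Omega)} \, \delta (1 + |\log \delta|)$ whenever $\delta \leq \tfrac12 \operatorname{diam}(\Omega)$, say. Since $\Omega$ is bounded and since for any $\beta \in (0, 1)$ the elementary inequality $\delta(1 + |\log \delta|) \leq C_\beta \delta^\beta$ holds uniformly for $\delta$ in a bounded range, the desired Hölder bound follows in the small-$\delta$ regime. For $\delta$ comparable to $\operatorname{diam}(\Omega)$, one instead uses the trivial pointwise bound $|E_2 f(x)| \leq \|f\|_{L^\infty(\bdy\Omega)} \sup_{p \in \overline\Omega}\int_{\bdy\Omega} |p - y|^{2 - n}\, dS_y \leq C(n,\Omega) \|f\|_{L^\infty(\bdy\Omega)}$, which is itself dominated by $C_\beta \|f\|_{L^\infty(\bdy\Omega)} \delta^\beta$ since $\delta$ is bounded below. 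The constants merge to give a single $C = C(n, \Omega, \beta)$.
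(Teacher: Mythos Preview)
Your argument is correct and is precisely the standard near--far splitting one would expect; the paper itself does not supply a proof for this lemma, stating only that it ``follows from standard arguments.'' Your decomposition at scale $\delta=|x-z|$, the surface estimate $\int_{\bdy\Omega\cap B_r(p)}|p-y|^{2-n}\,dS_y\le C(n,\Omega)r$ for the near piece, and the mean-value bound yielding $C\delta|\log\delta|$ on the far piece together give the log-Lipschitz estimate $|E_2f(x)-E_2f(z)|\le C\|f\|_{L^\infty}\delta(1+|\log\delta|)$, from which the $C^{0,\beta}$ bound for any $\beta<1$ follows exactly as you indicate.
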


\end{document}